\newtheoremstyle{mine}
{\baselineskip}
{\baselineskip}
{\itshape}
{
}
{\bfseries}
{.}
{.5em}
{#1 #2\ifx#3\relax\else~(#3)\fi}
\theoremstyle{mine}
\newtheorem{theorem}{Theorem}
\numberwithin{theorem}{section}
\newtheorem{corollary}[theorem]{Corollary}
\newtheorem{proposition}[theorem]{Proposition}
\newtheorem{lemma}[theorem]{Lemma}
\newtheorem{claim}[theorem]{Claim}
\newtheorem{definition}[theorem]{Definition}
\numberwithin{equation}{section}
\newtheorem{remark}[theorem]{Remark}
\colorlet{shadecolor}{blue!10}
\newcommand{\margin}[1]{\textcolor{magenta}{*}\marginpar[\textcolor{magenta} {  \raggedleft  \footnotesize  #1 }  ]{ \textcolor{magenta} { \raggedright  \footnotesize  #1 }  }}
\let\qed=\QED
\renewcommand{\epsilon}{\varepsilon}
\newcommand{\R}{\mathbb{R}}
\newcommand{\Z}{\mathbb{Z}}
\newcommand{\N}{\mathbb{N}}
\newcommand{\G}{\mathcal{G}}
\renewcommand{\S}{\mathbb{S}}
\newcommand{\1}{\mathbf 1}
\renewcommand{\d}{\mathbf d}
\newcommand{\avelio}[1]{#1}
\newcommand{\proj}[2]{\pi_{#1 \shortrightarrow #2}}
\newcommand{\tproj}[2]{\tilde \pi_{#1 \shortrightarrow #2}}
\newcommand{\projc}[2]{\mathring \pi_{#1 \shortrightarrow #2}}
\newcommand{\mm}{m}
\newcommand{\qq}{q} 
\newcommand{\cons}{C_{1\shortrightarrow 2}}
\def\calL{\mathcal{L}}
\def\var{\operatorname{Var}}
\def\P{\mathbb{P}} 
\def\E{\mathbb{E}} 
\def\md{\mid}
\def \eps {\epsilon}
\def\Bb#1#2{{\def\md{\bigm| }#1\bigl[#2\bigr]}}
\def\Eb{\Bb\E}
\def\FK#1#2#3{{\def\md{\bigm| } \P_{#1}^{\,#2}  \bigl[  #3 \bigr]}}
\def\EFK#1#2#3{{\def\md{\bigm| } \E_{#1}^{\,#2}  \bigl[  #3 \bigr]}}
\def \p {{\partial}}
\def\<#1{\langle #1\rangle}
\newcommand{\red}[1]{{\color{red}#1}}
\newcommand{\purple}[1]{{\color{purple}#1}}
\definecolor{darkgreen}{rgb}{0,0.6,0.05}
\newcommand{\dgreen}[1]{{\color{darkgreen}#1}}
\def\bi{\begin{itemize}}  
	\def\ei{\end{itemize}}
\def\bnum{\begin{enumerate}} 
	\def\enum{\end{enumerate}}
\def\bf{\bfseries}
\def\GFF{\mathrm{GFF}}
\def\GFF{\mathrm{GFF}}
\newcommand{\Vil}{\mathrm{Vil}}
\newcommand{\ccirc}{\mathbin{\mathchoice
		{\xcirc\scriptstyle}
		{\xcirc\scriptstyle}
		{\xcirc\scriptscriptstyle}
		{\xcirc\scriptscriptstyle}
}}
\newcommand{\xcirc}[1]{\vcenter{\hbox{$#1\circ$}}}
\title[]
{
Improved spin-wave estimate for Wilson loops in $U(1)$ lattice gauge theory}
\author{Christophe Garban}
\author{Avelio Sepúlveda}
\address
{Université Claude Bernard Lyon 1, CNRS UMR 5208, Institut Camille Jordan, 69622 Villeurbanne, France \, and Institut Universitaire de France (IUF)}
\email{garban@math.univ-lyon1.fr}
\address{Universidad de Chile,  Departamento de Ingeniería Matemática and Centro de Modelamiento Matemático (AFB170001), UMI-CNRS 2807, Beauchef 851, Santiago, Chile.}
\email{lsepulveda@dim.uchile.cl}
\begin{document}

\maketitle

\begin{abstract}
In this paper, 
we obtain bounds on the Wilson loop expectations in 4D $U(1)$ lattice gauge theory which quantify the effect of topological defects. In the case of a Villain interaction, by extending the non-perturbative technique introduced in \cite{GS2}, we obtain the following  estimate for a large loop $\gamma$ at low temperatures:
\[
|\<{W_\gamma}_{\beta}| \leq \exp \left(-\frac{C_{GFF}} {2\beta}(1+C \beta e^{- 2\pi^2 \beta} )(|\gamma|+o(|\gamma|)) \right)\,.
\]
Our result is in the line of recent works \cite{Sourav,cao,malin,forsstrom2021decay} which analyze the case where the gauge group is discrete. 
In the present case where the gauge group is continuous and Abelian, the fluctuations of the gauge field decouple into a Gaussian part, related to the so-called {\em free electromagnetic wave} \cite{gross1983convergence,driver1987convergence}, and a gas of {\em topological defects}. As such, our work gives new quantitative bounds on the fluctuations of the latter which complement the works by Guth and Fröhlich-Spencer \cite{guth1980,FSrestoration}.

Finally, we improve, also in a non-perturbative way, the correction term from $e^{-2\pi^2\beta}$ to $e^{-\pi^2\beta}$ in the case of the free-energy of the system. This provides a matching lower-bound with the prediction of Guth \cite{guth1980} based on renormalization group techniques.

\end{abstract}


\section{Introduction}

\subsection{Context.} 
$U(1)_4$ lattice Gauge theory is the statistical physics model on $\Z^4$ with $U(1)$ gauge symmetry which is relevant to the study of quantum-electrodynamics. In this paper (as in \cite{GS2}), we will focus on the Villain-version of the $U(1)$ lattice gauge theory and we will stick to the case of {\em pure} gauge theory (i.e. without coupled matter). 
It is defined as follows 
on a finite box $\Lambda \subset \Z^4$.
(See Definition \ref{d.Villain_final} for a more complete definition which includes the case of Dirichlet boundary conditions and Proposition \ref{pr.IVL} for its infinite-volume limit on $\Z^4$).


\begin{definition}[Villain $U(1)$-lattice gauge theory]\label{d.Villain}  Let $\Lambda\subseteq \Z^4$ be a finite box. Let $\vec E(\Lambda)$ denote the oriented edges of $\Lambda$ and $F(\Lambda)$ denote the faces (or plaquettes) of $\Lambda$. The Villain  $U(1)$-lattice gauge theory 
with free boundary conditions corresponds to the probability measure $\P_\beta^{Vil}$  on 
 \begin{align*}
 C^1_{\S^1}:=\{ \theta \in [-\pi, \pi)^{\vec E(\Lambda)}, \text{ s.t. } \theta(e)=-\theta(e^{-1}), \, \forall e \in \vec E(\Lambda)\}
 \end{align*} whose Radon-Nikodym derivative w.r.t the Lebesgue measure $d\theta$ on $C^1_{\S^1}$ is given by  
\begin{align}\label{e.Villain_mar}
\FK{\beta}{Vil}{d\theta} \propto  
 \prod_{f\in F(\Lambda)}  \sum_{m\in \Z} \exp\left( -\frac \beta 2  \left (2\pi m +\sum_{e\in f}\theta(e) \right )^2 \right)d\theta\,,
\end{align}
where for each $f\in F(\Lambda)$, one fixes an arbitrary orientation of $f$ and the sum $\sum_{e\in f}$ is over its corresponding oriented edges.
\end{definition} 
In this paper, the symbol $\propto$ stands for ``proportional to'' and is used throughout this text in order to avoid writing down the renormalization constant \[Z_{\beta,\Lambda}^{Vil}:=\int_{C^1_{\S^1}} \prod_{f\in F(\Lambda)}  \sum_{m\in \Z} \exp\left( -\frac \beta 2  \left (2\pi m +\sum_{e\in f}\theta(e) \right )^2 \right)d\theta,\] that makes $P_\beta^{Vil}$ is a probability measure.

We now introduce \textbf{Wilson loop observables} which are important gauge-invariant observables of this model and which may be defined as follows.
Let $R$ be a rectangle living in a two dimensional hyperplane parallel to the main axes, and let its boundary be represented by a closed oriented loop $\gamma$. 
The Wilson loop observable associated with $\gamma$ is given by
\begin{align}
W_R(\theta) = W_\gamma(\theta)=W_\gamma:= \prod_{e\in \gamma} e^{i \theta}.
\end{align}
This observable plays a key role as the gauge theory will be {\em confining} or not depending on its asymptotic decay as $R$ grows. (See \cite{guth1980,FSrestoration}).  It has been proved in \cite{guth1980,FSrestoration} that this model exhibits the following striking phase transition.

\begin{theorem}[Perimeter versus area law transition \cite{guth1980,FSrestoration}]\label{th.FS82}
When $\beta$ is large enough, there exists $c(\beta)>0$ so that 
\begin{align*}\label{}
|\EFK{\beta}{}{W_\gamma}| \geq \exp(-c(\beta) |\gamma|)\,,
\end{align*}
uniformly in rectangle loops $\gamma$. This is called the perimeter-law and corresponds to the deconfining phase. 

When $\beta$ is small enough, there exists $\tilde c(\beta)>0$ such that 
\begin{align*}\label{}
|\EFK{\beta}{}{W_\gamma}| \leq \exp(-\tilde c(\beta) \mathrm{Area}(\gamma))\,.
\end{align*}
This is called the area law and it corresponds to the confining phase. 
\end{theorem}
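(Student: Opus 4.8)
The plan is to establish the two halves by the two classical dual descriptions of abelian lattice gauge theory, which behave in opposite ways at low and high temperature; throughout, one works first on a finite box $\Lambda$ with all constants uniform in $\Lambda$ and then passes to the infinite volume. The common first move is Poisson summation on the Villain weight \eqref{e.Villain_mar}: for each plaquette $f$, $\sum_{m\in\Z}\exp\!\bigl(-\tfrac\beta2(2\pi m+\sum_{e\in f}\theta(e))^2\bigr)$ is proportional to $\sum_{n\in\Z}\exp(-\tfrac1{2\beta}n^2)\,e^{\,in\sum_{e\in f}\theta(e)}$. Substituting into $\EFK{\beta}{}{W_\gamma}$ and integrating out $\theta\in[-\pi,\pi)^{\vec E(\Lambda)}$ — whose only effect, since the exponent is linear in each $\theta(e)$ with integer coefficient, is to force the $\Z$-valued $2$-form $n=(n_f)_{f\in F(\Lambda)}$ to have codifferential $\delta n$ equal, up to sign, to the edge-current $j_\gamma$ of $\gamma$ — collapses the expectation to a ratio of constrained lattice sums,
\[ \EFK{\beta}{}{W_\gamma}\;=\;\frac{\sum_{n:\ \delta n=j_\gamma}\ \exp\!\bigl(-\tfrac1{2\beta}\sum_f n_f^2\bigr)}{\sum_{n:\ \delta n=0}\ \exp\!\bigl(-\tfrac1{2\beta}\sum_f n_f^2\bigr)}\,. \]
The two phases of Theorem~\ref{th.FS82} are the two opposite ways this ratio degenerates.

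\textbf{Area law at small $\beta$.} Here $\exp(-\tfrac1{2\beta}\sum_f n_f^2)$ is sharply peaked at $n\equiv 0$, each excited plaquette costing a factor $\lesssim e^{-1/(2\beta)}$. Since $j_\gamma\neq 0$, the numerator contains no $n\equiv 0$ term: every contributing $n$ is supported on a plaquette set containing a surface spanning $\gamma$, the cheapest one being $\pm$ the indicator of a minimal spanning surface, of weight $e^{-\mathrm{Area}(\gamma)/(2\beta)}$. A convergent polymer/cluster expansion in the small parameter $e^{-1/(2\beta)}$ — in which the volume-order bulk contributions cancel between numerator and denominator, leaving only local fluctuations near $\gamma$ — then yields $\bigl|\EFK{\beta}{}{W_\gamma}\bigr|\le \exp\!\bigl(-\tfrac1{2\beta}\mathrm{Area}(\gamma)+O(|\gamma|\,e^{-c/\beta})\bigr)\le e^{-\tilde c(\beta)\mathrm{Area}(\gamma)}$, with $\tilde c(\beta)\to\infty$ as $\beta\to 0$ because $e^{-c/\beta}\ll1/\beta$. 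This is the classical strong-coupling argument (see \cite{guth1980}).

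\textbf{Perimeter law at large $\beta$.} Now the sum over closed $n$ is no longer dominated by $n\equiv 0$ — all $\Z$-valued closed $2$-forms contribute comparably — so one instead keeps the real field strength $\omega:=d\theta+2\pi m$, under which the measure reads $\propto e^{-\frac\beta2\|\omega\|^2}\,d\omega$ restricted to $\omega$ whose exterior derivative $d\omega$ is $2\pi$ times an integer $3$-cochain. That integer $3$-form $q:=\tfrac1{2\pi}d\omega$ is the \emph{monopole current}: by $4$D Hodge duality a gas of closed integer loops on the dual lattice, with fugacity $e^{-c\beta}$ per unit length ($c>0$) and two-body interaction the $4$D lattice Coulomb kernel $\sim\|x-y\|^{-2}$. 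Conditionally on the monopoles $\omega$ is Gaussian, giving the factorization
\[ \EFK{\beta}{}{W_\gamma}\;=\;\exp\!\Bigl(-\tfrac1{2\beta}\,\langle j_\gamma,\,G\,j_\gamma\rangle\Bigr)\cdot\Xi_\gamma(\beta)\,, \]
where the first factor is the \emph{free photon} Wilson loop ($G$ the lattice Green's function on coexact $1$-forms) and $\Xi_\gamma(\beta)$ is the average of $W_\gamma$ in the monopole gas. A routine lattice Green's function computation gives $\langle j_\gamma, G j_\gamma\rangle=C_{GFF}|\gamma|+o(|\gamma|)$ — the leading order from the diagonal and the absolutely convergent near-diagonal interactions along $\gamma$, the $o(|\gamma|)$ from corner effects — so the first factor is already of perimeter type and it only remains to bound $\Xi_\gamma(\beta)\ge e^{-c'(\beta)|\gamma|}$.

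\textbf{The main obstacle.} This last bound is the heart of the matter: one must control the $4$D monopole Coulomb gas uniformly in the volume and show it does not proliferate at large $\beta$, so that the free-photon perimeter law survives the topological defects — equivalently, $\Xi_\gamma(\beta)=e^{\pm O(|\gamma|\,e^{-c\beta})}$. Classically this is done either by renormalization-group block-spin transformations (Guth \cite{guth1980}) or by reflection positivity together with chessboard estimates to dominate the long-range $\|x-y\|^{-2}$ interaction (Fr\"ohlich--Spencer \cite{FSrestoration}); I would follow the latter — use reflection positivity of the Villain measure to bound, via chessboard estimates, the probability of monopole-carrying plaquette patterns, feed these into a Peierls-type sum to control the monopole density, and deduce that only the $O(|\gamma|)$ monopole loops within bounded distance of $\gamma$ affect $\langle W_\gamma\rangle$, each by a factor $1+O(e^{-c\beta})$. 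Making precisely this correction quantitative and non-perturbative, through the technique of \cite{GS2}, is the aim of the present paper.
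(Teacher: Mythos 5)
The paper gives no proof of Theorem \ref{th.FS82}: it is quoted verbatim from \cite{guth1980,FSrestoration} as classical background, so there is no internal argument to compare yours against. Judged on its own terms, your outline correctly identifies the two dual representations and the mechanism of each phase. The duality step is right (Poisson summation turns the Villain weight into $\sum_n e^{-n^2/(2\beta)}e^{in\,\d\theta(f)}$, and integrating out $\theta$ constrains $\d^* n$ to the current of $\gamma$), and the small-$\beta$ half is essentially complete modulo the standard convergence estimates for the strong-coupling cluster expansion. The large-$\beta$ factorization you write down is also consistent with what the paper later proves rigorously for the Villain interaction (Proposition \ref{p.decouplingF} and Corollary \ref{c.correlation} give exactly the splitting of $\E[e^{i\langle \d\theta,\1_R\rangle}]$ into a Gaussian factor and a Coulomb-gas factor), and Proposition \ref{p.energy_wilson} is the Green's-function computation you invoke for $\langle j_\gamma, G j_\gamma\rangle = C_{GFF}|\gamma|+o(|\gamma|)$.

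The genuine gap is the one you yourself flag: the bound $\Xi_\gamma(\beta)\ge e^{-c'(\beta)|\gamma|}$ on the monopole-gas contribution. This is not a "remaining step" — it \emph{is} the theorem. The Gaussian factor alone only gives the upper bound of perimeter type (that is the McBryan--Spencer direction, Corollary \ref{c.Mc}); the lower bound requires showing that the $3$-form defects $q=\tfrac1{2\pi}\d\omega$ do not screen the photon, and nothing in your sketch does this. Moreover, the route you propose for it is misattributed: Fr\"ohlich--Spencer's argument in \cite{FSrestoration} is not reflection positivity plus chessboard estimates, but their multiscale Coulomb-gas expansion (renormalization of charge densities into neutral multipoles across dyadic scales), i.e.\ the same machinery as their BKT proof; a naive Peierls/chessboard bound does not control the long-range $\|x-y\|^{-2}$ interaction between defects, which is exactly why the multiscale analysis (or Guth's RG) is needed. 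So the proposal should be read as a correct roadmap to the classical proofs rather than a proof: the area law is in reach, but the deconfinement half rests entirely on an unproven (and nontrivially attributed) external input.
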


This phase transition shares some similarities with the BKT transition for the XY and Villain model in 2D (proved in \cite{FS}). Indeed, as we shall see below in the case of a Villain interaction, the gauge field  $\theta$ decouples into a Gaussian part and a {\em Coulomb-type} part which corresponds to {\em topological defects} (to be more precise, the decoupling will be proved to hold for the 2-form $\d \theta$ rather than for $\theta$ itself).

\subsection{Main result.}

The objective of this paper is to prove upper bounds on Wilson loop observables which quantify the effect of the topological defects at low temperatures. Note that at high temperature, topological defects are known to play a key role as they are fully responsible for the appearance of the area law/confining phase. 
Our main result may be stated as follows. (See also Theorem \ref{th.main_finite} for a more precise statement). 
\begin{theorem}\label{th.main}
Consider Villain $U(1)$-lattice gauge theory in a graph $\Lambda\subseteq \Z^4$ with either zero or free boundary conditions (also $\Lambda$ may be a finite cube or the infinite lattice).  There exists a constant $C>0$ such that for any $\beta\geq 1$ and any loop $\gamma$ which is sufficiently rectangular and sufficiently far from $\p \Lambda$, then the Wilson loop observable $W_\gamma$ satisfies 
\begin{align}\label{e.WL}
|\EFK{\beta}{}{W_\gamma}| \leq  \exp \left(-\frac{C_{GFF}} {2\beta}(1+ C \beta e^{-2\pi^2\beta} )(|\gamma|+o(|\gamma|)) \right)\,,
\end{align}
where $|\gamma|$ is the perimeter of the loop $\gamma$ and where the constant $C_{GFF}$ is defined out of $G_{\Z^4}$, the Green's function of the graph\footnote{By the Green's function of the graph of $\Z^4$, we mean here the Green's function of the simple random walk divided by the degree, i.e. $8$.} on the vertices of $\Z^4$ (see  Section \ref{ss.Laplacian_0}), as follows
\begin{align}\label{e.cGFF}
C_{GFF}:= \sum_{k\in \Z} G_{\Z^4}(0, k \,e_1)\,. 
\end{align}
\end{theorem}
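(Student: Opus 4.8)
The plan is to exploit the decoupling of the field strength $\d\theta$ into a Gaussian spin-wave component and a gas of topological defects, and to treat the two contributions to $|\EFK{\beta}{}{W_\gamma}|$ separately: the Gaussian part produces the leading string tension $C_{GFF}/(2\beta)$, while the defect gas is responsible for the strictly positive correction of relative size $C\beta e^{-2\pi^2\beta}$. Concretely, I would Poisson-dualize the Villain weight \eqref{e.Villain_mar}, turning the plaquette sums over $m\in\Z$ into an integer-valued $2$-form carrying a divergence-free monopole current; conditionally on this current the field $\theta$ is Gaussian, so that the decoupling yields a factorization
\[
\EFK{\beta}{}{W_\gamma}=\EFK{\beta}{\GFF}{W_\gamma}\cdot \Phi(\gamma),
\]
where the first factor is the spin-wave (massless Gaussian) Wilson loop and $\Phi(\gamma)$ is the average, under the topological-defect gas, of the phase by which a defect configuration couples to the surface spanned by $\gamma$. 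The whole proof then reduces to establishing (i) $\bigl|\EFK{\beta}{\GFF}{W_\gamma}\bigr|=\exp\bigl(-\tfrac{C_{GFF}}{2\beta}(|\gamma|+o(|\gamma|))\bigr)$ and (ii) $|\Phi(\gamma)|\le \exp\bigl(-\tfrac{C_{GFF}}{2\beta}\,C\beta e^{-2\pi^2\beta}(|\gamma|+o(|\gamma|))\bigr)$, the product of the two being exactly \eqref{e.WL}.

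For (i), writing $\oint_\gamma\theta=\<{\theta,J_\gamma}$ with $J_\gamma$ the integer current supported on the edges of $\gamma$, the spin wave is centred, so $\bigl|\EFK{\beta}{\GFF}{W_\gamma}\bigr|=\exp\bigl(-\tfrac{1}{2\beta}\<{J_\gamma,G J_\gamma}\bigr)$ with $G=G_{\Z^4}$. Expanding the quadratic form, the dominant contribution comes from pairs of edges lying on the same long straight side of $\gamma$: for such a side in direction $e_1$ the double sum telescopes to $|\gamma|\sum_{k\in\Z}G(0,ke_1)+o(|\gamma|)=C_{GFF}|\gamma|+o(|\gamma|)$, exactly the constant \eqref{e.cGFF}, while transverse pairs, the four corner contributions, and the finite-volume/boundary corrections are all $o(|\gamma|)$. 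This is precisely where the hypotheses that $\gamma$ be sufficiently rectangular and sufficiently far from $\p\Lambda$ enter.

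The heart of the matter, and where I expect the real difficulty, is (ii): producing a genuine exponential decay of $\Phi(\gamma)$ with the sharp rate $e^{-2\pi^2\beta}$, so that defects strictly \emph{increase} the tension rather than merely leaving it unchanged. By the monopole/antimonopole symmetry of the defect gas $\Phi(\gamma)$ is real, and a crude $|\Phi(\gamma)|\le 1$ only recovers the pure spin-wave tension; the content is the strict improvement. At low temperature the gas is dilute, a minimal defect (a unit current on a single plaquette) carrying activity $e^{-2\pi^2\beta}$; such a defect sitting within $O(1)$ of $\gamma$ couples to $J_\gamma$ with an $O(1)$ phase, so that averaging over its two orientations replaces the local factor $1$ by a $\cos(\cdot)<1$ that is bounded away from $1$. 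Running over the $\asymp|\gamma|$ minimal-defect positions adjacent to $\gamma$ and exponentiating produces an additional tension of order $e^{-2\pi^2\beta}$, the prefactor assembling into $\tfrac{C_{GFF}}{2\beta}\cdot C\beta$. Making this rigorous uniformly in $\beta\ge 1$ is exactly the role of the non-perturbative technique of \cite{GS2}: rather than running a merely conditionally convergent cluster expansion, I would isolate the effect of a single minimal defect and control the remainder — its interaction with the rest of the gas and with the Gaussian field — by the correlation structure of the ensemble, so as to guarantee both the correct sign and the sharp constant $2\pi^2$, and thereby the upper bound on $|\Phi(\gamma)|$ claimed in (ii).

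Finally I would collect the error terms: all the $o(|\gamma|)$ coming from the Gaussian diagonalization, from the corners and boundary, and from the dilute-gas exponentiation are of lower order than $|\gamma|$ and can be absorbed into the single $o(|\gamma|)$ appearing in \eqref{e.WL}, and multiplying (i) by (ii) yields the stated estimate. The main obstacle remains securing (ii) non-perturbatively, since losing the sign or the constant $2\pi^2$ would either erase the improvement over the spin-wave bound or degrade the exponent to a temperature range where no honest expansion converges.
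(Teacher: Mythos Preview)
Your high-level decomposition is right and matches the paper: one factorizes $\EFK{\beta}{}{W_\gamma}$ into a Gaussian spin-wave piece and a defect piece, the Gaussian part yields $\exp\bigl(-\tfrac{C_{GFF}}{2\beta}(|\gamma|+o(|\gamma|))\bigr)$ by the Green-function computation you describe, and the content is the strict improvement (ii). Your treatment of (i) is correct.

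The gap is in (ii). What you sketch --- isolating a minimal defect of activity $e^{-2\pi^2\beta}$, averaging over its two orientations to produce a $\cos(\cdot)<1$, and controlling its interaction with the rest of the gas --- is essentially a dilute-gas/cluster-expansion picture. This is precisely what the paper \emph{avoids}, and you yourself flag that making it rigorous uniformly in $\beta\ge 1$ is the obstacle. The non-perturbative mechanism from \cite{GS2} that the paper uses is quite different from your description and rests on three specific points you are missing:
\begin{enumerate}
\item One does \emph{not} analyze the Coulomb gas $q$ directly. Instead one works with the joint coupling $(\theta,m)$ of Definition~\ref{d.Villain_final}: conditionally on $\theta$, the $2$-form $m$ is an i.i.d.\ collection of integer-valued Gaussians, $m(f)\sim\mathcal N^{IG}(-\d\theta(f)/2\pi,(2\pi)^2\beta)$. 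The defect factor equals $\E^{Vil}_\beta\bigl[e^{i2\pi\langle m,\proj{3}{2}\1_R\rangle}\bigr]$, and the bound on its characteristic function comes from a second-moment (variance) estimate on $m$ under this quenched law, not from any defect expansion. The constant $e^{-2\pi^2\beta}$ arises as $\inf_a\var^{IG}(a,(2\pi)^2\beta)$, not as a minimal-defect activity.
\item A key algebraic trick: since $\1_R$ is integer-valued and $m\in\Z$, one has $e^{i2\pi\langle m,\proj{3}{2}\1_R\rangle}=e^{-i2\pi\langle m,\proj{1}{2}\1_R\rangle}$. This replaces $\proj{3}{2}\1_R$ (supported far from $\gamma$, hard to control) by $\proj{1}{2}\1_R$, which is exactly the function whose $\ell^2$-norm already appeared in (i).
\item The variance bound on $m$ only works where the test function $h=\proj{1}{2}\1_R$ is pointwise small (a Taylor expansion of $\cos$). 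One therefore needs that the $\ell^2$-mass of $\proj{1}{2}\1_R$ survives truncation at a small level $b$: this is the ``spread-energy'' Proposition~\ref{p.spread_energy}, proved by explicit Green-function decay away from $\gamma$.
\end{enumerate}
Without (1)--(3), your argument for (ii) remains heuristic; in particular, the ``control the remainder by the correlation structure of the ensemble'' step is exactly the difficulty that the conditional-variance method sidesteps. A minor additional point: the decoupling in the paper is not a Poisson dualization but an orthogonal decomposition of $\d\theta+2\pi m$ along $\Omega^{1\to 2}\oplus\Omega^{3\to 2}$ (Proposition~\ref{p.decouplingF}), which is what makes the spin-wave and $q=\d m$ exactly independent.
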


Based on RG techniques and inspired by the  seminal work \cite{Kadanoff} on the $2d$ Villain model, Alan Guth predicted in \cite{guth1980} the following behaviour as $\beta\to \infty$ for large Wilson loop observables:
\begin{align}\label{e.AlanGuth}
\EFK{\beta}{}{W_\gamma} = \exp \left(-\frac{C_{GFF}} {2\beta}(1+  e^{-\pi^2 \beta +o(\beta)})(|\gamma|+o(|\gamma|) \right)\,.
\end{align}
Our results also allow us to improve our correction term to $e^{-\pi^2\beta}$ in the context of the free-energy. To be more precise, the following theorem upper bounds the derivative of the free-energy of a Villain $U(1)$-lattice gauge theory for either free or $0$-boundary condition

\begin{theorem}\label{t.d_free_energy}
	Take the graph $\Lambda_j:=[-j,j]^4\cap \Z^4$. Then, for any $\delta>0$ there exists a $\beta_0>0$ such that for any $\beta>\beta_0$
	\begin{align}\label{e.d_free_energy}
	\limsup_{j\to \infty}  \frac{1}{4(2j)^4}\left( \frac{\partial}{\partial \beta }\ln Z^{Vil}_{\beta,\Lambda_j}\right) \leq -\frac{3}{4}\left(\frac{1}{2\beta} + \frac{1}{2}e^{-\pi^2 (\beta+\delta)}\right),
	\end{align}
	for either free or zero boundary condition.

\end{theorem}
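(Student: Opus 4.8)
The plan is to start from the thermodynamic identity and reduce the statement to a sharp lower bound on the expected energy. Writing the joint Villain energy of a configuration $(\theta,\bm)$ as $H(\theta,\bm)=\tfrac12\sum_{f\in F(\Lambda_j)}\bigl(2\pi m_f+\sum_{e\in f}\theta(e)\bigr)^2$, differentiation under the integral sign gives $\frac{\partial}{\partial\beta}\ln Z^{Vil}_{\beta,\Lambda_j}=-\E^{Vil}_\beta[H]$. Hence \eqref{e.d_free_energy} is equivalent to the lower bound
\[
\liminf_{j\to\infty}\frac{1}{4(2j)^4}\,\E^{Vil}_\beta[H]\ \geq\ \frac34\left(\frac{1}{2\beta}+\frac12\, e^{-\pi^2(\beta+\delta)}\right),
\]
valid for free or zero boundary conditions. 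Note that $\E^{Vil}_\beta[H]\ge 0$ already gives the trivial sign; the whole content is the additive $e^{-\pi^2(\beta+\delta)}$ improvement over the pure spin-wave value.

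Next I would invoke the decoupling of the $2$-form $\d\theta$ into an independent Gaussian (spin-wave) field and a gas of topological defects, announced in the introduction and underlying the analysis of Theorem~\ref{th.main}. Writing correspondingly $H=H_{\mathrm{sw}}+H_{\mathrm{def}}$, independence and centring of the Gaussian part give $\E^{Vil}_\beta[H]=\E^{Vil}_\beta[H_{\mathrm{sw}}]+\E^{Vil}_\beta[H_{\mathrm{def}}]$ with $H_{\mathrm{def}}=\tfrac12\|\phi_{\mathrm{def}}\|^2\ge 0$. The Gaussian part is exactly equipartitioned over its non-gauge modes, so $\E^{Vil}_\beta[H_{\mathrm{sw}}]=\frac{3(2j)^4}{2\beta}(1+o(1))$; here the count of modes is (edges minus vertices) and the ratio $3/4$ in \eqref{e.d_free_energy} is precisely (edges minus vertices) over edges. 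Dividing by $4(2j)^4$ reproduces the $\frac34\cdot\frac1{2\beta}$ term, and the theorem is reduced to the lower bound $\E^{Vil}_\beta[H_{\mathrm{def}}]\ge \frac32 (2j)^4\, e^{-\pi^2(\beta+\delta)}(1+o(1))$ on the expected defect energy.

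The crucial point --- and the source of the improvement from $2\pi^2$ to $\pi^2$ --- is the cost of a single minimal defect. A unit defect sitting on one plaquette $f_0$ has charge $\bm=\1_{f_0}$, and once the spin wave relaxes its energy equals $\frac{(2\pi)^2}{2}\,\dist(\1_{f_0},\mathrm{im}\,d)^2=\frac{(2\pi)^2}{2}\,\|P_{\mathrm{coex}}\1_{f_0}\|^2$, where $P_{\mathrm{coex}}$ is the projection onto co-exact $2$-forms. In dimension four the Hodge star is an isometry of $2$-forms that interchanges exact and co-exact parts; combined with the lattice symmetry carrying a plaquette to its dual plaquette it forces $\|P_{\mathrm{ex}}\1_{f_0}\|^2=\|P_{\mathrm{coex}}\1_{f_0}\|^2=\tfrac12$, so the relaxed action of the cheapest defect is exactly $\pi^2$ instead of the naive $2\pi^2$. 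I would then bound $\E^{Vil}_\beta[\|\phi_{\mathrm{def}}\|^2]$ from below by restricting the nonnegative integrand to the pairwise disjoint events that a single minimal defect sits at a given plaquette $f_0$: summing over the order-$(2j)^4$ plaquettes, each such event has probability comparable to $e^{-\pi^2\beta}$ and $\|\phi_{\mathrm{def}}\|^2=2\pi^2$ on it. The non-perturbative control of the defect gas furnished by the extended \cite{GS2} technique guarantees that the defects are dilute, so that the defect-gas normalisation tends to $1$ and these single-defect contributions are not cancelled by multi-defect or higher-action (rate $c>\pi^2$) configurations.

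The main obstacle is to make this lower bound sharp, i.e.\ to recover exactly Guth's coefficient $\tfrac12$ \cite{guth1980}. The weight of a single minimal defect is not simply $e^{-\pi^2\beta}$: it carries a Gaussian fluctuation prefactor (the ratio of the fluctuation determinants around the defect and around the vacuum) together with the combinatorial count of minimal defect types per site, and the delicate part of the argument is to evaluate this prefactor and to check that the summed single-defect contribution reaches $\frac32 (2j)^4\,e^{-\pi^2\beta}$ up to a factor tending to $1$. This is exactly where $\delta>0$ and the regime $\beta>\beta_0(\delta)$ are used: the fixed slack $e^{-\pi^2\delta}<1$ means only the limiting constant --- and not its rate of approach --- must be identified, so the $(1+o(1))$ losses above are harmless for $\beta$ large. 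Finally, the operation $\limsup_{j\to\infty}\frac{1}{4(2j)^4}$ discards the $o((2j)^4)$ boundary contributions, which is what makes the estimate insensitive to the choice of free or zero boundary conditions.
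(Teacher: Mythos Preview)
Your overall architecture --- thermodynamic identity, decoupling into a Gaussian spin-wave and a defect part, equipartition for the Gaussian piece yielding the $\tfrac34\cdot\tfrac1{2\beta}$ term --- matches the paper's proof (which implements the decoupling at the level of partition functions via Corollary~\ref{l.decoupling_partition_function}). Your identification of the exponent $\pi^2$ through the Hodge symmetry $\|\proj{3}{2}\1_f\|^2=\tfrac12$ is also the correct mechanism and is used in the paper.

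The genuine gap is in your lower bound on $\E[H_{\mathrm{def}}]$. Restricting to the disjoint events ``the entire configuration is a single minimal defect at $f_0$'' gives
\[
\E[H_{\mathrm{def}}]\ \geq\ \sum_{f_0} 2\pi^2\,\frac{e^{-\pi^2\beta}}{Z^{Coul}_{\beta,\Lambda_j}}\,,
\]
and here $Z^{Coul}_{\beta,\Lambda_j}$ does \emph{not} tend to $1$ in the relevant limit: for fixed $\beta$ it grows like $\exp(c_\beta(2j)^4)$ as $j\to\infty$, so the right-hand side is exponentially small in the volume, not of order $(2j)^4$. The sentence ``the defect-gas normalisation tends to $1$'' is precisely where the argument breaks. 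A cluster-expansion repair would be perturbative, which is what the paper is trying to avoid.

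The paper circumvents this with a different, genuinely local, mechanism. Writing $q=\d m$ and using that, conditionally on $\theta$, the $m(f)$ are independent integer-Gaussians (Proposition~\ref{pr.Villain}), the law of total variance gives for any $3$-form $h$
\[
\E^{Coul}_\beta\big[\langle q,h\rangle^2\big]\ \geq\ \sum_{f}\tilde M(\beta,f)\,(\d^*h(f))^2,\qquad \tilde M(\beta,f):=\E^{Vil}_\beta\!\left[\var^{IG}\!\Big(\tfrac{-\d\theta(f)}{2\pi},(2\pi)^2\beta\Big)\right],
\]
with no global normalisation constant involved. The $e^{-\pi^2\beta}$ then enters not through the Boltzmann weight of a defect but through the spin-wave: since $\var(\varrho(f))=\tfrac1{2\beta}$ (by exactly the Hodge symmetry you invoke), the event $\{\d\theta(f)\bmod 2\pi\approx\pi\}$ has probability $\gtrsim e^{-\pi^2\beta}$, and on that event the conditional variance of $m(f)$ is of order one, yielding $\tilde M(\beta,f)\geq e^{-\pi^2\beta(1+\delta)}$. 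Plugging this into $\E^{Coul}_\beta[\langle q,(-\Delta)^{-1}q\rangle]=\sum_f\E[\langle \d^*\Delta^{-1}q,\1_f\rangle^2]$ and using $\sum_f\|\proj{3}{2}\1_f\|^2=\dim\Omega^{2\shortrightarrow 3}=3(2j)^4+O(j^3)$ produces the required $\tfrac38 e^{-\pi^2\beta(1+\delta)}$. So the missing idea is to trade the global ``single-defect'' picture for the conditional-variance identity coming from the $(\theta,m)$ coupling.
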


Let us discuss the terms appearing in \eqref{e.d_free_energy}. The term $4(2j)^4$ corresponds to the degrees of randomness that have functions in $E(\Lambda)$\footnote{More precisely $\binom{4}{1}(2j+1)^3(2j)$ for free boundary condition and $\binom{4}{1}(2j)(2j-1)^3$ for the $0$-boundary case.}. The term $3/4$ comes from the fact that the linear function $\theta \mapsto (\sum_{e\in f} \theta(e))_{f\in F(\Lambda)}$ has a non-zero kernel.  Then there are the two summands. The first is exactly the one coming from the Gaussian spin-wave. The second one is the most interesting one as it comes from the topological defects of vortices, and thus $e^{-(\pi^2+\delta)\beta}$ corresponds to the correction term predicted by Alan Guth \cite{guth1980}.


\subsection{Links with previous works.}\label{ss.links}
Let us briefly make some connections with other works in the subject. 

\bnum
\item The following lower bound (at low temperature only) may be easily extracted from the seminal work \cite{FSrestoration}: there exists $\beta \mapsto \eps(\beta)$ which goes to zero as $\beta\to \infty$ and which is such that when $\beta$ is large enough, 
\begin{align}\label{e.WL}
|\EFK{\beta}{}{W_\gamma}| \geq  \exp \left(-\frac{C_{GFF}} {2\beta}(1+\eps(\beta)) (|\gamma|+o(|\gamma|)) \right)\,.
\end{align}
As such our main result complements the results from \cite{guth1980,FSrestoration} and implies the following lower bound on Fröhlich-Spencer correction exponent 
\[
\eps(\beta) \geq C \beta \exp\left (-\frac {(2\pi)^2} 2 \beta\right )\,.
\]


\item  In the works \cite{gross1983convergence, driver1987convergence}), Gross and Driver show respectively that $U(1)_3$ and $U(1)_4$ lattice gauge theories (i.e resp on $\Z^3$ and $\Z^4$) rescale as the mesh of the lattice goes to zero to the {\em free electromagnetic wave} on $\R^3$ (resp. $\R^4$). As opposed to our present setting, topological defects do not play a role in Gross' result. This is due to the fact that in the case $d=3$, the natural scaling limit $a\Z^3 \to \R^3$ leads to a renormalized inverse temperature $\beta_a := \beta a^{-1}$ when the mesh $a\searrow 0$. At such low temperatures, vortices  do not play a visible role anymore. In $d=4$, the inverse temperature does not scale anymore with the mesh $a$ and the setup then corresponds to ours. Driver obtains a convergence towards the {\em free electromagnetic wave} with an effective inverse temperature $\beta_{eff}=\beta \alpha^{-1}$ (following notations from \cite{driver1987convergence}). Our present result thus implies quantitative  bounds on the correction term $\alpha$. More importantly, one cannot deduce improved spin-wave estimates for Wilson observables from \cite{driver1987convergence} as Wilson loop observables are too degenerate to be still measurable in the continuum limit.
 
\item There has been an intense activity recently on the analysis of Wilson loop observables for discrete gauge groups on $\Z^4$. It started with the work \cite{Sourav} for the gauge group $G=\Z_2$ followed by the works  \cite{malin,cao} which considered respectively finite Abelian groups and general discrete groups (see also the recent \cite{forsstrom2021decay}). There are two main differences with our present work: 
\bnum
\item In these works, as explained for example in \cite{cao}, the proofs require to focus on discrete gauge groups (in particular for the definition of vortices) while our present method allows us to deal with the gauge group $U(1)$. On the other hand, our proof technique would not extend to discrete gauge groups as we deeply rely on the {\em spin-wave} decoupling which to our knowledge does not have an analog in the discrete case. As such our work is complementary to \cite{Sourav,malin,cao}. See also the related Remark \ref{r.SouravUB}. 
\item A second main difference is that in \cite{Sourav,malin,cao}, the focus is in obtaining a precise evaluation of Wilson loop observables $\EFK{\beta}{}{W_\gamma}$ in the regime where the observable is bounded away from 0. For example when $G=\Z_2$, it is shown in \cite{Sourav} that as $\beta\to \infty$, rectangular-enough loops $\gamma$ that have length $|\gamma|\asymp e^{12\beta}$  satisfy $\EFK{\beta}{}{W_\gamma} = e^{-2 |\gamma| e^{-12\beta}} + o(1)$. In our present case we shall also extract from our proof precise estimates of $\EFK{\beta}{}{W_\gamma}$ in the regime where it is non degenerate (this happens for $U(1)$-lattice gauge theory for much shorter loops $\gamma$ of length $|\gamma| \asymp \beta$). Even though this is not the focus of this paper as topological defects are invisible at that scale, we included in Corollary \ref{c.sourav} a statement in the spirit of \cite{Sourav,malin,cao}.  

In this paper, we rather focus on establishing bounds on Wilson Loop observables which hold for arbitrary large macroscopic loops and which reveal the influence of vortices.
(See also Remark \ref{r.SouravUB}). 

\enum

\item Glimm-Jaffe obtained in \cite{glimm1977quark} an improvement w.r.t the perimeter decay for Wilson loops observables in $3d$ $U(1)$ lattice gauge theory. This implied the confinement of quarks in $3d$ $U(1)$ gauge theory at all temperatures. We obtain this result (for the Villain interaction) as a corollary of the spin-wave decoupling property (Proposition \ref{p.decouplingF}).

\item Using deep homogenization and PDE techniques, Dario and Wu obtained in \cite{DarioWu} the existence of  an effective temperature in the context of $3d$ Villain model. It is possible that their techniques would extend to the present setting of $4d$ lattice gauge theory (with Villain interaction). If so this would give the existence of an effective temperature $\beta\mapsto \beta_{eff}$ which would capture the effect of vortices for large macroscopic Wilson loops. Our present analysis would then provide lower bounds on the deviation of $\beta_{eff}$ from $\beta$. 

\item It would be interesting to try extending the results of this paper when matter is coupled to the gauge field. See for example \cite{wenzel2008percolation} as well as the recent paper \cite{forsstrom2021wilson} which deals with finite Abelian groups. 

\item The references we mentioned above deal with lattice gauge theories (and their scaling limits). Let us stress that there is also a vast literature on building gauge theories in the continuum, we refer to \cite{MR2006374} and references therein as well as to the recent works \cite{shen2021stochastic,chandra2020langevin} which apply new stochastic quantization ideas in order to build the $2D$ Yang-Mills  measure. 

\enum

\subsection{Idea of proof.}


The proof  follows closely the analysis in \cite{GS2}, however we shall see that the framework of lattice gauge theory presents challenges that do not exist in the case of the $2d$ Villain model. Here is a short outline of how the proof works. 


\begin{enumerate}
	\item First, we establish a  decoupling in the Villain $U(1)$-lattice gauge theory between a suitably defined {\em spin-wave} (which will turn out to be the gradient of a GFF on $1$-forms) and a cloud of {\em topological defects}, which behaves like a certain Coulomb gas defined on the 3-cells (see Proposition \ref{p.decouplingF}). 
Our analysis is based on discrete differential calculus  and shares some similarities with the arguments in \cite{FSrestoration}, 	except as in \cite{GS2}, the emphasis here is on the introduction of a new probabilistic object: the joint coupling $(\theta,m)$ where the $1$-form $\theta\sim \P_\beta^{Vil}$ and where $m$ is a random $2$-form whose quenched law given $\theta$ will be of great use. (N.B. both $\theta$ and $m$ appear in the definition of the Villain-interaction, the novelty from \cite{GS2} is to promote the role of the summation variable $m$ in the partition function of the Villain model to a proper random variable whose fluctuations can be computed efficiently). 
A significant difference with \cite{GS2} is that our decoupling would not hold at the level of $\theta$ (as it does for the $2d$ Villain model)  but only after applying the pushforward under $\d$ and considering the $2$-form $\d \theta$.

This more subtle decoupling statement allows us in Section \ref{ss.algo} to extend the local sampling algorithm for the Coulomb gas introduced in \cite{GS2} in dimension $n=2$  to any dimensions $n\geq 3$. 
	

	\item We then show that the cloud of topological defects give a contribution comparable to that of the spin-wave when they are tested again spread enough functions (Lemma \ref{l.Fourier_m}). Here lies the main difference between Theorem \ref{th.main} and Theorem \ref{t.d_free_energy}. The bounds for the first one is in a certain sense related to the worst case scenario for $m$, while the second one is related to its mean value.
	\item Finally, we show that the function which is associated with the Wilson loop observable is sufficiently well-spread for us to apply step (2). 
	This step also differs from \cite{GS2}. Indeed the specificity of the Wilson loop observable requires us to understand the behaviour of the inverse of the Laplacian on the edges of the graph instead of on the vertices. The study of this Laplacian is carried out in Section \ref{s.Laplacian_1} and the fact that the energy is indeed well-spread is obtained in Proposition \ref{p.spread_energy}.
\end{enumerate}

\subsection{Organization of the paper.} 
In Section \ref{s.Pr} we start with some preliminary background on discrete differential calculus on $\Z^d$ and we define the main statistical physics models used throughout.  In Section \ref{s.Laplacian_1}, we analyze the Green operator $\Delta^{-1}$ when acting on the $1$-forms of $\Lambda \subset \Z^4$. We shall focus on both free and Dirichlet boundary conditions.  The main purpose of Section \ref{s.OD} is to introduce the {\em gradient spin-wave}: a Gaussien field on the $2$-cells  which plays a key role in the decoupling  property proved in Section \ref{s.decoupling}. Section \ref{s.energy} then computes the Dirichlet energy of a Wilson loop $\gamma$. Section \ref{s.coro} gives several useful direct corollaries of the decoupling statement Proposition \ref{p.decouplingF}. Finally Section \ref{s.proof} concludes the proof of the main theorem \ref{th.main} and Section \ref{s.FE} proves the $e^{-\pi^2 \beta}$ correction to the free energy predicted in \cite{guth1980}.

\subsection{Acknowledgements.}
We wish to thank Malin Palö Forsström for useful discussions. 
The research of C.G. is supported by the ERC grant LiKo 676999 and the research of A.S was  supported by the ERC grant LiKo 676999 and is now supported by ANID/PIA Apoyo a Centros Científicos y Tecnológicos de Excelencia AFB 170001 and FONDECYT iniciación de investigación N° 11200085.

\section{Preliminaries}\label{s.Pr}

\subsection{Integer-valued Gaussian random variable.}\label{ss.IVG}
{We follow closely the presentation in \cite{GS2} to which we refer for more details. \label{ss.IG}Integer-valued Gaussian random variable, sometimes called discrete Gaussian variables, are normal random variables conditioned to take values in $\Z$. More precisely, we define $X\sim \mathcal N^{IG}(a,\beta)$ if a.s. $X\in \Z$ and for any $k\in \Z$
\begin{equation}
\P_{\beta,a}^{IG}\left[X=k \right]\propto e^{-\frac{\beta}{2}(k-a)^2}\,,
\end{equation}
(recall that $\propto$ stands for proportional to). 
In this work, as in \cite{GS2}, we prefer to make reference to $(a,\beta)$ instead as $(\mu,\sigma^2)$.  Note that  $a$ is not the mean of $X$, nor $\beta^{-1}$ its variance. For $X$ an IV-Gaussian random variable of parameters $a$ and $\beta$, we denote
\begin{align}
&\mu^{IG}(a,\beta):= \E^{IG}_{\beta,a}\left[X\right],\\
&\var^{IG}(a,\beta):= \var_{\beta,a}^{IG}\left[X \right]\\
&T^{IG}(a,\beta):=\E^{IG}_{\beta,a}\left[|X-\mu^{IG}(a,\beta)|^3 \right].
\end{align}

\noindent
The following {\em error function} $\beta\mapsto M(\beta)$ will be used throughout in this text.
\begin{align}\label{e.M} 
M(\beta):= (2\pi)^2\beta\inf_{a\in[0,1/2]}\var^{IG}(a,(2\pi)^2\beta).
\end{align}

\noindent
We shall use the following estimates on $\var^{IG}(a,\beta)$, $T^{IG}(a,\beta)$ and $M(\beta)$ from Appendix B in \cite{GS2}.

\begin{proposition}[Appendix B in \cite{GS2}]\label{pr.Monotonicity_IG} $ $

\bi
\item[i)] For all $\beta>10$ and $a\in \R$
\begin{align}\label{e.variance_IG_bound}
\var^{IG}(a,\beta)\geq \frac{1}{16}e^{-\frac{\beta(1-2a)}{2}}\,.
\end{align}
\item[ii)] For any $\beta>0$, 
\begin{align}\label{e.K_beta}
K_\beta:= \sup_{\hat \beta> \beta} \sup_{a\in \R} \frac{T^{IG}(a,\hat \beta)}{\var^{IG}(a,\hat \beta)}\in(0,\infty)
\end{align}
\item[iii)]  For any $\beta\geq\tfrac13$, 
\begin{align}\label{}
M(\beta) \geq 2\beta \exp\left (-\frac{(2\pi)^2}{2} \beta\right )\,.
\end{align}
\ei
\end{proposition}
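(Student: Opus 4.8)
The plan is to establish the lower bound (i) first; parts (ii) and (iii) then follow with little extra work. Throughout I would reduce to $a\in[0,1/2]$ using that $a\mapsto\var^{IG}(a,\beta)$ is $1$-periodic and satisfies $\var^{IG}(a,\beta)=\var^{IG}(1-a,\beta)$ — both seen by applying $X\mapsto X-1$, resp.\ $X\mapsto1-X$, to $X\sim\mathcal N^{IG}(a,\beta)$ — and similarly for $T^{IG}$ and $\mu^{IG}$.

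For (i), the starting point is the elementary identity $\var^{IG}(a,\beta)=\tfrac12\,\E[(X-X')^2]$ with $X'$ an independent copy of $X$. Writing $p_k=e^{-\frac\beta2(k-a)^2}/Z$ and keeping only the two pairs $(j,k)\in\{(0,1),(1,0)\}$ in $\tfrac12\sum_{j,k}p_jp_k(j-k)^2$ already gives $\var^{IG}(a,\beta)\ge p_0p_1$. It then remains to bound the partition function $Z=\sum_{k\in\Z}e^{-\frac\beta2(k-a)^2}$ from above: for $a\in[0,1/2]$ one has $Z=e^{-\frac\beta2a^2}\sum_{k\in\Z}e^{-\frac\beta2k(k-2a)}$, and since $k(k-2a)\ge k(k-1)\ge0$ for $k\ge1$ while $k(k-2a)\ge k^2$ for $k\le-1$, the sum $\sum_{k\in\Z}e^{-\frac\beta2k(k-2a)}$ is dominated by its $k=0$ and $k=1$ terms and is at most $3$ once $\beta>10$. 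Plugging $Z\le3e^{-\frac\beta2a^2}$ in and simplifying the exponents yields $\var^{IG}(a,\beta)\ge p_0p_1\ge\tfrac19\,e^{-\frac\beta2(1-2a)}\ge\tfrac1{16}\,e^{-\frac\beta2(1-2a)}$, which is (i).

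For (ii), positivity of $K_\beta$ is immediate since $X$ is non-degenerate, so the real content is finiteness. I would fix $a\in[0,1/2]$, set $\mu=\mu^{IG}(a,\hat\beta)$, and first note that the family is exponential in $a$, hence $\tfrac{d}{da}\mu=\hat\beta\,\var^{IG}(a,\hat\beta)\ge0$; since $\mu$ equals $0$ at $a=0$ and $1/2$ at $a=1/2$, this forces $\mu\in[0,1/2]$. Consequently $|k-\mu|\le1$ for $k\in\{0,1\}$, so $\sum_{k\in\{0,1\}}|k-\mu|^3p_k\le\sum_{k\in\{0,1\}}(k-\mu)^2p_k\le\var^{IG}(a,\hat\beta)$; and for $k\notin\{0,1\}$ one has $(k-a)^2-a^2\ge\max(1,\tfrac12k^2)$, so $p_k\le e^{-\frac{\hat\beta}2\max(1,k^2/2)}$ and $\sum_{k\notin\{0,1\}}(|k|+1)^3p_k\le Ce^{-\hat\beta/2}$ for $\hat\beta>10$, with $C$ absolute. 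Combined with (i), which gives $\var^{IG}(a,\hat\beta)\ge\tfrac1{16}e^{-\hat\beta/2}$, this yields $T^{IG}(a,\hat\beta)\le(1+16C)\var^{IG}(a,\hat\beta)$ for every $\hat\beta>10$. The leftover range $\hat\beta\in(\beta,10]$, nonempty only if $\beta\le10$, is handled by compactness: $(a,\hat\beta)\mapsto T^{IG}/\var^{IG}$ is continuous and finite on $[0,1/2]\times[\beta,10]$. Hence $K_\beta<\infty$.

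For (iii), note that $\beta\ge\tfrac13$ forces $(2\pi)^2\beta\ge4\pi^2/3>10$, so (i) applies with parameter $(2\pi)^2\beta$ and gives $\var^{IG}(a,(2\pi)^2\beta)\ge\tfrac1{16}e^{-\frac{(2\pi)^2\beta}2(1-2a)}$ for all $a\in[0,1/2]$. The right-hand side is smallest at $a=0$, so $M(\beta)=(2\pi)^2\beta\,\inf_{a\in[0,1/2]}\var^{IG}(a,(2\pi)^2\beta)\ge\tfrac{(2\pi)^2}{16}\beta\,e^{-\frac{(2\pi)^2}2\beta}=\tfrac{\pi^2}{4}\beta\,e^{-\frac{(2\pi)^2}2\beta}\ge2\beta\,e^{-\frac{(2\pi)^2}2\beta}$, using $\pi^2/4>2$. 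The one genuinely delicate point is the upper bound on $Z$ in step (i): it is precisely what upgrades the cheap inequality $\var^{IG}\ge p_0p_1$ to the correct exponential rate $e^{-\beta(1-2a)/2}$; everything else is routine. I would only flag that these estimates are naturally stated on the fundamental domain $a\in[0,1/2]$, the case of general $a$ being reduced to it via periodicity and the reflection symmetry.
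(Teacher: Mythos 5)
The paper does not actually prove this proposition: it is imported verbatim from Appendix~B of \cite{GS2}, so there is no in-text argument to compare against. Judged on its own, your proof is complete and correct, and it is exactly the kind of elementary argument one would expect that appendix to contain. The three key moves all check out: (a) the bound $\var^{IG}(a,\beta)=\tfrac12\E[(X-X')^2]\ge p_0p_1$ combined with the partition-function estimate $Z\le 3e^{-\frac\beta2a^2}$ for $a\in[0,1/2]$, $\beta>10$ (the inequalities $k(k-2a)\ge k(k-1)$ for $k\ge1$ and $k(k-2a)\ge k^2$ for $k\le-1$ are what make the tail negligible), which gives the rate $\tfrac19e^{-\frac\beta2(1-2a)}$ since $(1-a)^2-a^2=1-2a$; (b) for (ii), the exponential-family identity $\tfrac{d}{da}\mu^{IG}=\hat\beta\,\var^{IG}\ge0$ pinning $\mu^{IG}\in[0,1/2]$, so that the $k\in\{0,1\}$ part of $T^{IG}$ is dominated by the variance while the tail is $O(e^{-\hat\beta/2})$ and hence, by (i), $O(\var^{IG})$, with compactness covering $\hat\beta\in(\beta,10]$; (c) for (iii), the observation $(2\pi)^2\beta\ge 4\pi^2/3>10$ and $\pi^2/4>2$. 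One point worth making explicit: as literally stated for all $a\in\R$, item (i) is false (e.g.\ at $a=1$ the right-hand side is $\tfrac1{16}e^{\beta/2}$ while $\var^{IG}(1,\beta)=\var^{IG}(0,\beta)$ is exponentially small); your reduction to the fundamental domain $[0,1/2]$ via periodicity and the reflection $a\mapsto1-a$ proves the statement in the only form in which it is true and in which the paper ever uses it (in the definition of $M(\beta)$ and in the proof of Proposition~\ref{p.tilde_M}, where $a$ is always taken modulo $1$ near $1/2$). So you have correctly identified and repaired a small imprecision in the statement rather than introduced a gap.
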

\noindent
As discussed in Appendix B of \cite{GS2}, we expect that for all $a\in \R$
\begin{align}\label{e.monotonicity_variance}
&\var^{IG}(0,\beta)\leq \var^{IG}(a,\beta)\leq \var^{IG}(0.5, \beta)\,,
\end{align}
which in turn would imply $M(\beta) \sim 2(2\pi)^2 \beta e^{-\frac{(2\pi)^2} 2 \beta}$.

\subsection{A reminder on discrete differential forms on $\Z^4$.}
\label{ss.reminder}

In this subsection, we give a short presentation of discrete differential calculus based on \cite{Roland,Sourav} as well as our companion paper \cite{GS2}. These notions are rather classical, yet we include these here as our way of handling boundary conditions differ slightly from other references. Also we will provide a detailed description of the Laplacian operator on $1$-forms in Section \ref{s.Laplacian_1} which will be of central importance throughout this text. For further useful references, see \cite{FSrestoration,gross1983convergence,driver1987convergence,GP,Roland,DarioWu,Sourav,malin,cao}. 

\subsubsection{Graphs and $k$-cells.}\label{sss.graphs} Many of the results in this paper are not concerned with a specific lattice. However, in order to keep notations light, we will only work with the following two types of graphs $\Lambda$.
\begin{itemize}
	\item The infinite volume case $\Lambda= \Z^n$, in most cases with $n=4$.
	\item The finite volume cubes $\Lambda = \Lambda_j =[-j,j]^n  \subseteq \Z^n$. 
\end{itemize} 
For each $0\leq k \leq n$,  \textbf{$k$-cells} of these graphs are obtained as the non-trivial intersection of $n-k+1$ unitary hyper-cubes\footnote{In contrast to the context of \cite{GS2}, we will not consider the complement of $[-j,j]^{n}$ as an $n$-cell.}. We consider $k$-cells as oriented objects (with either positive or negative orientation, see below) and we shall denote by $\overrightarrow C^k=\overrightarrow C^k(\Lambda)$ the set of $k$-cells of $\Lambda$ and by $C^k=C^k(\Lambda)$ as the non-oriented $k$-cells. 

Defining a suitable and consistent concept of orientations of $k$-cells (so that ultimately $\d^2=0$) is a rather delicate affair whose roots lie in the origins of differential exterior calculus. We will not make a self-contained presentation here. Instead we only briefly sketch below how it works and refer to \cite{gross1983convergence,driver1987convergence,Sourav} for more complete expositions (see also \cite{GP} for a more general way to define orientations). 
Let us fix $e_1,\ldots,e_n$ to be the canonical basis of $\Z^n$, which we view in this paragraph as oriented edges. If one considers a non-oriented $k$-cell which is based, say at some $x\in \Z^n$ and is spanned by $k$ vectors of the basis $v_1,\ldots,v_k=e_{i_1},\ldots,e_{i_k}$ with $i_1<\ldots < i_k$, then we have two possible oriented $k$-cells associated to it:
\bi
\item the positive cell $w$ corresponding in exterior diff. notations to $(v_1 \wedge v_2 \ldots \wedge v_k)_x$
\item its negative (or inverse) cell, which will be denote $w^{-1}$ and which corresponds to $-(v_1\wedge v_2 \ldots  \wedge v_k)_x$
\ei
To any such $k$-cell $w$, one define its \textbf{boundary} $\p w$ by 
\[
\p w = \sum_{\eps \in \{0,1\}} \sum_{j=1}^k (-1)^{\eps+j} (v_1 \wedge v_2 \wedge \ldots \wedge \hat v_j \wedge \ldots \wedge v_k)_{x+\eps v_j} 
\]
(where the singled out basis vector is to be omitted). The boundary $\p w$ may either refer to this formal sum (which will correspond below to a $k-1$-form) or to the collection of the $2k$ cells $(v_1 \wedge v_2 \wedge \ldots \wedge \hat v_j \wedge \ldots \wedge v_k)_{x+\eps v_j}$ equipped with their respective orientation $(-1)^{\eps+j}$. In fact, to simplify the notation we say that a $k-1$ cell $v$ belongs to a $k$-cell $w$ if $v$ is in $\partial w$.

Let us now describe certain types of $k$-cells
\begin{itemize}
	\item \textbf{0-cell} are the vertices of $\Lambda$. They are oriented positively or negatively. 
	\item \textbf{1-cell} are the oriented edges of $\Lambda$. They contain a positively oriented vertex and a negatively oriented one.
	\item \textbf{higher-dimensional cells.} See Figure \ref{f.kCell} for an illustration of the $3$-cell $(e_1\wedge e_2 \wedge e_3)_0$ oriented positively.
\end{itemize}
\begin{figure}[!htp]
\begin{center}
\includegraphics[width=\textwidth]{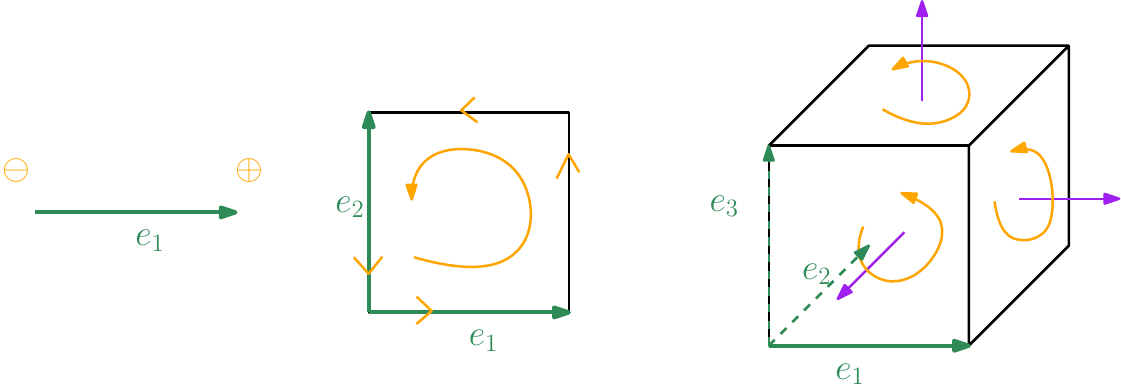}
\end{center}
\caption{}\label{f.kCell}
\end{figure}

For convenience (in particular when dealing with scalar-products), we identify the non-oriented $k$-cells in $C^k=C^k(\Lambda)$ with the positively oriented cells in $\overrightarrow C^k(\Lambda)$. (Note that once a basis $e_1,\ldots, e_n$ is fixed, this indeed singles out this way half of the $k$-cells).

Finally, when $\Lambda=\Lambda_j=[-j,j]^n\cap \Z^n$, we will say that a $k$-cell belongs to $\partial C^k$, the boundary of the graph $\Lambda$, if the whole $k$-cell is contained in the boundary of $[-j,j]^n$.

\subsubsection{Discrete differential calculus.}
We recall in this subsection the concepts of differential form and exterior derivative. 

 \begin{definition}[$k$-form] A function $f:\overrightarrow C^k\mapsto \R$ is a $k$-form if for all oriented $k$-cell $w$, we have that
 	\begin{align*}
 	f(w)=-f(w^{-1}).
 	\end{align*}
 	
 	We call $\Omega^k$ the set of $k$-forms, and $\Omega^k_\Z$ the set of integer-valued $k$-forms. Furthermore, we call $\mathring \Omega^k$ the set of $k$-form that take values $0$ in $\partial C^k$, and $\mathring \Omega^k_\Z$ the set of integer-valued $k$-forms taking $0$ value in $\partial C^k$. Note that $\Omega^n=\mathring \Omega^n$.
 \end{definition} 
The set of $k$-forms $\Omega^k$ is equipped with the following inner-product (which extends to $\mathring \Omega^k$).  For any $f_1, f_2 \in \Omega^k$,
\begin{align}\label{e.intern_product}
\langle f_1, f_2 \rangle := \frac{1}{2}\sum_{\overrightarrow w \in \overrightarrow{C}^k} f_1(\overrightarrow w) f_2(\overrightarrow w)= \sum_{ w \in C^k}f_1(w) f_2(w)\,, 
\end{align}
where recall that we identified above $C^k$ with the subset of $\overrightarrow{C}^k$ made of positively oriented cells.

We now define an operator $\d$, the \textbf{discrete exterior derivative}, that transforms a $k$-form into a $k+1$-form, in the following way:
 for $k\leq n-1$, $f\in \Omega^k$  and $\omega\in \overrightarrow{C}^{k+1}$
\begin{align*}
&\d f (w) = \sum_{v \in w} f(v).
\end{align*}
For $f\in \Omega^n$,
we set $\d f =0$. Note that $\d$ can be seen both as a linear operator from $\Omega^{k}$ to $\Omega^{k+1}$ 
as well as from $\mathring \Omega^{k}$ to $\mathring \Omega^{k+1}$. 

As $\d$ is a linear function, it can be thought of as a matrix. In this context, we define\footnote{The choice of the minus sign is because we want our Laplacian to be negative definite, as in analysis.} $\d^*: \Omega^k \mapsto \Omega^{k-1}$ as $-\d^t$. That is to say, for any $k$-form $f$ with $k\geq 1$ and any $w\in \overrightarrow C^{k-1}$
\begin{align*}
&\d^*f(w)=-\sum_{ v\ni w} f(v).
\end{align*}
We define $\d^* f=0$ for $f\in\Omega^0$. Furthermore, we also define $\mathring \d^*: \mathring \Omega^{k}\mapsto \mathring\Omega ^{k-1}$ as $-\d^t$, but this time the transpose is taken in the space of forms with $0$-boundary. This means that for any $\mathring f \in \mathring\Omega^{k}$ and $w\in \overrightarrow C^{k-1}$
\begin{align*}
\mathring \d^* \mathring f (w) :=\begin{cases}
-\sum_{v\ni w} \mathring f(v)&\text{ if } w\notin \partial \overrightarrow C^{k-1}\\
0 & \text{ if } w\in \partial \overrightarrow C^{k-1}
\end{cases}
\end{align*}
Note that for generic $\mathring f\in\mathring\Omega^k$ and $w\in \partial \overrightarrow{C}^{k-1}$ one has $\mathring \d^*\mathring f (w) \neq \d^* \mathring f(w)$ as there may be $v\in \overrightarrow{C}^{k}\backslash \partial \overrightarrow{C}^{k}$ such that $w\in v$.

The main usefulness of the operators $\d$ and $\d^*$ is given in the following classical proposition. (As explained below, it can be found for example in \cite{Sourav}).
\begin{proposition}\label{p.basic_calculus}The following statements are true for finite graphs $\Lambda\subseteq \Z^n$
	\begin{enumerate}
		\item $\d \d=0$.  In particular for all $1\leq k \leq n-1$, if $f\in \Omega^{k-1}$ (resp  $f\in \mathring\Omega^{k-1}$) and $g\in \Omega^{k+1}$ (resp. $\mathring g \in \mathring \Omega^{k+1}$) 
\begin{equation*}
\langle \d f, \d^*g\rangle =0 \text{ and } \langle \d \mathring f, \mathring \d^* \mathring g\rangle =0
\end{equation*}
		\item If $k\geq 1$ and $f\in \Omega^k$ (resp. $\mathring f\in \mathring \Omega^k$) is such that $\d f =0$ (resp. $\d \mathring f=0$), then there exists $g\in \Omega^{k-1}$ (resp. $\mathring g \in \mathring\Omega^{k-1}$) such that $\d g= f$  (resp. $\d \mathring g= \mathring f$).
	
		\item If $k\leq n-1$ and $f\in \Omega^k$ (resp. $\mathring f\in \mathring \Omega^k$) is such that $\d^* f =0$ (resp. $\mathring \d^* \mathring f(w)=0$), then there exists $g\in \Omega^{k+1}$ (resp. $\mathring g \in \mathring\Omega^{k+1}$) such that $\d^* g= f$  (resp. $\mathring \d^* \mathring g(w)= \mathring f(w)$). 
	\end{enumerate}	
\end{proposition}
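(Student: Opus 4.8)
The plan is to reduce everything to two inputs --- the purely combinatorial identity $\d\d=0$ and the (co)homological triviality of a solid box --- and then to deduce the $\d^*$-statements from the $\d$-statements by a finite-dimensional duality argument.

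First I would prove (1). The identity $\d\d=0$ is checked directly from the definition of $\p$: for an oriented $(k+1)$-cell $w$ and a $(k-1)$-subcell $v$ of it, $v$ appears exactly twice in $\p\p w$, once through each of the two $k$-cells of $\p w$ containing it, and the two contributions carry opposite signs, so everything cancels. This is the standard fact at the root of exterior calculus (cf. \cite{Sourav}), and it applies verbatim to $\mathring\Omega^\bullet$ once one notes that $\d$ maps $\mathring\Omega^{k-1}$ into $\mathring\Omega^k$ (every face of a boundary cell is a boundary cell). The orthogonality relations are then immediate from $\d^*=-\d^{t}$ and $\mathring\d^*=-\d^{t}$, the transposes being taken for the inner product \eqref{e.intern_product} in $\Omega^\bullet$ and $\mathring\Omega^\bullet$ respectively: $\langle\d f,\d^*g\rangle=-\langle\d\d f,g\rangle=0$, and similarly with rings.

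Next, (2) is exactly the statement that the cohomology of the cochain complex $(\Omega^\bullet,\d)$ --- which is the cubical cochain complex of the solid cube $\Lambda_j=[-j,j]^n$ --- vanishes in degrees $1\le k\le n$, and that of $(\mathring\Omega^\bullet,\d)$ --- the cochain complex of $\Lambda_j$ relative to its boundary $\partial\Lambda_j$ --- vanishes in degrees $1\le k\le n-1$. Since the geometric realization of $\Lambda_j$ is a closed ball (hence contractible, with $\partial\Lambda_j$ realizing a sphere), these vanishings are standard; I would either cite the comparison of cubical with singular (co)homology, or, to keep things self-contained, exhibit an explicit contracting homotopy $h\colon\Omega^k\to\Omega^{k-1}$ with $h\d+\d h=\mathrm{id}$ for $k\ge1$. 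The latter is the discrete Poincar\'e lemma and can be obtained by induction on $n$ using the product decomposition $\Lambda_j=[-j,j]^{n-1}\times[-j,j]$ together with a ``summation along the last coordinate'' operator; the same operator preserves forms vanishing on the lateral boundary, which gives the $\mathring{}$ version.

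Finally, (3) follows from (1)--(2) with no further work: since $\d^*=-\d^t$ and $\langle\cdot,\cdot\rangle$ is non-degenerate, one has $\ker(\d^*\colon\Omega^k\to\Omega^{k-1})=\bigl(\operatorname{im}(\d|_{\Omega^{k-1}})\bigr)^{\perp}$ and $\operatorname{im}(\d^*\colon\Omega^{k+1}\to\Omega^k)=\bigl(\ker(\d|_{\Omega^k})\bigr)^{\perp}$; by (2) the subspaces $\operatorname{im}(\d|_{\Omega^{k-1}})$ and $\ker(\d|_{\Omega^k})$ agree for $1\le k\le n-1$, hence so do their orthogonal complements, which is precisely the claim. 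The $\mathring\d^*$ case is identical with transposes taken in $\mathring\Omega^\bullet$. I do not expect a serious obstacle; the only delicate point is the bookkeeping with boundary conditions --- keeping $\d^*$ and $\mathring\d^*$ (equivalently, absolute versus relative cohomology) straight and tracking which degree ranges are covered, e.g. the free case of (2) extends to $k=n$ because there is no $(n+1)$-cell, whereas the relative case stops at $k=n-1$.
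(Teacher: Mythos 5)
Your proposal is correct, but it is worth recording how it differs from what the paper does, since the paper's ``proof'' of Proposition \ref{p.basic_calculus} is essentially a citation: item (1) is Lemma 2.1 of \cite{Sourav}, item (2) is Lemma 2.2 of \cite{Sourav}, and item (3) is deduced from item (2) by passing to the discrete Hodge dual lattice (Section 2.6 of \cite{Sourav}). You instead (a) sketch an actual proof of (2) via vanishing of the (absolute, resp.\ relative) cubical cohomology of the solid box, i.e.\ a discrete Poincar\'e lemma with an explicit contracting homotopy, and (b) derive (3) from (1)--(2) by pure linear-algebra duality in the finite-dimensional inner product spaces $\Omega^\bullet$, $\mathring\Omega^\bullet$: $\ker(\d^*|_{\Omega^k})=(\operatorname{im}\d|_{\Omega^{k-1}})^{\perp}$ and $\operatorname{im}(\d^*|_{\Omega^{k+1}})=(\ker\d|_{\Omega^k})^{\perp}$, which coincide by (2). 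This second point is a genuine simplification over the Hodge-dual route: under Hodge duality free and zero boundary conditions get swapped, so deducing (3) for $\Omega^k$ from (2) actually requires (2) for $\mathring\Omega^{n-k}$ on the dual lattice and vice versa, whereas your transpose argument keeps each boundary condition self-contained and needs no new geometric construction. The only caveat, which you already half-flag, is that your derivation of (3) uses (2) in degree $k$ and therefore only covers $1\le k\le n-1$ rather than the nominal range $k\le n-1$; this is harmless since the $k=0$ free-boundary case of (3) is in fact degenerate (constant $0$-forms lie in $\ker\d^*$ but not in $\operatorname{im}\d^*$) and the paper explicitly excludes $k=0$ and $k=n$ in Remark \ref{r.casek=0,n}. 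If you wanted to make the write-up fully self-contained you would still need to carry out the inductive construction of the contracting homotopy and check it preserves $\mathring\Omega^\bullet$, but the outline is sound.
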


\noindent
{\em Proof.}
	\begin{enumerate}
		\item  This result is Lemma 2.1 of \cite{Sourav}.
		\item This result is Lemma 2.2 of \cite{Sourav}.
		\item This result follows from the last item by using the discrete Hodge dual (see Section 2.6 of \cite{Sourav}). 
	\end{enumerate}
	


\begin{remark}\label{r.casek=0,n}
	Point (2) and (3) of the last proposition can also be studied for $k=0$ and $k=n$ respectively.
	\begin{itemize}
		\item If $f\in \Omega^0$ is such that $\d f=0$ then $f$ is a constant. Thus if $\mathring f \in \mathring \Omega^0$ is such that $\mathring \d \mathring f=0$, then $f$ has to be 0.
		\item  If $f \in \Omega^n$ is such that $\d^* f =0$ then $f=0$, 
		 however if $\mathring f\in \mathring\Omega^n=\Omega^n$ is such that $\mathring\d^*\mathring f=0$ we can only have that $\mathring f$ is constant.
	\end{itemize}
In the present work we do not use Proposition \ref{p.basic_calculus} in the cases where $k=0$ or $k=n$, and thus we will not extend this discussion. However, in some cases it may be useful to define a root vertex and a root $n$-cell which ones define to be $0$ as in \cite{GS2}.
\end{remark}

In this paper, we shall need the following improvement of the second point of the last proposition. We define the following equivalence class on $k$-forms taking values in the integers, i.e., on  $\Omega^k_\Z$
\begin{align*}
f_1 \mathcal R f_2 \text{ if } \d f_1 = \d f_2.
\end{align*}
We will denote by $[f]$ the equivalence class of $f$ under $\mathcal R$, and for $\mathring f \in \mathring \Omega^k_{\Z}$ we denote $[\mathring f]_{\ccirc}= [\mathring f] \cap \mathring \Omega^k_{\Z}$.

We also need to fix once and for all a deterministic function that satisfies the second point of Proposition \ref{p.basic_calculus}.
\begin{definition}\label{d.n_q}Take a finite graph $\Lambda\subseteq \Z^n$ and $k\geq0$. For any $q\in \Omega^{k+1}_\Z$ with $\d q=0$, we fix a deterministic function $n_q\in \Omega^{k}_\Z$ such that\footnote{The notation of $n_q$ is borrowed from  \cite{Roland} and \cite{GS2} to simplify the lecture of both papers.}  \[\d n_q =q,\]
	and such that if $\mathring q\in \mathring \Omega^{k}_\Z$ then $n_{\mathring q}\in \mathring\Omega^{k-1}_{\Z}$.
\end{definition}
The equivalence class and the definition above were introduced in order to state the following bijection.
\begin{proposition}\label{p.bijection}
	Take $k\geq 1$. There exists a bijection between the integer-valued $k$-forms $f\in \Omega^k_\Z$, and the Cartesian product of $k+1$-forms $q\in \Omega^{k+1}_{\Z}$ with $\d q=0$ and the equivalence classes of $k-1$-forms $[\psi]$. This bijection is given by
	\begin{align*}
	f= \d \psi' + n_{q}, \ \ \text{ for a } \psi'\in [\psi]. 
	\end{align*}
	Furthermore, this bijection is extended to the case where $\mathring f\in \Omega^k_{\Z}$, $\mathring q\in \mathring\Omega^{k+1}_{\Z}$  and $[\mathring \psi]_{\ccirc}$ is an equivalence class of $\mathring \Omega^{k-1}_{\Z}$.
\end{proposition}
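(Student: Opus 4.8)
The plan is to verify that the map $(q,[\psi]) \mapsto [\d\psi' + n_q]$ (with $\psi'$ any representative of $[\psi]$) is well-defined and bijective onto $\Omega^k_\Z$, by constructing an explicit two-sided inverse. First I would check that the map is well-defined, i.e.\ independent of the choice of representative $\psi' \in [\psi]$: if $\psi'_1 \mathcal R \psi'_2$ then $\d\psi'_1 = \d\psi'_2$ by the very definition of $\mathcal R$, so $\d\psi'_1 + n_q = \d\psi'_2 + n_q$; thus the value in $\Omega^k_\Z$ is genuinely attached to the pair $(q,[\psi])$. Note also that $\d(\d\psi' + n_q) = \d n_q = q$ by Proposition \ref{p.basic_calculus}(1) and Definition \ref{d.n_q}, so the output is a legitimate integer-valued $k$-form and $q$ is recovered as its exterior derivative.

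Next I would define the candidate inverse. Given $f \in \Omega^k_\Z$, set $q := \d f$; since $\d\d = 0$ (Proposition \ref{p.basic_calculus}(1)) we have $\d q = 0$ and $q \in \Omega^{k+1}_\Z$, so $n_q$ is defined. Now consider $f - n_q \in \Omega^k_\Z$. We have $\d(f - n_q) = q - q = 0$, so by Proposition \ref{p.basic_calculus}(2) there exists $\psi \in \Omega^{k-1}$ with $\d\psi = f - n_q$; one must check $\psi$ can be taken integer-valued, which follows because $f - n_q \in \Omega^k_\Z$ is an integer-valued closed form and the primitive produced by Lemma 2.2 of \cite{Sourav} is integer-valued when its input is (this is exactly the content one needs from that lemma; alternatively one invokes that $[f-n_q] \ni f - n_q$ determines a well-defined equivalence class). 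Assign to $f$ the pair $(q, [\psi])$. The class $[\psi]$ is well-defined: if $\d\psi_1 = \d\psi_2 = f - n_q$ then $\psi_1 \mathcal R \psi_2$ by definition. Then one checks the two compositions are identities: starting from $f$, forming $q = \d f$ and $\psi$ with $\d\psi = f - n_q$, the forward map returns $\d\psi + n_q = f$; starting from $(q,[\psi])$, forming $f = \d\psi' + n_q$, we get $\d f = q$ and $f - n_q = \d\psi'$, whose primitive class is exactly $[\psi']= [\psi]$. This establishes the bijection.

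Finally, for the zero-boundary version, I would run the identical argument inside $\mathring\Omega^\bullet_\Z$: the point is that $\d$ maps $\mathring\Omega^{k-1}_\Z$ into $\mathring\Omega^k_\Z$, that Definition \ref{d.n_q} was arranged precisely so that $\mathring q \in \mathring\Omega^{k+1}_\Z$ implies $n_{\mathring q} \in \mathring\Omega^{k-1}_\Z$, and that Proposition \ref{p.basic_calculus}(2) holds in the $\mathring{\phantom{f}}$-category; hence $f - n_{\mathring q} \in \mathring\Omega^k_\Z$ is closed with a primitive $\mathring\psi \in \mathring\Omega^{k-1}_\Z$, and $[\mathring\psi]_{\ccirc}$ is the associated equivalence class. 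The only genuinely delicate point in the whole argument — and the one I would be most careful about — is the integrality of the primitive: one must make sure that the fixed choice $n_q$ together with Lemma 2.2 of \cite{Sourav} really does produce an \emph{integer-valued} $\psi$, since a priori solving $\d\psi = f - n_q$ over $\R$ need not give an integer solution. This is exactly why $n_q$ is chosen integer-valued in Definition \ref{d.n_q}, and why the statement is phrased with the equivalence relation $\mathcal R$ on $\Omega^k_\Z$ rather than on $\Omega^k$; everything else is bookkeeping with $\d\d = 0$.
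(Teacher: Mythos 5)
Your proposal is correct and follows essentially the same route as the paper: define $q=\d f$, observe $\d(f-n_q)=0$, and invoke the integer-valued Poincar\'e lemma (Proposition \ref{p.basic_calculus}(2) via Lemma 2.2 of \cite{Sourav}) to produce the class $[\psi]$, with injectivity/well-definedness following from $\d\psi_1=\d\psi_2 \Leftrightarrow \psi_1\mathcal{R}\psi_2$. Your explicit flag that the primitive must be \emph{integer}-valued (which holds because Chatterjee's lemma is stated for forms valued in an abelian group) is a point the paper's proof passes over silently, but the argument is the same.
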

\begin{proof}
	We start working with the case of free-boundary condition. We see that the function $F:([\psi],q)\mapsto f$ is a bijection. We first see that it is an injection assume \begin{align*}
F([\psi],q)=f = \tilde f = F([\tilde \psi],\tilde q).
	\end{align*}
	We have that $\d f = \d \tilde f$ and thus, $q = \tilde q$. This implies that $n_q= n_{\tilde q}$ and thus $\d \psi' = \d \tilde \psi'$, from where we conclude that $[\psi]=[\tilde \psi']$.
	
	To prove that $F$ is surjective, we just need to take $f\in \Omega^{k}_\Z$ and define $q=\d f$. Noting that
	$\d(f-n_q)=0$ we have  that, thanks to Proposition \ref{p.basic_calculus}, there exists $\psi\in \Omega^k_\Z$ such that
	\begin{align*}
	\d \psi = f-n_q,
	\end{align*}
	from where we conclude.
	
	The same proof works for the case of $0$-boundary conditions.
\end{proof}

Finally, let us define the Laplacian operator on $\Omega^k$ and $\mathring \Omega^k$ as
\begin{align*}
\Delta= \d \d^*+ \d^*\d\ \ \  \text{ and } \ \ \ \mathring \Delta = \d \mathring \d^* + \mathring \d^* \d.
\end{align*}
Notice that the Laplacian commutes both with $\d$ and $\d^*$, and $\d$ and $\mathring \d^*$ respectively

 We state the following well-known result concerning the Laplacian operator.
\begin{proposition} Let $\Lambda\subseteq \Z^n$ be a finite graph. If $0<k<n$, then the Laplacian operator is (strictly) negative definite on $\Omega^k$. In particular, the Laplacian is an invertible operator. Furthermore, the same is true for $\mathring \Delta$ on $\mathring \Omega^k$.
\end{proposition}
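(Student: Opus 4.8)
The plan is to prove that $\Delta = \d\d^* + \d^*\d$ is strictly negative definite on $\Omega^k$ for $0 < k < n$ by combining the standard adjointness identity with the exactness/co-exactness statements of Proposition~\ref{p.basic_calculus}. First I would note that for any $f \in \Omega^k$, one has
\begin{align*}
\langle \Delta f, f\rangle = \langle \d\d^* f, f\rangle + \langle \d^*\d f, f\rangle = -\langle \d^* f, \d^* f\rangle - \langle \d f, \d f\rangle = -\|\d^* f\|^2 - \|\d f\|^2 \le 0,
\end{align*}
using that $\d^* = -\d^t$ with respect to the inner product \eqref{e.intern_product}. So $\Delta$ is automatically negative semi-definite; the content is the strictness, i.e. that $\langle \Delta f, f\rangle = 0$ forces $f = 0$.

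Suppose then that $f \in \Omega^k$ with $\d f = 0$ and $\d^* f = 0$. From $\d f = 0$ and point (2) of Proposition~\ref{p.basic_calculus} (valid since $k \ge 1$), there is $g \in \Omega^{k-1}$ with $\d g = f$. From $\d^* f = 0$ and point (3) (valid since $k \le n-1$), there is $h \in \Omega^{k+1}$ with $\d^* h = f$. Then
\begin{align*}
\|f\|^2 = \langle f, f\rangle = \langle \d g, \d^* h\rangle = 0
\end{align*}
by point (1) of Proposition~\ref{p.basic_calculus} (the orthogonality $\langle \d g, \d^* h\rangle = 0$), so $f = 0$. This shows $\Delta$ has trivial kernel, hence is invertible, and together with the semi-definiteness it is strictly negative definite on the finite-dimensional space $\Omega^k$. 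The case of $\mathring\Delta$ on $\mathring\Omega^k$ is identical word for word: the identity $\mathring\d^* = -\d^t$ in the space of forms vanishing on $\partial C^\bullet$ gives $\langle \mathring\Delta f, f\rangle = -\|\mathring\d^* f\|^2 - \|\d f\|^2$, and the ``resp.'' clauses in all three points of Proposition~\ref{p.basic_calculus} provide $\mathring g \in \mathring\Omega^{k-1}$ with $\d\mathring g = f$ and $\mathring h \in \mathring\Omega^{k+1}$ with $\mathring\d^*\mathring h = f$ when $\d f = 0$ and $\mathring\d^* f = 0$, whence $\|f\|^2 = \langle \d\mathring g, \mathring\d^*\mathring h\rangle = 0$.

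The only mild subtlety — and the place I would be most careful — is the boundary-condition bookkeeping in the $\mathring\Delta$ case: one must check that $\mathring\d^*$ really is the adjoint of $\d$ \emph{as operators between the spaces $\mathring\Omega^\bullet$}, so that the integration-by-parts step produces no boundary contribution. This is exactly how $\mathring\d^*$ was defined (as $-\d^t$ with the transpose taken within the zero-boundary spaces), so the identity $\langle \d a, b\rangle = \langle a, \mathring\d^* b\rangle$ for $a \in \mathring\Omega^{k-1}$, $b \in \mathring\Omega^k$ holds by construction, and no extra work is needed. Everything else is linear algebra on finite-dimensional spaces, so there is no real obstacle; the proposition is essentially a corollary of Proposition~\ref{p.basic_calculus} and the definition of the inner product.
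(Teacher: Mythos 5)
Your proof is correct and follows essentially the same route as the paper: the adjointness identity $\langle \Delta f, f\rangle = -\|\d f\|^2 - \|\d^* f\|^2$ gives semi-definiteness, and then the exactness statements of Proposition~\ref{p.basic_calculus} combined with the orthogonality $\langle \d g, \d^* h\rangle = 0$ kill the kernel. Your extra remark on the boundary bookkeeping for $\mathring\d^*$ is a sensible precaution but, as you note, is already built into the definition.
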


\begin{proof}
	We first prove that $\Delta$ is negative definite and then we show that it is invertible. To do that, let us first note that
	\begin{align}
	\label{e.neg_def}\langle (\d \d^* + \d^* \d) f, f \rangle&= -\langle \d^* f, \d^*f \rangle-\langle \d f, \d f \rangle \leq 0.
	\end{align}
	Thus $\Delta$ is negative semi-definite. Furthermore, assume that $\Delta f=0$, to finish the proof of the proposition it sufices to show that $f=0$. To do that, note that if $\Delta f =0$, we have that thanks to \eqref{e.neg_def},  both $\d f=0$ and $\d^*f=0$. Thus, there exists $g^-$ and $g^+$ such that $\d g^- = \d^* g^+ = f$. This implies that
	\begin{align*}
	\langle f , f\rangle = \langle \d g^-, \d^* g^+\rangle =0,
	\end{align*}
	which allows us to conclude.
	
 The exact same proof works for $\mathring \Delta$ thanks to Proposition \ref{p.basic_calculus}. 
\end{proof}

\begin{remark}\label{r.laplacian0,n}
	In fact, the proof of this proposition can be directly extended for the case $k=0$ when one works with $\mathring \Omega^{0}$ and for the case $k=n$ for $\Omega^n$. However, the proof does not work directly for $k=0$ in $\Omega^0$ and for $k=n$ in $\mathring \Omega^{n}$. The reason is explained in Remark \ref{r.casek=0,n}, and again it can be solved by fixing a root edge and a root $n$-cell as in \cite{GS2}.
\end{remark}

\subsection{Relevant statistical physics models.}

\subsubsection{Villain $U(1)$-lattice gauge theory.}
Thanks to the previous Section, we may now rewrite our initial definition of the Villain model (in Definition \ref{d.Villain}) using the language of discrete differential calculus.  Following the new input from \cite{GS2}, we will at once make one step further by extending the classical definition of the  Villain $U(1)$-lattice gauge theory to a probability measure on   couplings $(\theta,m)$ where $m$ is a random 2-form and $\theta$ is a one-form whose marginal law corresponds to the Villain interaction of Definition \ref{d.Villain}. Here is the definition of this joint coupling (where we use the same notation $\P_\beta^{Vil}$ with a slight abuse of notations).

\begin{definition}[Villain $U(1)$-lattice gauge coupling]\label{d.Villain_final} Let $\Lambda\subseteq \Z^4$ be a finite graph. We say that a pair $(\theta,m)$ is a Villain $U(1)$-lattice gauge coupling with free boundary condition if $\theta \in \Omega^1$ is a $1$-form taking values in $[-\pi,\pi)$, $m$ is an integer-valued $2$-form in $\Omega_\Z^2$ and
\begin{align}\label{e.uvil}
\P_{\beta}^{\Vil}((d\theta,m))\propto e^{-\frac{\beta}{2}\langle \d \theta + (2\pi)m, \d \theta + (2\pi)m\rangle} d\theta.
\end{align}

A pair $(\theta,m)$ is a Villain $U(1)$-lattice gauge coupling with zero boundary condition if $\theta \in \mathring \Omega^1$ takes values in $[-\pi,\pi)$, $m\in \mathring \Omega^2_\Z$ and it also satisfies \eqref{e.uvil}.

\end{definition} 

The following straightforward proposition highlights the key fact that conditioned on $\theta$, $m$ is an inhomogeneous discrete white noise on the 2-cells. 
\begin{proposition}\label{pr.Villain}Let $(\theta, m)$ be a Villain $U(1)$-lattice gauge theory (with any boundary conditions). We have that conditionally on $\theta$ the collection of random variables $(m(f))_{f\in C^2}$ are independent. Furthermore, the law of $m(f)$ conditioned on $\theta$ is that of an IV-Gaussian random variable at inverse-temperature $(2\pi)^2\beta$ and centred at $-(2\pi)^{-1}\d\theta(f)$ (except when $f\in \p C^2$ and the model has zero boundary conditions. In this case, $m(f)$ is $0$).
\end{proposition}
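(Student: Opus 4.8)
The plan is to read off everything directly from the definition of the joint coupling in \eqref{e.uvil} by expanding the quadratic form cell by cell. First I would note that the density of $(\theta,m)$ with respect to $d\theta \otimes (\text{counting measure on } \Omega_\Z^2)$ is proportional to $\exp\bigl(-\tfrac\beta2 \langle \d\theta + 2\pi m, \d\theta + 2\pi m\rangle\bigr)$, and that by the very definition of the inner product \eqref{e.intern_product} on $2$-forms,
\[
\langle \d\theta + 2\pi m, \d\theta + 2\pi m\rangle = \sum_{f\in C^2} \bigl(\d\theta(f) + 2\pi m(f)\bigr)^2 .
\]
Hence the density factorizes as a product over $f\in C^2$ of terms $\exp\bigl(-\tfrac\beta2 (\d\theta(f) + 2\pi m(f))^2\bigr)$.

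Next I would condition on $\theta$: fixing $\theta$, the conditional law of $m$ is proportional (in the discrete variable $m\in\Omega^2_\Z$, or $\mathring\Omega^2_\Z$ in the zero-boundary case) to $\prod_{f} \exp\bigl(-\tfrac\beta2 (2\pi m(f) + \d\theta(f))^2\bigr)$, which is a product measure; therefore the $(m(f))_{f\in C^2}$ are conditionally independent. For a single $f$, writing $2\pi m(f) + \d\theta(f) = 2\pi\bigl(m(f) - (-(2\pi)^{-1}\d\theta(f))\bigr)$, the marginal conditional weight of $m(f)=k$ is proportional to $\exp\bigl(-\tfrac{(2\pi)^2\beta}{2}\bigl(k - (-(2\pi)^{-1}\d\theta(f))\bigr)^2\bigr)$, which is exactly the defining formula of $\mathcal N^{IG}\bigl(-(2\pi)^{-1}\d\theta(f),\, (2\pi)^2\beta\bigr)$ from Section~\ref{ss.IVG}. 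In the zero-boundary case one additionally observes that for $f\in\p C^2$ the $2$-form $m$ is constrained to vanish, so $m(f)=0$ deterministically; for $f\notin \p C^2$ the same computation applies verbatim since the constraint does not couple distinct interior coordinates.

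There is essentially no obstacle here — this is a one-line consequence of the product structure of the Villain weight once \eqref{e.intern_product} is applied. The only mild point of care is bookkeeping for the zero-boundary condition: one must check that restricting to $\mathring\Omega^2_\Z$ only kills the boundary coordinates of $m$ and leaves the interior coordinates free and uncoupled, which is immediate because the constraint ``$m(f)=0$ for $f\in\p C^2$'' is itself a product constraint. I would conclude by noting that the normalization of each single-$f$ conditional law is finite (the Gaussian-type sum over $k\in\Z$ converges), so the conditional product measure is genuinely a probability measure, which is all that is needed.
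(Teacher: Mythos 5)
Your proposal is correct and follows exactly the same route as the paper's own (very short) proof, namely writing the conditional law $\P_\beta^{Vil}(m\mid\theta)\propto\prod_{f\in C^2}\exp\bigl(-\tfrac\beta2(\d\theta(f)+2\pi m(f))^2\bigr)\delta_\Z(dm)$ and reading off the product structure and the IV-Gaussian form of each factor. Your additional remarks on the boundary constraint and on the finiteness of the single-cell normalization are correct but are left implicit in the paper.
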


\begin{proof}
		 The result comes directly from writing down the conditional law
		\begin{align*}
		\P_{\beta}^{Vil}(m \mid \d \theta )\propto \prod_{f \in C^2} \exp\left(-\frac{\beta}{2}(\d \theta(f) + 2\pi m(f))^2 \right) \delta_\Z(dm). 
		\end{align*}
\end{proof}

We now discuss the infinite volume limit of $(\theta,m) \sim \P_{\beta,\Lambda_j}^{\Vil}$.
As this is not necessary for our result, we only give a sketch of proof below.
\begin{proposition}\label{pr.IVL}
Let $\beta>0$ and $(\theta_j,m_j)$ be a Villain $U(1)$-lattice gauge theory in $\Lambda_j= [-j,j]^n \cap \Z^n$ with free boundary conditions  at inverse temperature $\beta$. Then as $j\to \infty$, $(\theta_j,m_j)$ converges in law to an infinite-volume $U(1)$-lattice gauge theory on $\Z^n$.
\end{proposition}

\noindent
{\em Sketch of proof.} 
As the conditional law of $m_j$ given $\theta_j$ is local, it is enough to obtain the infinite volume limit only for the $1$-form $\theta_j$. This is stated both for the Wilson and Villain interaction in \cite{FSrestoration} as a standard consequence of Ginibre's inequalities (\cite{Ginibre}). See also on this topic the works \cite{king1986u2,driver1987convergence} as well as the following two useful references: \cite{messager1978}  in the case of the standard $XY$ model on $\Z^2$ and the recent \cite{malin} which provides a clear and self-contained proof when the gauge group is a discrete Abelian group.  \qed

{\em N.B. The convergence to a (possibly different) infinite volume limit in the case of Dirichlet boundary conditions should also follow from the same Ginibre's inequality, but some further care may be needed as the most natural condition along the boundary to use Ginibre's inequality would be to require $\d\theta (f)=0$ on boundary plaquettes $f$ rather than $\theta (e) =0$ on boundary edges. In any case, by compactness, note that one can always state our results for any infinite volume subsequential limits.}


\subsubsection{The (standard) Gaussian free field on $1$-forms.}\label{ss.GFF}
The definition below is the discrete version of the so-called {\em free electromagnetic wave} (\cite{gross1983convergence,driver1987convergence}). 
\begin{definition}\label{d.SGFF}
The $\beta$-GFF on $1$-forms is the real-valued centred Gaussian process $\phi \sim \P_{\beta,\Lambda}^{\GFF}$ on $\Omega^1(\Lambda)$ for free-boundary condition and in  $\mathring \Omega^1(\Lambda)$ for $0$-boundary condition whose covariance matrix is given by 
\begin{align*}
\begin{cases}
\frac 1 \beta \, (-\Delta)^{-1} = \frac{1}{\beta}(- (\d\d^* + \d^*\d))^{-1}\, & \text{ for free-boundary condition},\\
\frac 1 \beta \, (-\mathring \Delta)^{-1} = \frac{1}{\beta}(- (\d \mathring \d^* + \mathring \d^*\d))^{-1}\, & \text{ for $0$-boundary condition},
\end{cases}
\end{align*}
 
In other words, we say that $\phi$ is a GFF on the $1$-forms if it belongs to either $\Omega^1$ (for free boundary condition) and $\mathring \Omega^1$ (for $0$-boundary condition) and whose probability distribution is given by
\begin{align}
\P_{\beta}^{\GFF}\left[d\phi \right]\propto\begin{cases}
 \exp\left(-\frac{\beta}{2}\langle \phi ,(-\Delta) \phi \right)d\phi & \text{ for free boundary condition,}\\
 \exp\left(-\frac{\beta}{2}\langle \phi ,(-\mathring \Delta) \phi \rangle\right) d\phi & \text{ for zero boundary condition}.
 \end{cases}
\end{align}
\end{definition}
See also \cite{FSrestoration, gross1983convergence,driver1987convergence,Roland}.

It turns out that the spin-wave which will naturally arise for $U(1)$ lattice gauge theory on $\Lambda \subset \Z^4$ is not quite this Gaussian process, but rather its push-forward image under $\d$ as we shall explain in the following section. This will be particularly important in Section \ref{s.decoupling}, see Remark \ref{r.ImpSpW}. 

\subsubsection{The  gradient spin-wave on 2-forms.}
In the case of the classical Villain model in $2D$, one may view the spin-wave either as a Gaussian field on the vertices (the GFF) or as a Gaussian field on 1-forms (the gradient of the GFF). In that case, both point of views happen to be equivalent. On the other hand, in the present setting both point of views are no longer equivalent as there is a lot of information lost when one takes the discrete differential $\d$ of a GFF. It is in fact this second choice which will lead to a decoupling of the spin-wave.

Recall that the Villain-$U(1)$ lattice gauge theory introduced in~\eqref{e.Villain_mar} is a periodized Gaussian on the set $[-\pi,\pi)^{\overrightarrow C^1(\Lambda)}$.   By getting rid of the periodization in~\eqref{e.Villain_mar} and taking its discrete differential $\d$, we obtain the following Gaussian process on 2-forms (which we call from now on the {\em gradient spin-wave}).
\begin{definition}[The gradient spin-wave on $2$-forms]\label{d.SW}
	We say that $\varrho$ is a gradient spin-wave on $2$-forms at inverse temperature $\beta$ if $\varrho \stackrel{law}{=} \d \phi$, where $\phi$ is a GFF on $1$-forms at inverse temperature $\beta$. The boundary condition of the gradient spin-wave is inherited from the boundary condition of the GFF.
\end{definition}

The main reason to study the gradient spin-wave instead of the GFF itself is that it will naturally appear when decoupling the angles of the Villain-$U(1)$ lattice gauge theory into a suitable spin-wave and a Coulomb gas below in Proposition \ref{p.decouplingF}. 
As opposed to the classical $2d$ Villain model (on $0$-forms), this does not imply in our case that there exists a decoupling at the level of $1$-forms involving the GFF itself. 

\section{Laplacian of $1$-forms}\label{s.Laplacian_1}

In this section, we study the Laplacian on $1$-forms. Our analysis could easily be extended to $k$-forms with $1\leq k\leq n-1$ but we will not need it. We start by  stating some basic results concerning the Laplacian on $0$-forms.
\subsection{Laplacian on $0$-forms: zero boundary condition.}\label{ss.Laplacian_0}

Recall the Laplacian on $0$-forms is not invertible on $\Omega^0$ as the constant functions are in the kernel of $\d$, however  $\mathring \Delta$ is invertible as a function on $\mathring \Omega^0$. 

The inverse of the Laplacian on $\mathring \Omega^0$ is called the Green's function, $G_{\Lambda_j}:=(-\mathring \Delta)^{-1}$. Furthermore as $\Lambda_j\nearrow \Z^n$ this solution converges to the classical Green's function $G$ on $\Z^n$.  We will need some control on the asymptotical behaviour of this Green's function. The following proposition will be sufficient for our needs and follows from classical estimates on Green functions in $\Z^n$ (see for example \cite{LawlerLimic}).
\begin{proposition}\label{p.Green_vertices}
	Let $G=(-\mathring \Delta)^{-1}$ be the Green's function of the Laplacian on the $0$-forms of $\Z^n$, for $n>2$. For any $x,y$ 
	\begin{align*}
	0<G(x,y) = C_n \|x-y\|^{-(n-2)} + O(\|x-y\|^{-n}).
	\end{align*}
\end{proposition}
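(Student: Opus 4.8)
The plan is to reduce the statement about the Green's function of the Laplacian on $0$-forms to the standard potential-theoretic estimates for the simple random walk on $\Z^n$, since the Laplacian $\mathring\Delta$ acting on $0$-forms is, after the identification of $0$-forms with functions on vertices, exactly the combinatorial graph Laplacian $\d\d^* + \d^*\d$ restricted to $\Omega^0$; on $0$-forms $\d^* = 0$, so $\mathring\Delta = \d^*\d$ is the usual nearest-neighbour (negative) Laplacian on $\Z^n$, i.e. $(-\mathring\Delta) f(x) = \sum_{y \sim x}(f(x) - f(y))$, which is $2n$ times $(I - P)$ where $P$ is the transition kernel of simple random walk. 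First I would make this identification explicit and note that, with our normalisation (dividing by the degree, as flagged in the footnote to Theorem~\ref{th.main}), $G = (-\mathring\Delta)^{-1}$ is $\tfrac{1}{2n}$ times the Green's function of $(I-P)^{-1} = \sum_{k\geq 0} P^k$, the expected number of visits.

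Next I would invoke the classical asymptotics of the random walk Green's function on $\Z^n$ for $n \geq 3$ (transient case): by the local central limit theorem and standard estimates (for instance Theorem~4.3.1 in \cite{LawlerLimic}), one has $G(0,x) = a_n \|x\|^{-(n-2)} + O(\|x\|^{-n})$ as $\|x\| \to \infty$, for an explicit dimensional constant $a_n$ coming from integrating the continuum heat kernel; absorbing the $\tfrac{1}{2n}$ factor into the constant gives the claimed form with $C_n = a_n/(2n)$ (up to the precise normalisation convention). Positivity $G(x,y) > 0$ for all $x,y$ is immediate since $G(x,y) = \tfrac{1}{2n}\sum_{k\geq 0} \P_x[S_k = y] \geq \tfrac{1}{2n}\P_x[S_0 = y]$ when $x = y$, and for $x \neq y$ there is a path of positive probability, so every entry of $\sum_k P^k$ is strictly positive. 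I would also remark that convergence of the finite-volume Green's function $G_{\Lambda_j} = (-\mathring\Delta)^{-1}$ on $\mathring\Omega^0(\Lambda_j)$ to $G$ as $\Lambda_j \nearrow \Z^n$ is the standard fact that the Green's function of the walk killed on exiting $\Lambda_j$ increases to the free Green's function (monotone convergence in $k$ of killed return probabilities), which is transient and finite for $n \geq 3$.

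Since essentially everything here is a citation to \cite{LawlerLimic}, there is no real obstacle; the only point requiring a line of care is bookkeeping the degree normalisation (degree $2n = 8$ in dimension $4$) so that the constant $C_n$ in the statement matches the one that later feeds into the definition \eqref{e.cGFF} of $C_{GFF}$. I would therefore write the proof as: (i) identify $-\mathring\Delta$ with $2n(I-P)$ on $0$-forms and hence $G$ with $\tfrac{1}{2n}\sum_{k\geq 0}P^k$; (ii) deduce strict positivity from positivity of walk return probabilities; (iii) quote the LCLT-based asymptotic expansion of the walk Green's function to get the stated error term; (iv) note the finite-volume-to-infinite-volume convergence is monotone convergence of killed walks. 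The main ``obstacle'', if any, is purely expository — making sure the reader sees that on $0$-forms the form-valued Laplacian is nothing but the ordinary graph Laplacian so that off-the-shelf random walk estimates apply verbatim.
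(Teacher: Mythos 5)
Your proposal is correct and follows exactly the route the paper takes: the paper gives no proof of Proposition~\ref{p.Green_vertices} and simply notes that it ``follows from classical estimates on Green functions in $\Z^n$ (see for example \cite{LawlerLimic})'', which is precisely your reduction of $-\mathring\Delta$ on $0$-forms to the degree-normalised random walk Laplacian followed by the LCLT asymptotics. Your added bookkeeping of the $\tfrac{1}{2n}$ normalisation and the positivity argument are correct elaborations of the same citation.
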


Furthermore, the following lemma compares the Green's function in $\Z^n$ with the Green's function in a smaller graph $\Lambda$.
\begin{lemma}\label{l.approximation}
For all points $x,y$ in a finite graph $\Lambda \subset \Z^n$ that are at $\ell^2$ distance more than $M$ from $\partial \Lambda$ we have 
	\begin{align*}
	G (x,y)-O\left (\frac{1}{M^{n-2}}\right ) \leq G_{\Lambda}(x,y) \leq G(x,y).
	\end{align*}
\end{lemma}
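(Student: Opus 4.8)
The plan is to compare the two Green's functions via a probabilistic representation using the simple random walk on $\Z^n$, together with the estimate for the Green's function on $\Z^n$ provided by Proposition \ref{p.Green_vertices}. Recall that $G_\Lambda(x,y)$ equals (up to the normalization by the degree $2n$) the expected number of visits to $y$ before the walk started at $x$ exits $\Lambda$, while $G(x,y)$ is the expected number of visits to $y$ over the whole (infinite) trajectory. Since $\Lambda$ is finite and $n>2$, the walk is transient, so both quantities are finite and the difference $G(x,y)-G_\Lambda(x,y)$ is precisely the expected number of visits to $y$ that occur \emph{after} the first exit time $\tau=\tau_{\partial\Lambda}$ of $\Lambda$.

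First I would set up this decomposition rigorously. Writing $X$ for the walk started at $x$ and using the strong Markov property at time $\tau$,
\begin{align*}
G(x,y)-G_\Lambda(x,y) = \E_x\left[\1_{\{\tau<\infty\}}\, G\bigl(X_\tau, y\bigr)\right] = \E_x\left[ G(X_\tau,y)\right]\,,
\end{align*}
where $X_\tau$ is the first vertex visited outside $\Lambda$ (or on the boundary, depending on the precise convention for $\partial\Lambda$; in any case a vertex whose $\ell^2$ distance to $x$ and to $y$ is at least $M$, since $x,y$ are at distance more than $M$ from $\partial\Lambda$ and $X_\tau$ sits essentially on $\partial\Lambda$). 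The left inequality $G_\Lambda(x,y)\le G(x,y)$ is then immediate since the right-hand side above is nonnegative (here one also uses positivity of $G$, Proposition \ref{p.Green_vertices}). For the lower bound on $G_\Lambda(x,y)$, it remains to bound $\E_x[G(X_\tau,y)]$ from above. Since $\|X_\tau-y\|\ge \|y - \partial\Lambda\| - O(1) \gtrsim M$ almost surely on the event that the walk exits, Proposition \ref{p.Green_vertices} gives $G(X_\tau,y) = C_n\|X_\tau-y\|^{-(n-2)} + O(\|X_\tau-y\|^{-n}) = O(M^{-(n-2)})$ deterministically, and therefore
\begin{align*}
0 \le G(x,y)-G_\Lambda(x,y) = \E_x\left[G(X_\tau,y)\right] = O\!\left(\frac{1}{M^{n-2}}\right)\,,
\end{align*}
which is exactly the claimed two-sided bound.

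The main technical point — really the only thing beyond bookkeeping — is to make sure the geometric estimate $\|X_\tau-y\|\ge cM$ is valid with the conventions in force: $y$ is at $\ell^2$-distance $>M$ from $\partial\Lambda$, and the exit location $X_\tau$ lies within bounded distance of $\partial\Lambda$ (for nearest-neighbour walk, exactly one step outside $\Lambda$), so by the triangle inequality $\|X_\tau - y\| \ge \|y-\partial\Lambda\|_{\ell^2} - O(1) > M - O(1)$, and for $M$ large this is $\ge M/2$, say; absorbing constants into the $O(\cdot)$ is harmless. One should also note that the monotonicity $G_\Lambda \le G$ in fact holds for \emph{all} $x,y$, not only those far from the boundary, so no hypothesis is needed there; the distance assumption is used solely in the upper bound on the error term. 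I do not anticipate a serious obstacle: the argument is the standard last-exit / first-exit decomposition for transient random walk, and all the analytic input (transience, positivity, and the polynomial decay of $G$) is already available from Proposition \ref{p.Green_vertices} and classical facts about the random walk on $\Z^n$ for $n>2$.
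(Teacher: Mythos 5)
Your proof is correct and is in substance the same as the paper's: the paper gets the upper bound from monotonicity of the Green's function and the lower bound by noting that $G(x,\cdot)-G_\Lambda(x,\cdot)$ is harmonic in the interior with boundary values $G(x,\cdot)$, hence bounded by $\sup_{z\in\partial\Lambda}G(x,z)=O(M^{-(n-2)})$. Your identity $G(x,y)-G_\Lambda(x,y)=\E_x[G(X_\tau,y)]$ is exactly the probabilistic (strong Markov) expression of that same harmonic-extension bound, and both arguments then close via the decay estimate of Proposition \ref{p.Green_vertices}.
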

{\em Proof.}
	The upper bound follows readily from the monotonicity of the Green's function. The lower bound is obtained from the fact that $G_\Lambda(x,\cdot)$ is the unique harmonic function in $\Lambda\backslash (\partial \Lambda\cup \{x\})$ that is $0$ in $\partial \Lambda$ and so that $\Delta (G_\Lambda(x,\cdot)) (x)=-1$, and thus 
	\begin{align*}
	G_{\Lambda}(x,y) \geq G(x,y)- \sup_{z} G(x,z).
	\end{align*}


\subsection{Laplacian on 1-forms.} Our goal in this subsection is to understand the behaviour of the Laplacian on the $1$-forms by relating it to the Laplacian on $0$-forms. To do this, we  introduce new graphs $(\G_i)_{i=1}^n$.

We start by considering the set of oriented edges of $\Z^n$, $\overrightarrow C^1(\Z^n)$. Take $\{e_i\}_{1\leq i \leq n}$ to be the canonical base of $\Z^n$ and let $\overrightarrow{e_i}$ be the edge going from $0$ to $e_i$. Now, we define the subset $E_i\subseteq \overrightarrow C^1(\Z^n)$ of oriented edges that have the same orientation as $\overrightarrow{e_i}$, i.e, $e\in E_i$ if $e=k+ \overrightarrow{e_i}$, with $k\in \Z^n$. Furthermore, we define a graph $\G_{i}$ whose vertices are given by $E_i$  and where there is an edge between $e,e' \in E_i$ if there exists $1\leq j \leq n$
such that
\begin{align}
e=e'\pm e_j.
\end{align}
This concludes the definition of the infinite graphs $(\G_i)_{i=1}^n$. In order to adapt the definition to a finite setting we discuss separately  both types of boundary conditions.

\subsubsection{Free-boundary conditions.} For a finite graph $\Lambda\subseteq \Z^n$, we define the graph $\mathcal G_i(\Lambda)$ as the subgraph of $\G_i$ generated by all the vertices (i.e. edges of the initial graph) of $\G_i$ that intersect $\Lambda$.
Furthermore, we define $\partial \G_i$ as all the vertices of $\G_i(\Lambda)$ that do not belong to $\overrightarrow C^1(\Lambda)$. (See Figure \ref{f.E_i}).

We shall also consider $\Delta_{\mathcal G_i(\Lambda)}$  the Laplacian on the vertices of $\mathcal G_i(\Lambda)$ with 0 boundary conditions on $\partial \mathcal G_i(\Lambda)$. I.e., for any function $f:\mathcal G_i(\Lambda)\mapsto \R$ taking values $0$ in $\partial \mathcal G_i(\Lambda)$ we define
\begin{equation}\label{e.Laplacian_Gi}
\Delta_{\mathcal G_i(\Lambda)} f (e) := \sum_{e'\sim e}f(e')-f(e), \ \ \ \ \forall e\in \mathcal G_i(\Lambda)\backslash \partial \mathcal G_i(\Lambda).
\end{equation}

\begin{figure}[h!]
	
	\includegraphics[width=0.4\textwidth]{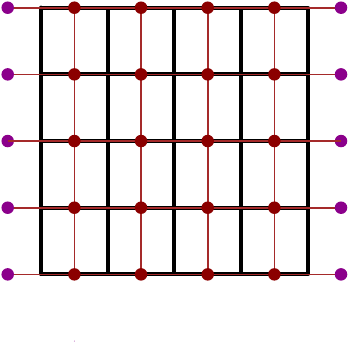}
	\caption{Representation of a finite graph $\Lambda\subseteq \Z^2$. The graph $\mathcal G_1(\Lambda)$ is represented by the dots, the interior of the graph is given by the red dots and the boundary is represented by the purple ones. }
	\label{f.E_i}
\end{figure}

\subsubsection{$0$-boundary conditions.}
Let us now study the finite graph $\Lambda \subseteq \Z^n$ with $0$-boundary condition. We define the graph $\mathring \G_i(\Lambda)$ as the subgraph of $\G_i$ generated by all vertices $e\in \G_i$ that belong to $\overrightarrow C^{1}(\Lambda)$. We define $\partial \mathring \G_i(\Lambda)$ as the set of edges of $\mathring \G_i(\Lambda)$ which belong to $\partial \overrightarrow C^{1}(\Lambda)$. (See Figure \ref{f.E_i_0}).
\begin{figure}[h!]
	
	\includegraphics[width=0.4\textwidth]{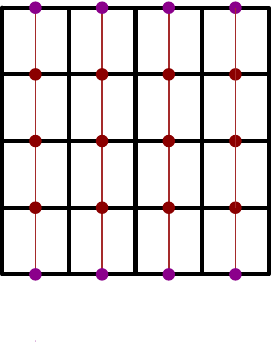}
	\caption{Representation of a finite graph $\Lambda\subseteq \Z^2$. The graph $\mathring\G_1(\Lambda)$ is represented by the dots, the interior of the graph is given by the red dots and the boundary is represented by the purple ones. }
	\label{f.E_i_0}
\end{figure}

We consider $\Delta_{\mathring \G_i(\Lambda)}$ the Laplacian on the vertices of $ \mathring \G_i(\Lambda)$ with 0-boundary conditions on  $\partial \mathring \G_i(\Lambda)$. I.e., for any function $f: \mathring \G_i(\Lambda)\mapsto \R$ taking values $0$ in $\partial \mathring \G_i(\Lambda)$ we define
\begin{equation}\label{e.Laplacian_Gi0}
\Delta_{\mathring \G_i(\Lambda)} f (e) := \sum_{e'\sim e}f(e')-f(e), \ \ \ \ \forall e\in \mathring \G_i(\Lambda)\backslash \partial \mathring \G_i(\Lambda).
\end{equation}

\subsection{Relationship between the Laplacian on $1$-forms and the Laplacian on $0$-forms.}Let us be more explicit with equations \eqref{e.Laplacian_Gi} and \eqref{e.Laplacian_Gi0}. Take an edge $e\in [-j+1,j-1]^n\cap \Z^n$, and note that \eqref{e.Laplacian_Gi} becomes in this case
	\begin{equation}\label{e.Laplacian_int}
	\Delta_{\mathcal G_i(\Lambda)} f (e) = \sum_{\sigma\in \{-1,1\}} \sum_{l=1}^n f(e+\sigma e_l )- f(e).
	\end{equation}
	
	We have to be more careful in the case where the edge $e$ intersects $\partial [-j,j]^n$. Assume first that $e$ intersects $\partial [-j,j]^n$ but it is not contained in it. In this case, we have that either $e+e_i$ or $e-e_i$ does not belong to $\Lambda_j$. This case also comes back to \eqref{e.Laplacian_int} by recalling that $f(e+e_i)$ or $f(e-e_i)$ has to be $0$.
	
	Finally, we study the case where $e\subseteq \partial [-j,j]^n$, this case is only relevant for the free-boundary condition. Here we have that for some $l\neq i$, either $e+e_l$ or $e-e_l$ do not belong to the boundary. We then have  that
	\begin{align*}
		\Delta_{\mathcal G_i(\Lambda)} f (e) = \sum_{\sigma\in \{-1,1\}} \sum_{l=1}^n (f(e+\sigma e_l )- f(e)) \1_{e+ \sigma e_l  \in \G_i(\Lambda)}.
	\end{align*}

The same description applies to the case with $0$-boundary condition. 
The graphs we defined above are important thanks to the following result. (Recall the definition of $\mathring \Omega^1(\Lambda)$ from Subsection \ref{sss.graphs}).
\begin{proposition}
	Let $\Lambda\subseteq \R^n$ be a finite graph, $ f\in \Omega^1(\Lambda)$ and $\mathring f\in \mathring \Omega^1(\Lambda)$. We define the functions $f_i:\G_i\mapsto \R$ and $\mathring f_i:\mathring \G_i\mapsto \R$  as follows
	\begin{align*}
	&f_i(e)= f(e) \1_{e \in \overrightarrow C^{1}(\Lambda )},\\
	&\mathring f_i (e)= \mathring f(e) \1_{e \in \overrightarrow C^{1}(\Lambda )}.
	\end{align*}
	We then have that 
	\begin{align*}
	\Delta f(e) = \Delta_{\G_i(\Lambda) } f_i(e),& \text{\,\,   for any $e \in \G_i \backslash \partial \G_i$,}\\
	\mathring \Delta \mathring f (e) = \Delta_{\mathring\G_i(\Lambda) } \mathring f_i(e), &\text{\,\,   for any $e \in \mathring \G_i \backslash \partial \mathring \G_i$.}
	\end{align*}
\end{proposition}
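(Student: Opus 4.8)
The plan is to unwind both sides of the claimed identity cell-by-cell and show that the combinatorics of the incidence relations defining $\Delta = \d\d^* + \d^*\d$ on $1$-forms reproduce exactly the nearest-neighbour structure of the graph $\G_i$. The first step is to compute $\Delta f(e)$ directly from the definitions of $\d$ and $\d^*$. Writing $e = x + \overrightarrow{e_i}$, the term $\d^*\d f(e) = -\sum_{w \ni e} \d f(w)$ involves the $2$-cells (plaquettes) containing $e$; each such plaquette $w$ is spanned by $e_i$ and some $e_l$ with $l \neq i$, and $\d f(w)$ is an alternating sum of the four edges of $w$. Likewise $\d\d^* f(e) = (\d g)(e)$ where $g = \d^* f$ is a $0$-form, and this expands into a sum over the two endpoints of $e$ of the net flux of $f$ through the edges incident to those endpoints. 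I would carry out this bookkeeping carefully, keeping track of signs coming from the orientation conventions in Section \ref{sss.graphs}.

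The key algebraic observation to isolate is the following cancellation: when one sums $\d\d^* f(e) + \d^*\d f(e)$, the contributions of edges \emph{not} parallel to $e_i$ (i.e. edges $e \pm e_i$-translates living in the other $E_j$'s) cancel pairwise, because each such edge appears once through a plaquette term (in $\d^*\d$) and once through a vertex term (in $\d\d^*$) with opposite signs — this is precisely the discrete manifestation of $\d^2 = 0$ / the Hodge decomposition restricted to one "direction slice." What survives is a sum over edges of the form $e + \sigma e_l$ for $\sigma \in \{-1,1\}$ and $1 \le l \le n$ (the $l = i$ terms coming from the $\d\d^*$ piece along the axis of $e$, the $l \neq i$ terms coming from the two plaquettes in each transverse direction), minus $2n$ times $f(e)$ — and after the pairwise cancellations the coefficient collapses to exactly the form in \eqref{e.Laplacian_int}. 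This matches $\Delta_{\G_i(\Lambda)} f_i(e)$ by definition of adjacency in $\G_i$.

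Once the interior identity is established, the boundary cases are handled by the paragraph-by-paragraph discussion already given in the text before the statement: for $e$ intersecting but not contained in $\partial[-j,j]^n$, the indicator $\1_{e \in \overrightarrow C^1(\Lambda)}$ in the definition of $f_i$ forces the relevant neighbour terms to vanish, which is exactly what the free-boundary Laplacian $\Delta$ on $\Omega^1(\Lambda)$ does (a missing plaquette or edge simply does not contribute); for $e \subseteq \partial[-j,j]^n$ one uses the restricted-sum formula displayed just above, and again the two sides agree term by term. For the $0$-boundary statement the argument is identical with $\d^*$ replaced by $\mathring\d^*$ and $\G_i$ by $\mathring\G_i$; the only change is that $\mathring f$ already vanishes on $\partial\overrightarrow C^1(\Lambda) = \partial\mathring\G_i$, so no indicator truncation is needed and one invokes the relevant clause of Proposition \ref{p.basic_calculus}.

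I expect the main obstacle to be purely organizational rather than conceptual: getting every orientation sign right in the expansion of $\d\d^*$ and $\d^*\d$, and verifying that the transverse-edge contributions genuinely cancel in pairs with the correct signs (rather than, say, adding up or cancelling against the wrong partner). A clean way to avoid sign errors is to exploit that $\d^2 = 0$ already tells us $\langle \d\d^* f, \d\d^* f\rangle$-type cross terms vanish, or more directly to note that for a $1$-form the Laplacian has the schematic "vector-Laplacian" form in which the off-diagonal (direction-mixing) entries cancel; but since the paper wants an explicit matching with $\G_i$, the honest route is the direct computation, and the care lies entirely in the incidence/orientation bookkeeping.
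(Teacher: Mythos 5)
Your proposal is correct and follows essentially the same route as the paper's proof: a direct expansion of $\Delta f(e) = \d\d^* f(e) + \d^*\d f(e)$ into plaquette and vertex sums, the observation that the transverse (non-$E_i$) edges cancel pairwise between the two sums while the surviving terms $f(e+\sigma e_l) - f(e)$ reproduce the nearest-neighbour Laplacian of $\G_i(\Lambda)$ with $-2nf(e)$ split as $2(n-1)$ face contributions plus $2$ vertex contributions, followed by the same three-case boundary analysis. The only point to watch when writing it out is the zero-boundary case at an edge touching $\partial[-j,j]^n$, where one must use that $\mathring\d^*\mathring f$ vanishes at the boundary vertex so that the coefficient drops to $-(2n-1)f(e)$, matching the reduced degree of $e$ in $\mathring\G_i(\Lambda)$ — but this falls out of the bookkeeping you describe.
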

\begin{proof}
	In this proof, we work with both cases simultaneously. Furthermore, without loss of generality we take $i=n$ and we denote $e=\overrightarrow{v_1v_2}$.

	Now, we study three different cases as in the beginning of this subsection according to where the edge $e$ is. 
	\begin{enumerate}[(1)]
\item  The first case is when $e$ is an edge of $\Lambda_{j-1}$. In this case, we have using $\Delta=\d\d^*+\d^*\d$ that 
	\begin{align}
	\label{e.Laplacian_1_form}\mathring \Delta f (e)=\Delta f (e) &= -\sum_{F\ni e}\sum_{e'\in F} f(e') - \sum_{e'\ni v_2}f(e') + \sum_{e' \ni v_1} f(e')\,,
	\end{align}
where, with a slight abuse of notations, $\sum_{F\ni e}$ stands for the sum over oriented $2$-cells $F$ which include the oriented edge $e$ in their boundary $\p F$ and similarly $\sum_{e'\ni v_1}$ stands for the sum over oriented edges $e'$ whose positive vertex is $v_2$. We now claim 
that \eqref{e.Laplacian_1_form} is equal to
	\begin{align}
	-2n f(e) +\sum_{\sigma\in \{-1,1\}} \sum_{l=1}^{n}f(e+\sigma e_l)  
	\nonumber&=\sum_{\sigma\in \{-1,1\}} \sum_{l=1}^{n}(f(e+\sigma e_l)- f(e)).
	\end{align}
We leave it to the reader to check this identity. Let us just explain in a few words the term $-2n f(e)$. One contribution comes from the $2(n-1)$ oriented faces $F$ which are such that $e\in \p F$. Each of these contribute one $-f(e)$. The remaining $-2f(e)$ comes from the edge $e'=e$ which appear twice in  $- \sum_{e'\ni v_2}f(e') + \sum_{e' \ni v_1} f(e')$. 
	
\item  The second case is when $e$ intersects $\partial [-j,j]^n$ but it is not contained in it. WLOG let us assume that $e+e_n$ does not belong to $\Lambda_j$,

	\textit{Free-boundary.} In this case, we still have 
	\begin{align}\label{e.Laplacian_1_form_b1}
	\Delta f (e) &= -\sum_{F\ni e}\sum_{e'\in F} f(e')- \sum_{e'\ni v_2}f(e') + \sum_{e' \ni v_1} f(e').
	\end{align}
	Let us note that we have $2(n-1)$ faces that contain $e$ but there is one less horizontal edge than in the previous case. Thus, we have that \eqref{e.Laplacian_1_form_b1} is equal to
	\begin{align}
	-2n f(e)+ f(e-e_n) +\sum_{\sigma\in \{-1,1\}} \sum_{l=1}^{n-1}f(e+\sigma e_l)  
	\nonumber&=\sum_{\sigma\in \{-1,1\}} \sum_{l=1}^{n}(f(e+\sigma e_l)- f(e)),
	\end{align}
	where we take $f(e+e_n)$ to be equal to $0$.
	
	\textit{Zero-boundary.} In this case $v_2 \in \partial [-j,j]^n$, and thus using that $\mathring \d^*\mathring f (v_2)=0$, we see that
	\begin{align}\label{e.Laplacian_1_form_b1z}
	\mathring \Delta \mathring f (e) &= -\sum_{F\ni e}\sum_{e'\in f} \mathring f(e') + \sum_{e' \ni v_1} \mathring f(e').
	\end{align}
	Using that the difference between \eqref{e.Laplacian_1_form_b1z} and \eqref{e.Laplacian_1_form_b1} has one additional term of $f(e)$, we have that \eqref{e.Laplacian_1_form_b1} is equal to
	\begin{align}
	-(2n-1) f(e)+ f(e-\sigma_n) +\sum_{\sigma\in \{-1,1\}} \sum_{l=1}^{n-1}f(e+\sigma e_l)  
	\nonumber
	\end{align}
	which is what we wanted. 
\item Finally, the last case is when $e$ is contained in $\partial [-j,j]^n$. This case is only relevant for free boundary conditions.  In this case, we know that \eqref{e.Laplacian_1_form_b1} also holds and there exists a subset of the basis, say $\{e_l\}_{l=1}^{l'}$, such that $e +e_l\nsubseteq \Lambda_j$. In this case, we note that there are only $2(n-1)-l'$ faces that contain $e$. This is exactly the number of neighbouring edges that $e$ has in $\G_n$. Thus, \eqref{e.Laplacian_1_form_b1} is equal to
	\begin{align*}
	 &-2(n-l') f(e)+\sum_{\sigma\in \{-1,1\}} \sum_{l=1}^{n}f(e+\sigma e_l)\1_{e+\sigma e_l \subseteq [-j,j]^n}\\
	 & \hspace{0.4\textwidth}=\sum_{\sigma\in \{-1,1\}} \sum_{l=1}^{n}(f(e+\sigma e_l)- f(e))\1_{e+\sigma e_l \in \G_n},
	\end{align*}
	where we again use the fact that $f(e+\sigma e_n)=0$ if $e+\sigma e_n$ is not an edge of $\Lambda_j$.
\end{enumerate}
\end{proof}

The above proposition allows us to obtain the following result regarding the Green's function on $1$-forms.
\begin{corollary}\label{c.Green_1}
	Let $\Lambda$ be a finite graph, as introduced in Section \ref{sss.graphs}. We have the following properties for any $e, e'$ two edges of $\Lambda$
	\begin{enumerate}[i)]
	\item If $e$ and $e'$ are both in the same direction as $\overrightarrow{e_i}$, i.e. $e,e'\in E_i$ we have that
	\begin{align*}
	&(-\Delta)^{-1}(e,e') = G_{\mathcal G_i(\Lambda)} (e,e')\\
	&(-\mathring \Delta)^{-1}(e,e') = G_{\mathring\G_i(\Lambda)} (e,e')
	\end{align*}

	\item Let $e,e'$ be two edges with different directions, i.e., $e \in E_i$ and $e' \in E_j$ with $i\neq j$, we have that
	\begin{align*}\label{}
	(-\mathring\Delta)^{-1}(e, e') = (-\Delta)^{-1}(e, e') = 0.
	\end{align*} 
\end{enumerate}
\end{corollary}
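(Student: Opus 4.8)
The plan is to deduce Corollary~\ref{c.Green_1} directly from the preceding proposition, which identifies the Laplacian $\Delta$ (resp.\ $\mathring\Delta$) acting on a $1$-form $f$ supported on edges of $E_i$ with the graph Laplacian $\Delta_{\mathcal G_i(\Lambda)}$ (resp.\ $\Delta_{\mathring\G_i(\Lambda)}$) acting on the corresponding function $f_i$ on the vertices of $\mathcal G_i(\Lambda)$. The key structural fact to extract from Proposition~\ref{p.basic_calculus}'s companion (the Laplacian-on-$1$-forms proposition) is that $\Delta$ does not mix directions: if $f$ is supported on $E_i$, then $\Delta f$ is again supported on $E_i$. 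This is visible from formula~\eqref{e.Laplacian_1_form} and its boundary variants, where every edge $e+\sigma e_l$ appearing on the right-hand side lies in $E_i$ whenever $e$ does (translating an edge of direction $e_i$ by a basis vector keeps it of direction $e_i$). Consequently the space $\Omega^1(\Lambda)$ splits as an orthogonal direct sum $\bigoplus_{i=1}^n \Omega^1_i(\Lambda)$, where $\Omega^1_i$ consists of $1$-forms supported on $E_i$, and each summand is invariant under $\Delta$.

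For part~(ii), once this direction-preserving property is established, the argument is immediate: if $e\in E_i$ and $e'\in E_j$ with $i\neq j$, write $\delta_{e'}$ for the indicator $1$-form of $e'$ (suitably antisymmetrized). Then $(-\Delta)^{-1}\delta_{e'}$ lies in $\Omega^1_j(\Lambda)$ by invariance of that summand, so its evaluation at $e\in E_i$ vanishes because $E_i\cap E_j=\emptyset$. Hence $(-\Delta)^{-1}(e,e')=\langle \delta_e, (-\Delta)^{-1}\delta_{e'}\rangle = 0$, and identically for $\mathring\Delta$.

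For part~(i), I would restrict attention to the invariant summand $\Omega^1_i(\Lambda)$. On that subspace, the map $f\mapsto f_i$ (extending by $0$ off $\overrightarrow C^1(\Lambda)$, i.e.\ imposing the Dirichlet condition on $\partial\mathcal G_i(\Lambda)$) is a linear isometry onto the space of functions on $\mathcal G_i(\Lambda)$ vanishing on $\partial\mathcal G_i(\Lambda)$ — here one uses that the inner product~\eqref{e.intern_product} on $1$-forms matches the natural $\ell^2$ inner product on graph functions up to the identification of $E_i$ with the positively-oriented cells. The preceding proposition says precisely that this isometry conjugates $-\Delta|_{\Omega^1_i}$ into $-\Delta_{\mathcal G_i(\Lambda)}$ on the interior vertices, while the Dirichlet condition handles $\partial\mathcal G_i(\Lambda)$. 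Since both operators are invertible (the Laplacian on $1$-forms by the proposition in Section~\ref{ss.reminder}, the graph Laplacian with Dirichlet boundary by standard theory), the inverses are conjugate as well, which gives $(-\Delta)^{-1}(e,e')=G_{\mathcal G_i(\Lambda)}(e,e')$ for $e,e'\in E_i$; the zero-boundary case is identical with $\mathring\G_i(\Lambda)$ in place of $\mathcal G_i(\Lambda)$.

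The only delicate point — and the one I would write out with some care — is bookkeeping around the boundary: one must check that the ``extend by zero'' identification really does land in the Dirichlet-class of functions on $\mathcal G_i(\Lambda)$ and that the three boundary cases analyzed in the preceding proposition's proof (interior edge, edge meeting $\partial[-j,j]^n$ but not contained in it, edge contained in $\partial[-j,j]^n$) are exactly matched by the three regimes in the definitions~\eqref{e.Laplacian_Gi}, \eqref{e.Laplacian_int} of $\Delta_{\mathcal G_i(\Lambda)}$. This is essentially already done in that proof, so here it amounts to invoking it. Everything else is a formal consequence of ``an invertible operator preserving an orthogonal decomposition inverts block-diagonally.'' I do not anticipate a genuine obstacle; the corollary is a clean repackaging of the proposition.
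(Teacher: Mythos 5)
Your proposal is correct and is precisely the argument the paper intends: the corollary is stated without proof as an immediate consequence of the preceding proposition, whose content is exactly that $\Delta f(e)=\Delta_{\G_i(\Lambda)}f_i(e)$ for $e\in E_i$ depends only on $f|_{E_i}$, i.e.\ the block-diagonality you extract. Spelling out that an invertible operator preserving the orthogonal splitting $\Omega^1=\bigoplus_i\Omega^1_i$ inverts block-by-block, with each block conjugate to the Dirichlet Laplacian on $\mathcal G_i(\Lambda)$, is the same (and the only reasonable) route.
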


By taking the limit as $\Lambda_n \nearrow \Z^n$, this allows us to define the inverse of the Laplacian in the edges of $\Z^n$ as follows.

\begin{definition}\label{d.Green_1}
	Let $\Lambda= \Z^n$ and note that $\mathcal G_i$ is isomorphic to $\Z^n$. We define the Green's function on the $1$-forms as follows, take $e,e' \in \overrightarrow{C}^1$
	\begin{align*}
	(-\Delta)^{-1}(e,e') = \begin{cases}
	G_{\mathcal G_i } (e,e') & \text{ if } e,e'\in \mathcal G_i\\
	0 &\text{ if } e\in \mathcal G_i, e' \in \mathcal G_j, i\neq j.
	\end{cases} 
	\end{align*}
	Here $G_{\mathcal G_i }$ is the Green's function of the Laplacian in the graph $\mathcal G_i\approx \Z^n$.  
\end{definition}

We finish this section with a discussion on the convergence of the Green's function of $\G_i(\Lambda_j)$ to that of $\G_i$.
\begin{lemma}\label{l.approximation2}
There exists a constant $K$ such that for any $j\geq 1$ and any edges $e,e' \in \G_i(\Lambda_j)$ whose distance to $\partial [j,j]^n$ is greater than or equal to $M$, we have 
	\begin{align}
	&\label{e.app_Green_free}|G_{\G_i(\Lambda_j)}(e,e')-G_{\G_i}(e,e')| \leq \frac{K}{M^{n-2}}\\
	&\label{e.app_Green_0}|G_{\mathring \G_i(\Lambda_j)}(e,e')-G_{\G_i}(e,e')| \leq \frac{K}{M^{n-2}}.
	\end{align}
\end{lemma}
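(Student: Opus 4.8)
The plan is to reduce everything to the already-established comparison estimate for the Green's function on $0$-forms, namely Lemma \ref{l.approximation} (and Proposition \ref{p.Green_vertices}), via the identifications of Corollary \ref{c.Green_1}. The point is that $G_{\G_i(\Lambda_j)}$ and $G_{\mathring\G_i(\Lambda_j)}$ are, by construction, Green's functions of a \emph{nearest-neighbour-type} Laplacian on subgraphs of $\G_i$, and $\G_i$ itself is isomorphic to $\Z^n$ as a graph. So the left-hand sides of \eqref{e.app_Green_free}--\eqref{e.app_Green_0} are literally instances of the quantity controlled by Lemma \ref{l.approximation}, provided we check that the graph $\G_i(\Lambda_j)$ (resp. $\mathring\G_i(\Lambda_j)$) sits inside $\G_i\cong\Z^n$ in such a way that ``distance $\geq M$ from $\partial[-j,j]^n$ in $\Lambda_j$'' translates into ``distance $\gtrsim M$ from the boundary of the subgraph, measured inside $\Z^n$''.

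Concretely, here is the order I would carry this out. First, fix the direction $i$ and recall the graph isomorphism $\G_i\cong\Z^n$ (its vertices are the edges $e\in E_i$, and two such are adjacent iff they differ by $\pm e_l$ for some $l$, which is exactly the nearest-neighbour structure of $\Z^n$ after the affine identification $k+\overrightarrow{e_i}\mapsto k$). Second, observe that the Laplacians $\Delta_{\G_i(\Lambda_j)}$ and $\Delta_{\mathring\G_i(\Lambda_j)}$ defined in \eqref{e.Laplacian_Gi}--\eqref{e.Laplacian_Gi0} are precisely the Dirichlet Laplacians on the finite subgraphs $\G_i(\Lambda_j)\subseteq\G_i$ and $\mathring\G_i(\Lambda_j)\subseteq\G_i$ respectively, with zero boundary condition on $\partial\G_i(\Lambda_j)$ (resp. $\partial\mathring\G_i(\Lambda_j)$) --- this is the content of the ``three cases'' discussion and the preceding Proposition, so no new work is needed. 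Third, apply Lemma \ref{l.approximation} to the finite graph $\G_i(\Lambda_j)$ (resp. $\mathring\G_i(\Lambda_j)$) inside $\Z^n\cong\G_i$: for $e,e'$ at $\ell^2$-distance $\geq M'$ from the boundary of that subgraph one gets
\[
\bigl|G_{\G_i(\Lambda_j)}(e,e')-G_{\G_i}(e,e')\bigr|\leq O\!\left(\frac{1}{(M')^{n-2}}\right).
\]
Fourth, and this is the only genuinely geometric point, compare the two notions of ``distance to the boundary'': if $e\in E_i$ is an edge of $\Lambda_j$ whose $\ell^2$-distance to $\partial[-j,j]^n$ is $\geq M$, then the distance (in $\Z^n\cong\G_i$) from $e$ to the complement $\G_i\setminus\G_i(\Lambda_j)$, resp. to $\partial\mathring\G_i(\Lambda_j)$, is $\geq cM$ for a dimensional constant $c>0$ (one just has to account for the fact that a vertex of $\G_i$ corresponds to an edge of $\Z^n$, shifting distances by at most $1$, and that for the free-boundary graph $\G_i(\Lambda_j)$ the ``extra layer'' of boundary vertices is within $O(1)$ of $\partial[-j,j]^n$). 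Feeding $M'=cM$ into the previous display, and absorbing the $O(1)$ shifts into the constant $K$, yields both \eqref{e.app_Green_free} and \eqref{e.app_Green_0}.

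The only mildly delicate step is the fourth one, the reconciliation of the three distance notions (distance to $\partial[-j,j]^n$ for the edge $e$ viewed in $\Z^n$; distance to $\partial\G_i(\Lambda_j)$; distance to $\partial\mathring\G_i(\Lambda_j)$), together with keeping track of whether the relevant boundary vertices of $\G_i(\Lambda_j)$ lie just outside or just inside $[-j,j]^n$. All of these discrepancies are bounded by an absolute constant independent of $j$ and $M$, so they only affect the constant $K$; there is no real analytic obstacle, since the hard estimate --- the comparison of a Dirichlet Green's function on a large finite subset of $\Z^n$ with the full-lattice Green's function --- is exactly Lemma \ref{l.approximation}, already granted. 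One should also note in passing that the case $e,e'$ in different directions ($e\in E_i$, $e'\in E_j$, $i\neq j$) is trivial for this lemma as both Green's functions vanish there by Corollary \ref{c.Green_1}(ii), so the statement only needs to be proved for $e,e'\in E_i$.
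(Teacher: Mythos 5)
There is a genuine gap at your second step, and it is precisely the point that makes this lemma non-trivial. The graphs $\G_i(\Lambda_j)$ and $\mathring\G_i(\Lambda_j)$ are \emph{not} Dirichlet restrictions of $\G_i\cong\Z^n$: the zero (killing) condition is imposed only on $\partial\G_i(\Lambda_j)$, resp. $\partial\mathring\G_i(\Lambda_j)$, which is only \emph{part} of the geometric boundary of the vertex set inside $\Z^n$. In the free case, $\partial\G_i(\Lambda_j)$ consists only of the $i$-direction edges sticking out of the two faces $\{x_i=\pm j\}$; on the remaining $2(n-1)$ faces the definition \eqref{e.Laplacian_Gi} sums only over the neighbours actually present in the subgraph, so the associated random walk is \emph{reflected} there rather than killed (the situation for $\mathring\G_i(\Lambda_j)$ is analogous with the roles of the faces interchanged). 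For a walk with reflecting boundary components the upper bound $G_{\G_i(\Lambda_j)}(e,e')\leq G_{\G_i}(e,e')$ does not follow from the monotonicity of killed-walk Green's functions: reflection confines the walk and can only increase visit counts, and in unfavourable geometries the reflected Green's function genuinely exceeds the full-lattice one. So Lemma \ref{l.approximation} cannot be applied ``literally''; your step 4 (the distance bookkeeping) is correct but is not where the difficulty lies.

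What the paper actually does is split the two bounds asymmetrically. The lower bound is obtained by comparing $G_{\G_i(\Lambda_j)}$ with the Green's function of the fully Dirichlet graph $\G_i^0(\Lambda_j)$ (killing on the whole boundary), for which Lemma \ref{l.approximation} does apply. For the upper bound one unfolds the reflected walk: reflecting the box across its Neumann faces produces an infinite graph $\widetilde\G_n(\Lambda_j)$ on which the walk is free, and $G_{\G_i(\Lambda_j)}(e,e')$ becomes a sum of Green's functions between $e$ and the reflected images $\tilde e'_\ell$ of $e'$. One then shows that each image contributes at most $O(M^{-(n-2)})$ times a factor decaying geometrically in the number of copies crossed (a walk must first travel distance $\geq M$ to reach the boundary, and pays a uniformly positive killing probability per copy traversed), which yields \eqref{e.app_Green_free}. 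If you want to keep your reduction-to-$0$-forms strategy, you must either carry out this unfolding argument or otherwise quantify the effect of the reflecting faces; as written, the claim that the hard estimate ``is exactly Lemma \ref{l.approximation}, already granted'' is not correct.
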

\begin{proof}
	We will prove \eqref{e.app_Green_free}, the other equation \eqref{e.app_Green_0} is similar. We start by recalling that the Green's function can be obtained as $(2n)^{-1}$ times the mean number of visits to $u_2$ by a random walk started from $u_1$ that is killed in $\partial \G_n (\Lambda_j)$ and is reflected in all other points of the boundary \footnote{For this identification, it is easier to work in a graph where all vertices have same degree equal to $2d$. To do that, one can add self-edges to each vertex with degree strictly smaller than $2d$ so that its degree is equal to $2d$.}. 
	This discussion, together with Lemma \ref{l.approximation}, implies that
	\begin{align*}
	G_{\mathcal G_i(\Lambda_j)}(e,e')\geq G_{\mathcal G_i ^0(\Lambda_j)}(e,e')\geq G_{\mathcal G_i}(e,e') \dgreen{-} O(M^{-(n-2)}),
	\end{align*}
	where $\mathcal G_i ^0(\Lambda_j)$ is the graph that puts $0$ boundary condition in all points where $\mathcal G_i (\Lambda_j)$ puts free boundary condition.
	
	For the upper bound, we use a reflection trick around all points where the random walk is reflected to obtain a new (infinite) graph $\widetilde{\G_n}(\Lambda_j)$. We denote $\tilde e$ and $\tilde e'$ the points of $\widetilde{\G_n}(\Lambda_j)$ that are identified to $e$ and $e'$ respectively. In this way
	\begin{align*}
	G_{\G_n(\Lambda_j)}(e,e')= \sum_{\tilde e, \tilde e'} 	G_{\widetilde \G_n(\Lambda_j)}(\tilde e, \tilde e').
	\end{align*}

	Note that for each (reflected) copy $\widetilde{\G}_n^{\ell}(\Lambda_j)\subseteq \widetilde{\G}_n(\Lambda_j)$ of $\G_n(\Lambda_j)$ there is one $\tilde e_\ell$ and $\tilde e_\ell'$ that is identified with $e$ and $e '$. To finish the proof we just need to show that there exists a constant $K$ such that
	\begin{align*}
G_{\widetilde \G_n(\Lambda_j)}(\tilde e_\ell, \tilde e'_\ell) \leq O(M^{-(n-2)}) e^{-K d_{Graph}(\widetilde{\G}_n^{\ell}(\Lambda_j), \G_n(\Lambda_j))} G_{\G_i}(e,e').
	\end{align*}
	Here, $d_{Graph}(\widetilde{\G}_n^{\ell}(\Lambda_j), \G_n(\Lambda_j))$ is the minimum amount of copies of $\G_n(\Lambda_j)$ that one has to cross in $\widetilde G_n(\Lambda_j)$ to go from $\G_n(\Lambda_j)$ to $\widetilde{\G}_n^{\ell}(\Lambda_j)$. This result follows from the following facts:
	\begin{enumerate}
		\item The probability that a random walk in $\Z^n$ started from $\partial [-j,j]^n$, the border of $\G_n(\Lambda_j)$, hits $e$ or $e'$ during its life time is less than or equal to $M^{-(n-2)}$.
		\item Take a random walk in $\Z^n$ that starts at the border of a $\G_n(\Lambda_j)$-like box and stops the first time it hits the border of another $\G_n(\Lambda_j)$-like box  that is at $d_{Graph}$ distance equal to $2$. One has that with uniformly positive probability (bigger than say $1/2n=1/8$) this random walk hits the 0-boundary of $\widetilde{\G}_n^{\ell}(\Lambda_j)$ during its life time. 
	\end{enumerate} 
\end{proof}

\section{Orthogonal decomposition of $k$-forms}\label{s.OD}
We will need the orthogonal decomposition of $k$-forms induced by the linear operators $\d$ and $\d^*$. We will only need to deal with finite cubes $\Lambda=\Lambda_j \subset \Z^n$. See Remark \ref{r.Odecomp} for a discussion in the infinite volume case. 

We start by defining the following vector subspaces.

%

\begin{definition}
	Let $\Omega^{k}$ be the space of real-valued $k$-forms in a finite cube $\Lambda$. We define
\begin{align}
	\label{e.k-1_k}&\Omega^{k-1\rightarrow k} = \d(\Omega^{k-1})  \\
	\label{e.k+1_k}&\Omega^{k+1 \rightarrow k} = \d^*(\Omega^{k+1}) .
	\end{align}
In the case of zero-boundary conditions, we also define 	
	\begin{align}
	\label{e.k-1_k_c}&\mathring \Omega^{k-1\rightarrow k} = \d(\mathring \Omega^{k-1}) \text{ and }\mathring \Omega^{k+1 \rightarrow k} = \mathring \d^*(\mathring \Omega^{k+1}).
	\end{align}
	
	Using this, we define $\proj{k-1}{k}, \proj{k+1}{k}: \Omega^k \mapsto \Omega ^k$ as the orthogonal projection of $\Omega^k$ into $\Omega^{k-1\rightarrow k}$ and $\Omega^{k+1\rightarrow k}$ respectively. Furthermore, we also  define the orthogonal projections $ \projc{k-1}{k},  \projc{k+1}{k}: \mathring\Omega^k \mapsto \mathring \Omega ^k$ from $\mathring\Omega^k$ into $\mathring \Omega^{k-1\rightarrow k}$ and $\mathring \Omega^{k+1\rightarrow k}$ respectively. Here, by the orthogonal projection, we mean with respect to the $\langle \cdot, \cdot \rangle$ product of $\Omega^k$ and $\mathring \Omega^k$ defined in \eqref{e.intern_product}. 
\end{definition}

\begin{remark}\label{r.counting_dimensions}
Thanks to Proposition \ref{p.basic_calculus}, we can compute the dimensions of $\Omega^{k-1\rightarrow k}$ and $\Omega^{k+1 \rightarrow k}$. To be more precise, let $\Lambda_j:=[-j,j]^4\cap \Z^4$ be our four dimensional graph. The dimensions of the projections of $\Omega^k$ are the following
\begin{table}[h!]
	\begin{tabular}{|l|l|l|l|l|}
		\hline
		Type & $\Omega^{0\rightarrow 1}$ & $\Omega^{2\rightarrow1}=\Omega^{1\rightarrow 2}$ & $\Omega^{3\rightarrow 2}= \Omega^{2\rightarrow 3}$ & $\Omega^{4\rightarrow 3}= \Omega^{3\rightarrow 4}$  \\ \hline
		Free& $(2j+1)^4-1$              & $(2j+1)^3(6j-1)+1$                       & $(2j)^3(6j+4)$                & $(2j)^4$                                              \\ \hline
		$0$  & $(2j-1)^4$                & $(2j-1)^3(6j+1)$                         & $(2j)^3(6j-4)+1$                   & $(2j)^4-1$                                           \\ \hline
	\end{tabular}
\end{table}

This table will help us compute the free-energy of the Gaussian-spin wave. The most important part of the table is to note that the dimension of $\Omega^{2\rightarrow 1}$ is of order $3 (2j)^4$, however the dimension of $\Omega^1$ is of order $4(2j)^4$. This explains the $3/4$ term in \eqref{e.d_free_energy}. 
\end{remark}

The following 
lemma gives a description of $\proj{k-1}{k}$ and $\proj{k+1}{k}$.

\begin{lemma}\label{l.proj}
	Let $\Lambda$ be a finite graph. Then, for all $1\leq k \leq n-1$
	\begin{align*}
	&\proj{k-1}{k} = \d \d^* \Delta^{-1},\ \ \  \projc{k-1}{k} = \d \d^* \mathring \Delta^{-1}, \\
	&  \proj{k+1}{k} = \d^* \d \Delta^{-1}, \ \ \  \proj{k+1}{k} = \mathring \d^* \d \mathring \Delta^{-1}.
	\end{align*}
\end{lemma}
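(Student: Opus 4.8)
The plan is to recognize all four formulas as instances of the standard Hodge-type characterization of an orthogonal projection: for a subspace $W\subseteq\Omega^k$, the orthogonal projection onto $W$ is the unique linear map $\pi$ with $\pi f\in W$ and $f-\pi f\perp W$ for every $f$. So I set $P:=\d\d^*\Delta^{-1}$, $Q:=\d^*\d\Delta^{-1}$ (and the hatted analogues $\mathring P:=\d\mathring\d^*\mathring\Delta^{-1}$, $\mathring Q:=\mathring\d^*\d\mathring\Delta^{-1}$), and for the first identity it suffices to check that (i) $Pf\in\d(\Omega^{k-1})=\Omega^{k-1\to k}$ for all $f\in\Omega^k$, and (ii) $\langle f-Pf,\d h\rangle=0$ for all $f\in\Omega^k$, $h\in\Omega^{k-1}$; and symmetrically for $Q$ with $\Omega^{k+1\to k}=\d^*(\Omega^{k+1})$.

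The verification rests on three elementary facts from the preliminaries. First, $\d\d=0$ (Proposition \ref{p.basic_calculus}(1)), hence, transposing, $\d^*\d^*=0$. Second, $\Delta=\d\d^*+\d^*\d$ commutes with $\d$ and with $\d^*$ (as noted right after the definition of the Laplacian), and it is invertible on $\Omega^k$ for $0<k<n$, so $\Delta^{-1}$ commutes with $\d$ and $\d^*$ there as well. Third, combining these, $\d^*\d\d^*=(\d\d^*+\d^*\d)\d^*-\d\d^*\d^*=\Delta\,\d^*$ and, likewise, $\d\d^*\d=\Delta\,\d$. Now (i) is immediate, since $Pf=\d(\d^*\Delta^{-1}f)$ with $\d^*\Delta^{-1}f\in\Omega^{k-1}$. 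For (ii), using that $\d^*$ is $-\d^t$, so that $\langle g,\d h\rangle=-\langle\d^* g,h\rangle$,
\begin{align*}
\langle f-Pf,\d h\rangle &= -\langle \d^*(f-Pf),\,h\rangle = -\langle \d^* f-\d^*\d\d^*\Delta^{-1}f,\,h\rangle \\
&= -\langle \d^* f-\Delta\,\d^*\Delta^{-1}f,\,h\rangle = -\langle \d^* f-\d^*\Delta\,\Delta^{-1}f,\,h\rangle = 0 .
\end{align*}
The identity $\proj{k+1}{k}=\d^*\d\Delta^{-1}$ follows by the mirror computation: $Qf=\d^*(\d\Delta^{-1}f)\in\d^*(\Omega^{k+1})$, and for $g\in\Omega^{k+1}$, using $\langle a,\d^* g\rangle=-\langle\d a,g\rangle$,
\begin{align*}
\langle f-Qf,\d^* g\rangle &= -\langle \d(f-Qf),\,g\rangle = -\langle \d f-\d\d^*\d\Delta^{-1}f,\,g\rangle \\
&= -\langle \d f-\Delta\,\d\Delta^{-1}f,\,g\rangle = -\langle \d f-\d\Delta\,\Delta^{-1}f,\,g\rangle = 0 .
\end{align*}

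The two zero-boundary identities are proved by repeating the argument verbatim with $\mathring\d^*$ in place of $\d^*$ and $\mathring\Delta$ in place of $\Delta$: Proposition \ref{p.basic_calculus} still gives $\d\d=0$, hence $\mathring\d^*\mathring\d^*=0$; $\mathring\Delta$ is invertible on $\mathring\Omega^k$ for $0<k<n$ and commutes with $\d$ and $\mathring\d^*$, so $\mathring\d^*\d\mathring\d^*=\mathring\Delta\,\mathring\d^*$ and $\d\mathring\d^*\d=\mathring\Delta\,\d$, and the same two-line checks go through for $\mathring P$ and $\mathring Q$. There is no genuine obstacle here; the only points to be careful about are the sign conventions stemming from $\d^*=-\d^t$ (which do not affect the vanishing of the relevant inner products) and arranging the algebra so that one only ever inverts $\Delta$ on $\Omega^k$ — not on $\Omega^{k-1}$ or $\Omega^{k+1}$, where invertibility may fail at the endpoints $k=1$ or $k=n-1$.
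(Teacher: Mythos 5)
Your argument is correct and is essentially the paper's own: both proofs rest on $\Delta=\d\d^*+\d^*\d$, the commutation of $\d,\d^*$ with $\Delta$ (hence with $\Delta^{-1}$ on $\Omega^k$), $\d\d=0$, and the characterization of the orthogonal projection onto $W$ by membership in $W$ plus orthogonality of the residual. The only (cosmetic) difference is that the paper writes $f=\d\d^*\Delta^{-1}f+\d^*\d\Delta^{-1}f$ and invokes the orthogonality of $\Omega^{k-1\rightarrow k}$ and $\Omega^{k+1\rightarrow k}$ from Proposition \ref{p.basic_calculus}, whereas you verify $\d^*(f-Pf)=0$ directly via $\d^*\d\d^*=\Delta\d^*$; your care to invert $\Delta$ only on $\Omega^k$ is a mild tidiness improvement but not a different idea.
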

\begin{proof}
	We prove the case for free-boundary condition, the zero-boundary condition is the same. 
	Recall that $\d$ and $\d^*$ commute with $\Delta^{-1}$. Thus, the image of the operators $\d \d^* \Delta^{-1} = \d  \Delta^{-1} \d^*$ and $\d^* \d \Delta^{-1} = \d^* \Delta^{-1} \d$ is $\Omega ^{k-1\rightarrow k}$ and $\Omega^{k+1 \rightarrow k}$ respectively.  Now, note that for any $f\in \Omega^k$ we have 
	\begin{align*}
	f = (\d \d^* + \d^* \d) \Delta^{-1} f = \d \d^* \Delta^{-1} f + \d^* \d \Delta^{-1}f.
	\end{align*}
	To conclude we just use that thanks to Proposition \ref{p.basic_calculus}, $\Omega^{k-1\rightarrow k}$ is perpendicular to $\Omega^{k+1 \rightarrow k}$.
\end{proof}

These orthogonal projections are useful to re-express the law of the gradient spin-wave from Definition \ref{d.SW} as follows.

\begin{proposition}\label{r.SW}
	The gradient spin-wave with free-boundary condition is the Gaussian process on $\Omega^{1\shortrightarrow 2}\subset \Omega^2$ which has density 
	\begin{align}\label{e.GSW_free_density}
	\frac
	{d \P^{\mathrm{GSW}}_{\beta,\Lambda}}
	{d \calL_{1\shortrightarrow 2}}
	(\varrho ) 
	&:= 
	\frac 1 {Z_{SpW}}   \exp\left( -\frac \beta 2   \sum_{f\in F(\Lambda)}  \varrho^2(f) \right)\propto \exp\left(-\frac{\beta}{2} \langle \varrho, \varrho \rangle \right),
	\end{align}
	where $\calL_{1\shortrightarrow 2}$ denotes the Lebesgue measure on $\Omega^{1\shortrightarrow 2}$. In other words, $(\langle \rho, f \rangle)_{f\in \Omega^2}$ is a centred Gaussian process with variance
	\begin{align*}
	\E^{GSW}\left[\langle \rho, f \rangle^2 \right] = \langle \proj{1}{2} f, \proj{1}{2} f\rangle. 
	\end{align*}
	
	For the case of the zero-boundary condition, the gradient spin wave is the Gaussian process on $\mathring \Omega^{1\shortrightarrow 2}$ with  density given by \eqref{e.GSW_free_density} where $\calL$ is replaced by $\mathring\calL$ the Lebesgue measure on $\mathring \Omega^{1\shortrightarrow 2}$.
	
	\begin{proof}
		This just follows from the fact that if $\varrho:= \d \phi$ is a GSW, where $\phi$ is a GFF, then $(\langle \rho, f \rangle)_{f\in \Omega^2}$ is the centred Gaussian process with variance given by
		\begin{align*}
		\E\left[ \langle \d \phi, f \rangle^2\right] &= \E\left[ \langle \phi, \d^*f \rangle^2\right]\\
		&= - \langle \d^* f, \d^* \Delta^{-1} f \rangle \\
		&= \langle f, \d \d^* \Delta^{-1} f\rangle\\
		&=\langle\proj{1}{2}f, \proj{1}{2} f\rangle.
		\end{align*}
The case of zero-boundary conditions follows using the same proof. 
	\end{proof}
\end{proposition}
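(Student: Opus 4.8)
The plan is to exploit that, by Definition \ref{d.SW}, $\varrho = \d\phi$ is the image of the centred Gaussian vector $\phi \sim \P^{\GFF}_{\beta,\Lambda}$ under the \emph{linear} map $\d\colon \Omega^1 \to \Omega^2$. A linear image of a centred Gaussian is again a centred Gaussian, supported on the range of the map, which here is $\d(\Omega^1) = \Omega^{1\shortrightarrow 2}$ by the very definition \eqref{e.k-1_k}. So everything reduces to (a) computing the covariance form of $\varrho$ and (b) checking it is non-degenerate on $\Omega^{1\shortrightarrow 2}$, so that a density with respect to $\calL_{1\shortrightarrow 2}$ is meaningful; the announced Gaussian density formula then follows from the standard finite-dimensional computation.

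For (a), fix $f \in \Omega^2$ and compute the variance of $\langle \varrho, f\rangle$. Since $\d^* = -\d^t$ with respect to the inner products of \eqref{e.intern_product}, one has $\langle \d\phi, f\rangle = -\langle \phi, \d^* f\rangle$, so squaring removes the sign and, using that $\phi$ has covariance $\tfrac1\beta(-\Delta)^{-1}$ on $\Omega^1$ (legitimate since $-\Delta$ is invertible on $\Omega^1$, as $1$ is strictly between $0$ and $n$),
\[
\E^{\GFF}\big[\langle \d\phi, f\rangle^2\big] = \E^{\GFF}\big[\langle \phi, \d^* f\rangle^2\big] = \tfrac1\beta\,\langle \d^* f,\,(-\Delta)^{-1}\d^* f\rangle .
\]
Next I would use that $\d^*$ commutes with $\Delta$, hence with $(-\Delta)^{-1}$, to move $(-\Delta)^{-1}$ past $\d^*$, then the adjunction $\langle \d^* b, a\rangle = -\langle b, \d a\rangle$ once more, together with $(-\Delta)^{-1} = -\Delta^{-1}$; this turns the right-hand side into $\tfrac1\beta\langle f, \d\d^*\Delta^{-1} f\rangle$. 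By Lemma \ref{l.proj}, $\d\d^*\Delta^{-1} = \proj12$, and since $\proj12$ is an orthogonal projection this equals $\tfrac1\beta\langle \proj12 f, \proj12 f\rangle$. Polarising yields $\E^{\GFF}[\langle\varrho,f\rangle\langle\varrho,g\rangle] = \tfrac1\beta\langle \proj12 f, \proj12 g\rangle$ for all $f,g\in\Omega^2$.

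For (b), restrict this last identity to $f,g \in \Omega^{1\shortrightarrow 2}$, where $\proj12$ acts as the identity: the covariance of $\varrho$, viewed as a Gaussian vector on the Euclidean space $(\Omega^{1\shortrightarrow 2},\langle\cdot,\cdot\rangle)$, is $\tfrac1\beta$ times the identity. In particular it is non-degenerate, so $\varrho$ admits a density with respect to $\calL_{1\shortrightarrow 2}$, and the density of a Gaussian vector whose covariance is $\tfrac1\beta$ times the identity is exactly $\propto \exp\!\big(-\tfrac\beta2\langle\varrho,\varrho\rangle\big)$, which is \eqref{e.GSW_free_density}. The zero-boundary case is identical after replacing $\d^*$ by $\mathring\d^*$ and $\Delta$ by $\mathring\Delta$, using that $\mathring\d^*$ commutes with $\mathring\Delta$ and the corresponding part of Lemma \ref{l.proj} identifying $\d\mathring\d^*\mathring\Delta^{-1} = \projc12$ on $\mathring\Omega^2$.

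I do not expect a genuine obstacle: the statement is essentially bookkeeping on top of the orthogonal-decomposition machinery already set up. The only points deserving care are keeping the signs straight in $\d^* = -\d^t$ and in $(-\Delta)^{-1} = -\Delta^{-1}$ (so that the computed variance comes out positive and equal to a norm squared), and making explicit that the pushed-forward Gaussian lives on, and is non-degenerate within, $\Omega^{1\shortrightarrow 2}$ — this is what legitimises speaking of a Lebesgue density at all. An alternative route for this last point: since $-\Delta$ is invertible on $\Omega^1$ there are no harmonic $1$-forms, so $\Omega^1 = \Omega^{0\shortrightarrow 1}\oplus\Omega^{2\shortrightarrow 1}$ with the two (orthogonal, $\Delta$-invariant) summands independent under $\P^{\GFF}$; $\d$ annihilates $\Omega^{0\shortrightarrow 1}$ because $\d\d=0$, and $\d$ restricts to a linear isomorphism $\Omega^{2\shortrightarrow 1}\to\Omega^{1\shortrightarrow 2}$ (injectivity: $\d\d^*g=0 \Rightarrow \langle\d^*g,\d^*g\rangle=0 \Rightarrow \d^*g=0$); transporting the Gaussian law of the $\Omega^{2\shortrightarrow 1}$-component of $\phi$ through this isomorphism produces the claimed density directly.
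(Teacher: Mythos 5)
Your proof is correct and follows essentially the same route as the paper's: the identity $\E\bigl[\langle \d \phi, f\rangle^2\bigr]=\E\bigl[\langle \phi, \d^* f\rangle^2\bigr]$, commutation of $(-\Delta)^{-1}$ with $\d^*$, the adjunction $\d^*=-\d^t$, and Lemma \ref{l.proj} identifying $\d\d^*\Delta^{-1}$ with $\proj{1}{2}$. Your added care about non-degeneracy of the covariance on $\Omega^{1\shortrightarrow 2}$ (which legitimises the Lebesgue density) and your retention of the factor $\tfrac1\beta$ (which the paper's displayed variance omits, apparently a typo) are welcome refinements but do not change the substance of the argument.
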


\begin{remark}\label{r.Odecomp}
Such orthogonal decompositions for $k$-forms also exist on the infinite lattice $\Z^n$ but this requires some further care. First, one needs to add some integrability conditions in the definitions of $\Omega^{k-1\rightarrow k}$ and $\Omega^{k+1 \rightarrow k}$ as follows 
\begin{align}
	\label{e.k-1_k}&\Omega^{k-1\rightarrow k} = \d(\Omega^{k-1}) \cap \{f \text{a $k$-form}: \langle f, f\rangle <\infty \},\\
	\label{e.k+1_k}&\Omega^{k+1 \rightarrow k} = \d^*(\Omega^{k+1}) \cap \{f \text{a $k$-form}: \langle f, f\rangle <\infty \}.
	\end{align}
It is then a non-trivial exercise to check that the identities from Lemma \ref{l.proj} still hold at least when $n\geq 3$ and when applied to local functions for example. 
\end{remark}

\section{Decoupling between spin-wave and Coulomb gas}\label{s.decoupling}
Our goal in this section is twofold: 
\bnum
\item To provide a decoupling statement for Villain-$U(1)$ lattice gauge model. This will produce two independent random variables: a Gaussian spin-wave  (Definition \ref{d.SW})  and a Coulomb gas (see Definition \ref{d.Coulomb} below) which will live on the $3$-cells of $\Lambda\subset \Z^4$. 
\item Along the way, and in the same fashion as in \cite{GS2}, we will obtain a useful algorithm to sample the Coulomb gas out of the ``angular spins'' $\{\theta_e\}_{e\in \overrightarrow C^1(\Lambda)} \sim \P_\beta^{Vil}$. 
\enum 

We start with a definition of the Coulomb gas which arises in our present context. 
We also refer the reader to \cite{GS2} where Coulomb gases in dimension 2 are discussed in detail (including their behavior with respect to the rooting vertex). 

\begin{definition}\label{d.Coulomb}
(Coulomb gas) Let $\Lambda \subset \Z^4$ be a finite graph.
The Coulomb-gas associated to Villain-$U(1)$ lattice gauge theory on $\Lambda$ is the following probability measure on integer valued 3-forms: 
\begin{itemize}
	\item \textbf{Free boundary condition:} 
	\begin{align*}\label{}
	\FK{\beta,\Lambda, free}{\mathrm{Coulomb}}{\{q\}} \propto \exp\left (-\frac {\beta(2\pi)^2} 2 \<{q, (-\Delta)^{-1} q}\right ) 1_{\d q=0} \, \ \ \text{for any $q\in \Omega^3_\Z(\Lambda)$.}
	\end{align*}
	\item \textbf{Zero-boundary condition:}
	\begin{align*}\label{}
	\FK{\beta,\Lambda, 0}{\mathrm{Coulomb}}{\{\mathring q\}} \propto \exp\left (-\frac {\beta(2\pi)^2} 2 \<{\mathring q, (-\mathring \Delta)^{-1} \mathring q}\right )  1_{\d \mathring q=0}\,  \ \ \text{for any $\mathring q\in \mathring \Omega^3_\Z(\Lambda)$}\,.
	\end{align*}
\end{itemize}
\end{definition}

We state now the main proposition of this section.
\begin{proposition}\label{p.decouplingF}
	Let $(\theta, m)$ be a Villain $U(1)$-Lattice gauge theory with any boundary condition and define
	\begin{align*}
	&q:= \d m\\
	&\varrho:= \begin{cases}
	\d\theta + (2\pi)\proj{1}{2}m &  \text{ if $(\theta,m)$ has free-boundary condition}\\
		\d\theta + (2\pi) \projc{1}{2}m &  \text{ if $(\theta,m)$ has zero-boundary condition}
	\end{cases}
	\end{align*}
	Then
	\begin{enumerate}
		\item $q$ is independent of $\varrho$.
		\item $q$ has the law of a Coulomb gas on the $3$-forms with the same boundary condition as $(\theta,m)$.
		\item $\varrho$ is a gradient spin-wave on the $2$-forms with the same boundary condition as $(\theta,m)$.
	\end{enumerate}
\end{proposition}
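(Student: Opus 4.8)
The plan is to exploit the structure of the joint law \eqref{e.uvil}, namely that conditionally on $\theta$ the $2$-form $m$ is an inhomogeneous integer-valued Gaussian white noise on the $2$-cells (Proposition \ref{pr.Villain}), together with the orthogonal decomposition of $2$-forms from Section \ref{s.OD}. I will carry out the argument in the free-boundary case; the zero-boundary case is identical with $\proj{1}{2}$, $\Delta$ replaced by $\projc{1}{2}$, $\mathring\Delta$ everywhere, using the circle-decorated operators from Proposition \ref{p.basic_calculus} and Lemma \ref{l.proj}. The key observation is that $\d\theta + (2\pi)m$ decomposes orthogonally as
\begin{align*}
\d\theta + (2\pi)m = \bigl(\d\theta + (2\pi)\proj{1}{2}m\bigr) + (2\pi)\proj{3}{2}m = \varrho + (2\pi)\proj{3}{2}m,
\end{align*}
since $\d\theta \in \Omega^{1\shortrightarrow 2}$ is already in the image of $\d$, and $\Omega^2 = \Omega^{1\shortrightarrow 2}\oplus\Omega^{3\shortrightarrow 2}$ is an orthogonal direct sum by Proposition \ref{p.basic_calculus} (there is no harmonic part since $0<2<4$). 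Thus the energy in \eqref{e.uvil} splits as $\langle\varrho,\varrho\rangle + (2\pi)^2\langle\proj{3}{2}m,\proj{3}{2}m\rangle$, and moreover $q = \d m = \d\proj{3}{2}m$ is a function of $\proj{3}{2}m$ alone (because $\d\d^*\d\Delta^{-1} = \d$ on $\Omega^1$-images, equivalently $\d\proj{1}{2} = \d$ kills nothing and $\d(\d\d^*\Delta^{-1}m)=0$ — I should phrase this as: $\d m = \d(\proj{1}{2}m + \proj{3}{2}m)$ and $\d\proj{1}{2}m = \d\d\d^*\Delta^{-1}m = 0$ by $\d\d=0$, so $q=\d\proj{3}{2}m$).

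The concrete steps are as follows. First, I change variables in the joint density from $(\theta,m)$ to $(\varrho, \proj{3}{2}m)$ — more precisely I will use the bijection of Proposition \ref{p.bijection} to write $m = \d\psi + n_q$ with $q=\d m$ and $[\psi]$ a class of $1$-forms, but the cleaner route is: condition on $\theta$, sum the conditional law of $m$ (a product of IV-Gaussians by Proposition \ref{pr.Villain}) over all $m$ with a prescribed value of $q=\d m$, and identify the marginal. Second, I compute $\langle\proj{3}{2}m,\proj{3}{2}m\rangle$ in terms of $q$: since $\proj{3}{2}m \in \Omega^{3\shortrightarrow 2} = \d^*\Omega^3$ and $\d\proj{3}{2}m = q$, one has $\proj{3}{2}m = \d^*\Delta^{-1}q$ up to the kernel, hence $\langle\proj{3}{2}m,\proj{3}{2}m\rangle = \langle\d^*\Delta^{-1}q, \d^*\Delta^{-1}q\rangle = \langle q, (-\Delta)^{-1}q\rangle$ using $\d q=0$; this is exactly the Coulomb-gas energy of Definition \ref{d.Coulomb}. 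Third, I argue that the "fiber" over each fixed $q$ — i.e. the conditional law of $(\theta, \proj{1}{2}m)$, equivalently of $\varrho$, given $q$ — does not depend on $q$: the residual integer ambiguity in $m$ at fixed $\d m = q$ is an element of $\Omega^2_\Z$ with $\d(\cdot)=0$, which by Proposition \ref{p.basic_calculus}(2) is $\d(\text{integer }1\text{-form})$, and such a shift can be absorbed into $\theta$ (using that $\theta$ ranges over $[-\pi,\pi)$ and the integrand depends on $\d\theta + 2\pi m$ only through its value mod the lattice $2\pi\Omega^2_\Z$ in the relevant direction), leaving the law of $\varrho = \d\theta + (2\pi)\proj{1}{2}m$ unchanged. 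This simultaneously shows (1) independence of $q$ and $\varrho$ and gives (2): the marginal of $q$ is proportional to $\exp(-\frac{\beta(2\pi)^2}{2}\langle q,(-\Delta)^{-1}q\rangle)\mathbf 1_{\d q=0}$. Finally for (3), once $q$ is integrated out the remaining density in $\varrho$ is $\propto \exp(-\frac\beta2\langle\varrho,\varrho\rangle)$ on $\Omega^{1\shortrightarrow 2}$, which is precisely the gradient spin-wave by Proposition \ref{r.SW}.

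The main obstacle — and the step deserving real care — is the third one: making rigorous the claim that the conditional law of $\varrho$ given $q$ is $q$-independent. The subtlety is exactly the one flagged in Remark \ref{r.ImpSpW} and in the paper's discussion, namely that the decoupling holds for $\d\theta$ but \emph{not} for $\theta$ itself. Concretely, when I shift $m$ by $\d u$ for an integer $1$-form $u$ at fixed $q$, I want to compensate by shifting $\theta \mapsto \theta - 2\pi u \Mod{2\pi}$; but $\theta$ is valued in $[-\pi,\pi)^{\overrightarrow C^1}$, so this compensation is only well-defined modulo the periodization, and one must check that the relevant objects ($\d\theta + 2\pi m$, hence $\varrho$, and the Lebesgue measure $d\theta$) are genuinely invariant under this combined shift — this is where the fact that we look at $\d\theta$ rather than $\theta$ is essential, because $\d$ of the compensating shift is exactly $2\pi\d u$, matching the shift in $2\pi m$. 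I would organize this as a lemma: the map $(\theta, m)\mapsto(\theta - 2\pi u, m + \d u)$ (angles taken mod $2\pi$) preserves $\P_\beta^{Vil}$ and fixes both $q$ and $\varrho$, for every $u\in\Omega^1_\Z$; combined with the observation that $\{m'\in\Omega^2_\Z : \d m' = q\} = \{m + \d u : u \in \Omega^1_\Z\}/(\text{closed integer }1\text{-forms})$ via Proposition \ref{p.basic_calculus}(2), this yields the claimed $q$-independence and hence the full statement. The remaining computations (the energy split, the identity $\langle q,(-\Delta)^{-1}q\rangle$, matching normalizations) are routine given Lemma \ref{l.proj} and Proposition \ref{r.SW}.
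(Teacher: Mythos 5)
Your overall architecture coincides with the paper's: the orthogonal splitting $\d\theta+2\pi m=\varrho+2\pi\proj{3}{2}m$, the identity $\langle\proj{3}{2}m,\proj{3}{2}m\rangle=\langle q,(-\Delta)^{-1}q\rangle$, the bijection $m\leftrightarrow([\psi],q)$, and the reduction to showing that the ``fiber'' measure over a fixed $q$ is a $q$-independent multiple of Lebesgue measure on $\Omega^{1\shortrightarrow 2}$ are exactly the steps of the paper's proof. The gap is in the lemma you propose for that last, crucial step. For $u\in\Omega^1_\Z$ the ``compensating'' shift $\theta\mapsto\theta-2\pi u\Mod{2\pi}$ is the identity map (since $2\pi u\equiv 0\Mod{2\pi}$ and $\theta$ already lies in $[-\pi,\pi)^{\overrightarrow C^1}$), so your map degenerates to $(\theta,m)\mapsto(\theta,m+\d u)$, which preserves neither $\P_\beta^{Vil}$ nor $\varrho$ (the latter is shifted by $2\pi\proj{1}{2}\d u=2\pi\d u$). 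The non-degenerate version $(\theta,m)\mapsto(\theta-2\pi u,m+\d u)$ without reduction does fix $\d\theta+2\pi m$, but it leaves the state space $[-\pi,\pi)^{\overrightarrow C^1}\times\Omega^2_\Z$. Either way, integer shifts can at best give invariance of the fiber measure under the discrete lattice $2\pi\,\d\,\Omega^1_\Z\subset\Omega^{1\shortrightarrow2}$, and invariance under a lattice does not characterize Lebesgue measure (a locally finite atomic measure on the lattice is also invariant), so you cannot conclude that $\varrho$ is Gaussian. Moreover, shifts that preserve $q$ say nothing about how the total mass of the fiber depends on $q$, which is what points (1) and (2) require.

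What is actually needed --- and what the paper's Claim \ref{c.Lebesgue_1_2} supplies --- is invariance of $\mu(A):=\sum_{[\psi]}\int_{[-\pi,\pi)^{C^1}}\1_{A}(\d \theta + 2\pi\d \psi + 2\pi\d \d^* \Delta^{-1} n_\qq)\, d\theta$ under translation by $\d h'$ for an \emph{arbitrary real-valued} $1$-form $h'$. This is proved by setting $\theta'=\theta+h'\Mod{2\pi}$ and $\psi'=\tfrac1{2\pi}(\theta+h'-\theta')\in\Omega^1_\Z$, i.e.\ by splitting the real shift into a part absorbed by the Lebesgue integral over $\theta$ and an integer remainder absorbed by the sum over the equivalence classes $[\psi]$. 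Full translation invariance, together with local finiteness and the fact that the support is $\Omega^{1\shortrightarrow2}$, identifies $\mu$ as a constant multiple of $\calL_{1\shortrightarrow2}$ by uniqueness of Haar measure, which yields (3); and the particular real (non-integer) choice $h'=2\pi\d^*\Delta^{-1}n_\qq$ shows the constant is independent of $q$, which yields (1) and (2). Your write-up correctly isolates where the difficulty sits, but the integer-shift lemma as stated does not close it.
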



\begin{proof} We focus on the free-boundary case. The proof for $0$-boundary condition follows the exact same lines.
	
Notice that using the notation $n_q$ from Definition \ref{d.n_q}, we have 
	\begin{align*}
	\varrho&:=\d \theta + (2\pi) \proj{1}{2} \mm\\
	&= \d \theta + (2\pi) \mm - (2\pi) \d^* \d \Delta^{-1} \mm\\
	&= \d \theta +(2\pi) \mm - (2\pi) \d^* \d \Delta^{-1} n_q\\
	&=\d \theta + (2\pi) \mm -(2\pi) n_\qq + (2\pi) \d \d^* \Delta^{-1} n_\qq\,.
	\end{align*}

	Let us take $W_1, W_2$ two continuous function from the $2$-forms to $\R$. 
	\begin{align}\label{e.obj_corr}
	\E_\beta^{Vil}\left[ W_1(\varrho)W_2(q)\right]&=\frac{1}{Z_\beta^{Vil}}\sum_{\substack{\mm }} \int_{[-\pi,\pi)^{C^1(\Lambda)}} W_1(\varrho) W_2(\qq) e^{-\frac{\beta}{2}\langle \d \theta +2\pi\mm, \d \theta + 2\pi\mm \rangle }d\theta.
	\end{align}
	Note that $\langle \d \theta +2\pi\mm, \d \theta + 2\pi\mm \rangle$ 	 is equal to
	\begin{align*}
	&\langle \d \theta +(2\pi)\proj{1}{2}\mm+(2\pi)\proj{3}{2}\mm, \d \theta + (2\pi)\proj{1}{2}\mm+(2\pi)\proj{3}{2}\mm \rangle\\
 &=\langle \varrho, \varrho \rangle + (2\pi)^2\langle\d^*\d\Delta^{-1}\mm ,\d^*\d\Delta^{-1}\mm \rangle \\
&= \langle \varrho, \varrho \rangle + (2\pi)^2\langle \qq, (-\Delta)^{-1} \qq \rangle.
	\end{align*}

	Recall now the bijection of Proposition \ref{p.bijection} and apply it to $\mm$. Note that in this bijection, we have  $\mm-n_q= \d \psi$. This implies that \eqref{e.obj_corr} is equal to
	\begin{align}
	\nonumber&\frac{1}{Z_{\beta}^{Vil}}\sum_{[\psi]}\sum_{\substack{ \d\qq=0} } \int_{[-\pi,\pi)^{C^1}}W_1(\varrho)W_2(\qq) e^{-\frac{\beta}{2}(\langle \varrho, \varrho \rangle + (2\pi)^2 \langle \qq, (-\Delta)^{-1}\qq\rangle)} d\theta\\
	&=\frac{1}{Z_{\beta}^{Vil}}\sum_{ \d\qq=0} \left (W_2(\qq)e^{-\frac{\beta(2\pi)^2}{2}\langle \qq, (-\Delta)^{-1}\qq\rangle }  \sum_{[\psi]} \int_{[-\pi,\pi)^{C^1}}W_1(\varrho) e^{-\frac{\beta}{2}\langle \varrho, \varrho \rangle } d\theta\right).\label{e.end_decoupling}
	\end{align}
	In the $2d$ Villain model, one may readily conclude as the sum $\sum_{\substack{[\psi]}}$ is nothing but a $\sum_{\substack{\psi}}$ where $\psi$ are rooted at some prescribed vertex. This is no longer the case as many integer-valued $1$-forms belong to the same $[\psi]$. The claim below allows us to overcome this degeneracy difficulty and thus concludes the proof of Proposition \ref{p.decouplingF}.
\end{proof}
	\begin{claim} \label{c.Lebesgue_1_2}
		Let us fix a $3$-form $\qq$ with $\d \qq=0$. We define the following measure on the $2$-forms
		\begin{align*}
		\mu(A):= \sum_{[\psi]}\int_{[-\pi,\pi)^{C^1}} \1_{A}(\d \theta + 2\pi\d \psi + 2\pi\d \d^* \Delta^{-1} n_\qq ) d\theta.
		\end{align*}
		Then, there exists a deterministic constant $\cons>0$ which depends neither on $\qq$ nor on $\beta$ (but depends on\footnote{Of course, it also changes when one changes the boundary condition.} $\Lambda$ ) such that
		\begin{align*}
		\mu= \cons \mathcal L _{1\shortrightarrow 2}.
		\end{align*}
		In other words, $\mu$ is a constant times the Lebesgue measure on $\Omega^{1\shortrightarrow 2}$.  
	\end{claim}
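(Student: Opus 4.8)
\emph{Proof plan.} The plan is to show that $\mu$ is a translation-invariant Radon measure on the finite-dimensional subspace $\Omega^{1\shortrightarrow 2}\subset\Omega^2$; such a measure is automatically a constant multiple of $\mathcal{L}_{1\shortrightarrow 2}$, and the constant is then pinned down by a single explicit computation (and is manifestly independent of $\qq$ and $\beta$).

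First I would turn the sum over equivalence classes into a sum over a lattice. Recall from Proposition~\ref{p.bijection} that $[\psi]$ ranges over $\Omega^1_\Z/\mathcal R$, where $\psi_1\,\mathcal R\,\psi_2$ iff $\d\psi_1=\d\psi_2$; hence summing over $[\psi]$ amounts to summing over the set $L:=\d(\Omega^1_\Z)$. Since $\d$ has integer (indeed $\pm1$) matrix entries, $L\subset\Omega^2_\Z$ is discrete, and since the indicator forms $\{\1_e\}_{e\in C^1}$ span $\Omega^1$ and $\d(\Omega^1)=\Omega^{1\shortrightarrow2}$, the lattice $L$ has full rank in $\Omega^{1\shortrightarrow 2}$. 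Writing $c_\qq:=2\pi\,\d\d^*\Delta^{-1}n_\qq=2\pi\proj{1}{2}n_\qq\in\Omega^{1\shortrightarrow2}$ (Lemma~\ref{l.proj}), we get
\[
\mu(A)=\sum_{v\in L}\int_{[-\pi,\pi)^{C^1}}\1_A\bigl(\d\theta+2\pi v+c_\qq\bigr)\,d\theta .
\]
As $\d\theta+2\pi v+c_\qq\in\Omega^{1\shortrightarrow2}$ for every $\theta$ and $v$, the measure $\mu$ is supported on $\Omega^{1\shortrightarrow2}$; and since $L$ is a lattice, only finitely many $v$ contribute to $\mu(A)$ when $A$ is bounded (each contributing at most $(2\pi)^{|C^1|}$), so $\mu$ is Radon.

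Next I would prove translation invariance of $\mu$ on $\Omega^{1\shortrightarrow 2}$. The key point is that, although each term $\1_A(\d\theta+2\pi v+c_\qq)$ is \emph{not} well defined on the torus $\mathbb T:=\Omega^1/2\pi\Omega^1_\Z$, the full sum $\theta\mapsto\sum_{v\in L}\1_A(\d\theta+2\pi v+c_\qq)$ \emph{is} $2\pi\Omega^1_\Z$-periodic: shifting $\theta$ by $2\pi w$ with $w\in\Omega^1_\Z$ changes $\d\theta$ by $2\pi\,\d w\in 2\pi L$, which merely reindexes the sum. Hence the integral over the fundamental domain $[-\pi,\pi)^{C^1}$ is unchanged if $\theta$ is translated by any fixed vector. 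Given $u\in\Omega^{1\shortrightarrow2}$, pick $\tilde u\in\Omega^1$ with $\d\tilde u=u$ (possible since $\d:\Omega^1\to\Omega^{1\shortrightarrow2}$ is onto); substituting $\theta\mapsto\theta-\tilde u$ and using $\d(\theta-\tilde u)+2\pi v+c_\qq=(\d\theta+2\pi v+c_\qq)-u$ gives $\mu(A)=\mu(A-u)$. Since $u$ was arbitrary, $\mu$ is translation invariant on $\Omega^{1\shortrightarrow2}$, hence equals $\cons\,\mathcal L_{1\shortrightarrow2}$ for some $\cons\ge0$; the constant depends only on $\Lambda$ because $c_\qq$ entered only as a translation (so no $\qq$-dependence) and $\beta$ does not appear at all. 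Finally, to see $\cons\in(0,\infty)$, evaluate $\mu$ on a fundamental parallelepiped $D$ of $2\pi L$: since $\sum_{v\in L}\1_D(x+2\pi v)=1$ for a.e.\ $x\in\Omega^{1\shortrightarrow2}$, Tonelli gives $\mu(D)=\int_{[-\pi,\pi)^{C^1}}1\,d\theta=(2\pi)^{|C^1|}$, so $\cons=(2\pi)^{|C^1|}/\mathrm{covol}(2\pi L)$. The $0$-boundary case is identical after replacing $\Omega^\bullet,\d^*,\Delta$ by $\mathring\Omega^\bullet,\mathring\d^*,\mathring\Delta$ and $[-\pi,\pi)^{C^1}$ by the cube over interior edges.

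The step I expect to require the most care is the passage to the torus in the translation-invariance argument: one must be careful that the individual summands are only defined on $[-\pi,\pi)^{C^1}$ and not on $\mathbb T$, and that it is precisely the summation over the \emph{whole} lattice $L$ which restores $2\pi\Omega^1_\Z$-periodicity and thereby licenses translating $\theta$ by the (generally non-integer) vector $\tilde u$. Everything else is bookkeeping with the orthogonal structure and with Tonelli's theorem.
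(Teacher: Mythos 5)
Your proof is correct and follows essentially the same route as the paper's: you establish that $\mu$ is supported on $\Omega^{1\shortrightarrow 2}$, locally finite, and translation invariant (via the same trick of reducing $\theta+h'$ mod $2\pi$ and absorbing the integer part into the sum over $[\psi]$, which you phrase as $2\pi\Omega^1_\Z$-periodicity of the summed integrand), and then invoke uniqueness of the translation-invariant measure; the independence of $\cons$ from $\qq$ is obtained exactly as in the paper, by noting that $2\pi\d\d^*\Delta^{-1}n_\qq$ enters only as a shift in $\Omega^{1\shortrightarrow 2}$. Your explicit identification of the sum over classes with the lattice $L=\d(\Omega^1_\Z)$ and the resulting formula $\cons=(2\pi)^{|C^1|}/\mathrm{covol}(2\pi L)$ are a correct (and slightly more explicit) bonus beyond what the paper records.
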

\begin{proof}
$ $ 
	\begin{enumerate}
		\item \textit{The measure is supported on a subset of $\Omega^{1\shortrightarrow 2}$.} This follows directly from the fact that $\d \theta +2\pi \d \psi + 2\pi\d \d^* \Delta^{-1} n_\qq $ always lives in $\Omega^{1\shortrightarrow 2}$.
		\item \textit{The measure of the unit ball is finite.} (Recall we work with a finite graph $\Lambda\subset \Z^4$ here). To do this, it is easier to work with the ball in the infinity norm. First, note that $\|\d \theta\|_\infty \leq 2\pi$. Now, we use the fact that for any $[\psi_1]\neq [\psi_2]$ the distance between $\d \psi_1$ and $\d \psi_2$ must be bigger than or equal to 1. As such for any 1-form $g$, there are finitely many equivalence classes $[\psi]$ such that $2\pi \d \psi$ is a (infinity norm) distance less than or equal to $4\pi$ from $\d g$. This concludes the proof of the fact.

		\item \textit{The measure is invariant under shifts in $\Omega^{1\shortrightarrow 2}$.} For this, let us take a (real-valued) $1$-form $h'$ and let us compute
		\begin{align}\label{e.shift_invariant}
		\mu(A-\d h')&= \int_{[-\pi,\pi)^{E}} \sum_{[\psi]} \1_{A}(\d \theta + 2\pi \d \psi +2\pi\d \d^* \Delta^{-1} n_\qq + \d h' ) d\theta,
		\end{align}
		Now, let us fix $\theta$ and define
		\begin{align*}
		&\theta'= \theta + h' \mod 2\pi\\
		&\psi'= \frac{1}{2\pi}(\theta + h' - \theta ').
		\end{align*}
		Noting that $\psi'$ takes always its values on the integers, we see that
		\begin{align*}
		\mu(A-\d h')&= \int_{[-\pi,\pi)^{E}} \sum_{[\psi]} \1_{A}(\d \theta ' + 2\pi \d (\psi+\psi') + 2\pi\d \d^* \Delta^{-1} n_\qq ) d\theta,\\
		&=\int_{[-\pi,\pi)^{E}} \sum_{[\psi]} \1_{A}(\d \theta ' + 2\pi \d\psi + 2\pi\d \d^* \Delta^{-1} n_\qq ) d\theta,\\
		&=\sum_{[\psi]}\int_{[-\pi,\pi)^{E}}  \1_{A}(\d \theta + 2\pi \d\psi+ 2\pi\d \d^* \Delta^{-1} n_\qq ) d\theta = \mu(A).
		\end{align*}
\item {\em  The measure of the unit ball does not depend on $q$.} This follows from the above proof applied to $h' = 2\pi \d^* \Delta^{-1} n_\qq$. 		
	\end{enumerate}
\end{proof}


\begin{remark}\label{r.ImpSpW} 
	In the proof of Proposition \ref{p.decouplingF}, the key step was to recognize the Lebesgue measure $\calL_{1\to 2}$ on $\Omega^{1\shortrightarrow 2}$ as this allowed us to make the link with our gradient spin-wave as defined in Proposition  \ref{r.SW}. This proof strongly relies on the bijection using equivalent classes in Proposition \ref{p.bijection}. Note that one may have tried using the following two other natural ways to deal with the above quotienting issue.
		\bnum
		\item A  cut-off procedure, which would correspond to summing over all  integer $1$-forms $\psi$ with values in $[-K,K]^{C^1(\Lambda)}$ and then letting $K\to \infty$. The difficulty here is to handle the "boundary issues" near $\p [-K,K]^{C^1(\Lambda)}$. 
		\item Adding a small mass $\mathbf{m}$ to the spin-wave and letting $\mathbf{m}\to 0$. This is related to the approach followed in \cite{FSrestoration}. The disadvantage of this approach here is that it also affects the gradient spin-wave measure, i.e.  $d\varphi^{\mathbf{m}} \neq d\varphi$ and as such was less convenient for our present decoupling. 
		\enum
As shown above, we instead relied on the characterization of the Lebesgue measure on $\d \Omega^1(\Lambda)$ by its invariance under shifts. 
	\end{remark}

Let us now prove a simple corollary which will allow us to compute the Fourier transform of $\d \theta  + 2\pi m$. 
\begin{corollary}\label{c.correlation}
Let $(\theta,m)$ be a Villain $U(1)$ lattice gauge coupling in a finite graph $\Lambda$ (with any boundary condition) and $\varrho$ be a spin-wave at inverse temperature $\beta$ with the same boundary condition. Then, we have that for any $2$-form $f$
\begin{align*}
\E_{\beta}^{Vil}\left[e^{i\langle \d \theta + 2\pi m, f\rangle} \right] = \E_{\beta}^{GSW}\left[e^{i\langle \varrho ,  f \rangle} \right] \E_{\beta}^{Vil}\left[e^{i2\pi \langle m,\proj{3}{2} f \rangle} \right]. 
\end{align*}
In particular, as $m$ takes values in the integers, we have that if $f$ is a $2$-form taking only integer values 
\begin{align}\label{e.Fourier_decoupling_integer_f}
\E_{\beta}^{Vil}\left[e^{i\langle \d \theta, f\rangle} \right] & = \E_{\beta}^{GSW}\left[e^{i\langle \varrho , f \rangle} \right] \E_{\beta}^{Vil}\left[e^{i2\pi \langle m,\proj{3}{2} f \rangle} \right] \\
& =  \E_{\beta}^{GSW}\left[e^{i\langle \varrho , f \rangle} \right] \E_{\beta}^{Coul}\left[e^{i2\pi \langle \d^* \Delta^{-1} q, f \rangle} \right] \,,\nonumber
\end{align}	
(where in the last equality, $\d^*$ and $\Delta$ should be replaced by $\mathring \d^*$ and $\mathring \Delta$ in case of zero-boundary conditions for the Villain lattice gauge theory on $\Lambda$). 
\end{corollary}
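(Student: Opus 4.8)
The plan is to obtain this as a direct consequence of the decoupling Proposition \ref{p.decouplingF}, after splitting $\d\theta + 2\pi m$ along the orthogonal decomposition $\Omega^2 = \Omega^{1\rightarrow 2}\oplus\Omega^{3\rightarrow 2}$ (and $\mathring\Omega^2 = \mathring\Omega^{1\rightarrow 2}\oplus\mathring\Omega^{3\rightarrow 2}$ for zero boundary conditions). By Lemma \ref{l.proj} one has $\proj{1}{2} + \proj{3}{2} = \d\d^*\Delta^{-1} + \d^*\d\Delta^{-1} = \mathrm{Id}$ on $\Omega^2$ (there is no harmonic remainder, precisely because $\Delta$ is invertible on $\Omega^2$ since $0<2<4$), and the two images are mutually orthogonal by Proposition \ref{p.basic_calculus}. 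Since $\d\theta\in\Omega^{1\rightarrow 2}$ we have $\proj{3}{2}\d\theta = 0$, so that $\proj{1}{2}(\d\theta + 2\pi m) = \d\theta + 2\pi\proj{1}{2} m = \varrho$ is exactly the spin-wave of Proposition \ref{p.decouplingF}, while $\proj{3}{2}(\d\theta + 2\pi m) = 2\pi\proj{3}{2} m = 2\pi\d^*\d\Delta^{-1} m = 2\pi\d^*\Delta^{-1} q$, a deterministic function of $q = \d m$ (here we use that $\d$ commutes with $\Delta^{-1}$).

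Then for any $2$-form $f$ I would write, using that the summands of $f = \proj{1}{2} f + \proj{3}{2} f$ are orthogonal and that $\proj{1}{2},\proj{3}{2}$ are self-adjoint,
\[
\langle \d\theta + 2\pi m, f\rangle = \langle\varrho, f\rangle + 2\pi\langle m, \proj{3}{2} f\rangle ,
\]
where $\langle\varrho,f\rangle$ depends only on $\varrho$ and $\langle m,\proj{3}{2}f\rangle = \langle\d^*\Delta^{-1} q, f\rangle$ depends only on $q$. Part (1) of Proposition \ref{p.decouplingF} gives the independence of $\varrho$ and $q$, hence
\[
\E_\beta^{Vil}\bigl[e^{i\langle\d\theta + 2\pi m, f\rangle}\bigr] = \E_\beta^{Vil}\bigl[e^{i\langle\varrho,f\rangle}\bigr]\,\E_\beta^{Vil}\bigl[e^{i2\pi\langle m, \proj{3}{2} f\rangle}\bigr] ,
\]
and part (3) identifies the law of $\varrho$ with the gradient spin-wave, so the first factor equals $\E_\beta^{GSW}[e^{i\langle\varrho,f\rangle}]$. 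This is the first displayed identity.

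For the ``in particular'' statement, when $f$ is integer-valued we have $\langle 2\pi m, f\rangle = 2\pi\sum_{w\in C^2} m(w) f(w)\in 2\pi\Z$ since $m\in\Omega^2_\Z$, so $e^{i\langle\d\theta + 2\pi m, f\rangle} = e^{i\langle\d\theta, f\rangle}$ almost surely; plugging this into the identity just proved gives the first line of \eqref{e.Fourier_decoupling_integer_f}. For the second line, rewrite $\langle m,\proj{3}{2} f\rangle = \langle\proj{3}{2} m, f\rangle = \langle\d^*\Delta^{-1} q, f\rangle$ and use part (2) of Proposition \ref{p.decouplingF}, namely that $q$ has the law of the Coulomb gas, to replace $\E_\beta^{Vil}$ by $\E_\beta^{Coul}$. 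The zero-boundary case follows by the same argument with $\d^*,\Delta,\proj{\cdot}{\cdot}$ replaced throughout by $\mathring\d^*,\mathring\Delta,\projc{\cdot}{\cdot}$.

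There is no genuine obstacle here: the statement is essentially a bookkeeping corollary of the decoupling. The only points requiring care are matching the correct projection with the spin-wave versus the Coulomb contribution, using self-adjointness of the orthogonal projections to transfer them onto the test form $f$, and checking that $\proj{3}{2} m$ is $\sigma(q)$-measurable, which is where the commutation $\d\Delta^{-1} = \Delta^{-1}\d$ enters.
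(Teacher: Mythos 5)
Your proposal is correct and follows essentially the same route as the paper: both rest on the identity $\d\theta + 2\pi m = \varrho + 2\pi\proj{3}{2}m = \varrho + 2\pi\d^*\Delta^{-1}q$ together with the independence and law identifications from Proposition \ref{p.decouplingF} (the paper decomposes the random $2$-form while you decompose the test form $f$, but by self-adjointness of the projections these are the same computation). The only difference is that you spell out the bookkeeping — $\proj{1}{2}+\proj{3}{2}=\mathrm{Id}$, orthogonality, and the $\sigma(q)$-measurability of $\proj{3}{2}m$ — slightly more explicitly than the paper does.
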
 

\begin{proof}
	The proof for both boundary condition is analogous and straighforward. We do it for the free-boundary condition.
	
Let us work with the same notation as that of Proposition \ref{p.decouplingF}. We note that
\begin{align*}
\d \theta + 2\pi m &= \varrho + 2\pi m - (2\pi)\proj{1}{2}m \\
&= \varrho + (2\pi) \proj{3}{2} m \\
&= \varrho + (2\pi) \d^* \Delta^{-1} q.
\end{align*}
Thus, as $q$ is independent of $\varrho$, we have that
\begin{align*}
\E_{\beta}^{Vil}\left[e^{i\langle \d \theta + 2\pi m, f\rangle} \right]&= \E_{\beta}^{GSW}\left[e^{i\langle \varrho , f \rangle} \right] \E_{\beta}^{Coul}\left[e^{i2\pi \langle \d^* \Delta^{-1} q,f \rangle} \right] \\
&= \E_{\beta}^{GSW}\left[e^{i\langle \varrho ,f \rangle} \right] \E_{\beta}^{Vil}\left[e^{i2\pi \langle m,\proj{3}{2} f \rangle} \right],
\end{align*}
where in the last line we used that
\begin{align*}
 \d^* \Delta^{-1} q = \d^* \d \Delta^{-1} m = \proj{3}{2} m.
\end{align*} 
\end{proof}
%
%
%
%
%
%

\section{Energy of a Wilson loop}\label{s.energy}
The goal in this section is to analyze the law of the random variable $\langle \varrho, \1_R\rangle$ where  $R$ is a two-dimensional rectangle in $\Z^4$ and $\varrho$ is a gradient spin-wave (Definition \ref{d.SW} and Proposition \ref{r.SW}). If the boundary of $R$ is given by the loop $\gamma$, i.e. if $\p R =\gamma$, this is also the law of $\langle \phi, \d^*\1_R\rangle = \langle \phi, \1_{\gamma}\rangle$.

Note that $\langle \varrho, \1_R \rangle$ is a centred Gaussian random variable with variance
\begin{align*}
\| \proj{1}{2} \1_R \|^2 = \langle \d \d^* \Delta^{-1} 1_R, \d \d^* \Delta^{-1} 1_R \rangle= \langle \d \d^* \Delta^{-1} 1_R, 1_R \rangle = - \langle \Delta^{-1} \1_{\gamma}, \1_{\gamma} \rangle.
\end{align*}
Here, depending on which graph we are working on, we use either the Laplacian for the given boundary condition or for the infinite volume case.

\begin{proposition}\label{p.energy_wilson}
	Assume that $R$ is a rectangle with height $H$ and length $L$ in a graph $\Lambda$, we have that as long as $ L^{3/4} \leq H \leq L^{4/3}$ and the distance between $R$ and the boundary of $\Lambda$ is bigger than $L$, then 
	\begin{align*}
\| \proj{1}{2} \1_R \|^2 =  C_{GFF} \, 2(L+H)+ O(L^{2/3}),
	\end{align*}
	where $C_{GFF}$ is the constant defined in \eqref{e.cGFF}.
	In particular, if $\varrho$ is gradient spin-wave with either boundary condition, we have that
	\begin{equation}
	\E_\beta^{GSW}\left[e^{i\langle \varrho, \1_R \rangle} \right] =e^{-\frac{C_{GFF}}{\beta}(L+H +O(L^{2/3}))}
	\end{equation}
\end{proposition}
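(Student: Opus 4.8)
The key identity is that the variance $\|\proj{1}{2}\1_R\|^2$ equals $-\langle \Delta^{-1}\1_\gamma,\1_\gamma\rangle$, where $\gamma=\partial R$ is viewed as a $1$-form (a signed sum of $2(L+H)$ unit edges lying along two directions, say $e_1$ and $e_2$). First I would reduce to the infinite-volume Green's function: by Lemma \ref{l.approximation2}, replacing $G_{\G_i(\Lambda)}$ (or its $0$-boundary analogue) by $G_{\G_i}$ on $\Z^n$ costs at most $O(M^{-(n-2)})=O(L^{-2})$ per pair of edges, and there are $O(L^2)$ pairs, so this replacement contributes only $O(1)$ — absorbed in the $O(L^{2/3})$ error. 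So it suffices to compute $-\langle\Delta^{-1}\1_\gamma,\1_\gamma\rangle$ on $\Z^4$.

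Next, by Corollary \ref{c.Green_1} and Definition \ref{d.Green_1}, $(-\Delta)^{-1}(e,e')$ vanishes unless $e,e'$ point in the same coordinate direction, so the quadratic form splits over the $e_1$-edges of $\gamma$ and the $e_2$-edges of $\gamma$ separately; there are no cross terms. Each piece is of the form $\sum_{e,e'} \pm G_{\G_i}(e,e')$ over the (two opposite) sides of $R$ pointing in direction $e_i$. For the $e_1$-direction, $\gamma$ contributes the bottom side (length $L$, one orientation) and the top side (length $L$, opposite orientation); similarly with $L\leftrightarrow H$ in the $e_2$-direction. Identifying $\G_i$ with $\Z^n$, this reduces to computing, for a straight segment $S$ of $m$ consecutive collinear unit edges, the quantity $\sum_{e,e'\in S} G_{\Z^n}(e,e')$ and the cross-interaction $\sum_{e\in S, e'\in S'} G_{\Z^n}(e,e')$ between $S$ and its translate $S'$ a distance $\asymp H$ (resp. $\asymp L$) away. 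Using Proposition \ref{p.Green_vertices} (so $G_{\Z^n}(x,y)=C_n\|x-y\|^{-(n-2)}+O(\|x-y\|^{-n})$ with $n=4$), the cross term between two far-apart parallel segments is $O(m^2 H^{-2})$, which under $L^{3/4}\le H\le L^{4/3}$ is $O(L^{2/3})$ and thus absorbed in the error. The diagonal/self term $\sum_{e,e'\in S} G(e,e')$ is the main contribution: for a segment of $m$ edges, this is $\sum_{e\in S}\big(\sum_{e'\in S} G(e,e')\big)$, and for an edge $e$ well inside $S$ the inner sum is $\sum_{k\in\Z} G_{\Z^4}(0,k\,e_1) + O(\text{tail}) = C_{GFF}+O(m^{-1})$ by \eqref{e.cGFF} and the $\|x-y\|^{-2}$ decay (the tail beyond distance $d$ from the end is $O(d^{-1})$), while the $O(m)$ edges near the two ends of $S$ each contribute $O(C_{GFF})$, hence an $O(m^{2/3})$ correction after optimizing the cutoff. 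Summing the four sides: the two $e_1$-sides give $2L\cdot C_{GFF}$ and the two $e_2$-sides give $2H\cdot C_{GFF}$, with total error $O(L^{2/3})$, yielding $\|\proj{1}{2}\1_R\|^2 = C_{GFF}\,2(L+H)+O(L^{2/3})$.

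The final statement about $\E_\beta^{GSW}[e^{i\langle\varrho,\1_R\rangle}]$ is then immediate: $\langle\varrho,\1_R\rangle$ is a centred Gaussian of variance $\|\proj{1}{2}\1_R\|^2/\beta$ by Proposition \ref{r.SW} (the $1/\beta$ coming from the GFF normalization), so $\E[e^{i\langle\varrho,\1_R\rangle}]=\exp(-\tfrac{1}{2\beta}\|\proj{1}{2}\1_R\|^2)=\exp\big(-\tfrac{C_{GFF}}{\beta}(L+H+O(L^{2/3}))\big)$, using that $\tfrac12\cdot 2(L+H)=L+H$.

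**Main obstacle.** The delicate point is the error analysis for the self-interaction $\sum_{e,e'\in S}G_{\Z^4}(e,e')$ of a single side: one must show that the boundary effects at the two ends of the segment, together with the tails of the sum $\sum_k G_{\Z^4}(0,ke_1)$ truncated at finite range, combine to give exactly $O(m^{2/3})$ (not $O(m)$), which is what forces the hypothesis $L^{3/4}\le H\le L^{4/3}$ and the shape of the error term; this requires the quantitative decay $G_{\Z^4}(0,ke_1)=\Theta(k^{-2})$ from Proposition \ref{p.Green_vertices} and a careful choice of the truncation scale. The far-segment cross-term bound and the passage to infinite volume are comparatively routine given Lemma \ref{l.approximation2} and Corollary \ref{c.Green_1}.
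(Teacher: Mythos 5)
Your proposal follows essentially the same route as the paper's proof: split the quadratic form $-\langle\Delta^{-1}\1_\gamma,\1_\gamma\rangle$ by coordinate direction via Corollary \ref{c.Green_1}, extract $C_{GFF}$ per edge from the self-interaction of each side of $\gamma$, control the cross-interaction of opposite sides and the end/corner effects by a distance cutoff, and pass from $\Z^4$ to finite volume with Lemma \ref{l.approximation2}. Two bookkeeping slips are worth fixing: the number of edge pairs on $\gamma$ is $O((L+H)^2)=O(L^{8/3})$, not $O(L^2)$, so the finite-volume replacement costs $O(L^{2/3})$ rather than $O(1)$ (still within the stated error); and optimizing the end-cutoff trade-off $d\mapsto O(d)+O(m/d)$ for a side of $m$ edges yields $O(m^{1/2})$, not $O(m^{2/3})$ --- this matters, since $O(H^{2/3})$ could be as large as $L^{8/9}$ and would overrun the claimed error, whereas the correct $O(H^{1/2})\le O(L^{2/3})$ is exactly what is needed.
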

\begin{proof}
To simplify the proof, we work with $\Lambda= \Z^4$, and at the end we will  discuss how to extend it to finite graphs whose boundary is at sufficient distance from $R$.

Let us separate $\gamma$ in its horizontal and vertical part $\gamma^h$, $\gamma^v$
so that 
\begin{align*}
\| \proj{1}{2} \1_R \|^2 & =\langle \Delta^{-1} \1_{\gamma}, \1_{\gamma} \rangle \\
&  = \sum_{e \in \gamma^h}\Delta^{-1} \1_{\gamma^h}(e) + \sum_{e \in \gamma ^v}\Delta^{-1} \1_{\gamma^v}(e)\,.
\end{align*}

%

Without loss of generality we need to show that
\begin{align}\label{e.hminus}
\sum_{e \in \gamma^{h,-}}\Delta^{-1} \1_{\gamma^h}(e) = - C_{GFF}L + O(L^{1/2}),
\end{align}
where $\gamma^{h,-}$ is the lower part of $\gamma^{h}$.
Note that thanks to Corollary \ref{c.Green_1}, only horizontal edges contribute to $\Delta^{-1} 1_{\gamma}(e)$ when $e$ is horizontal. This gives us
\begin{align*}
-\Delta^{-1} \1_{\gamma^h}(e)&=\sum_{e'\in\gamma^{h}} -(\Delta^{-1})(e,e')\\
&= \sum_{e'\in\gamma^{h,-}} -(\Delta^{-1})(e,e') +\sum_{e'\in\gamma^{h,+}} -(\Delta^{-1})(e,e')\\
& \leq \sum_{e' \in \gamma^{h,-}_\infty} -(\Delta^{-1})(0, e') +  \sum_{e'\in\gamma^{h,+}_\infty} -(\Delta^{-1})(e,e')
\end{align*}
where $\gamma^{h,-}_\infty$ is the infinite line that passes through $\gamma^{h,-}$ and $\gamma^{h,+}_\infty$ is the infinite line that passes through $\gamma^{h,+}$. The term on the right is the expected number of visits of an infinite line $\Z^4$ for a SRW starting at distance $H$ from it. Standard Green function estimates  in $\Z^4$  (Proposition \ref{p.Green_vertices}) show that this is of order $H^{-1}$ which is itself less than $L^{-1/2}$.

To obtain a lower bound, note that for all edges $e$ that are at distance  bigger than $\sqrt{L}
$ of the corner of the rectangle, then we have 
\begin{align*}
 C_{GFF}-\sum_{e'\in\gamma^{h,-}} -(\Delta^{-1})(e',e) \leq O(1) \sum_{k\geq \sqrt{L}} k^{-2} \leq O(1) L^{-1/2}\,.
\end{align*}
This ends the proof of the etimate~\eqref{e.hminus}. The vertical sides are handled the same way (except we need to exclude points at distance $L^{2/3}$ from the corners) 
which thus concludes the proof in the case of the infinite volume limit. 

The result in the case of a finite graph is done by noting that 
\begin{align*}
&|(\Delta_{\Z^4}^{-1})(e',e)-(\Delta_{\Lambda}^{-1})(e',e)| =O\left (\frac{1}{L^2}\right )\\
&|(\mathring \Delta_{\Z^4}^{-1})(e',e)-(\mathring \Delta_{\Lambda}^{-1})(e',e)| =O\left (\frac{1}{L^2}\right )
\end{align*}
for all $e, e' \in \gamma$. This gives at most a correction of order $O(1) (L^{4/3})^2*L^{-2}=O(L^{2/3})$ as desired. 

\end{proof}

We conclude this section by analyzing how {\em well-spread}  the energy of $\proj{1}{2} \1_R$ is.
\begin{proposition}\label{p.spread_energy}
	Let us work in the context of Proposition \ref{p.energy_wilson}. Then, for all $b>0$ there exists a constant $C=C(b)>0$ such that
	\begin{align*}
\|(\proj{1}{2} \1_R) \1_{|\proj{1}{2} \1_R|<b}\|^2&=\langle(\proj{1}{2} \1_R) \1_{|\proj{1}{2} \1_R|<b},(\proj{1}{2} \1_R) \1_{|\proj{1}{2} \1_R|<b} \rangle\\
&\geq C \langle \proj{1}{2} \1_R, \proj{1}{2} \1_R \rangle.
	\end{align*}
	The same is true for zero-boundary condition, i.e.,
	\begin{align*}
\|(\projc{1}{2} \1_R) \1_{|\projc{1}{2} \1_R|<b}\|^2\geq C\langle \projc{1}{2} \1_R, \projc{1}{2} \1_R \rangle.
	\end{align*}
\end{proposition}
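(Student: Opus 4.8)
The goal is to show that truncating $\proj{1}{2}\1_R$ to the event where its pointwise values are bounded by $b$ only loses a constant fraction of the Dirichlet energy $\langle \proj{1}{2}\1_R,\proj{1}{2}\1_R\rangle$. The natural strategy is to show that the function $g:=\proj{1}{2}\1_R = \d\d^*\Delta^{-1}\1_R$ already has uniformly bounded $\ell^\infty$-norm, so that for $b$ larger than that uniform bound the indicator $\1_{|g|<b}$ is identically $1$ on the relevant support and there is nothing to lose. Indeed, by Lemma~\ref{l.proj} we have $g(f) = \langle \d\d^*\Delta^{-1}\1_R, \1_f\rangle = \langle \Delta^{-1}\1_R, \d\d^*\1_f\rangle$, and since $\1_R$ is supported on the (two-dimensional) plaquettes of $R$, the value $g(f)$ is a finite linear combination of entries $(\Delta^{-1})(f',f)$ with $f'$ ranging over plaquettes of $R$; summing the Green's function over a two-dimensional sheet in $\Z^4$ against a bounded-degree local operator gives a quantity bounded uniformly in $R$ and in $f$, exactly as in the proof of Proposition~\ref{p.energy_wilson} (where the analogous sum over the line $\gamma^{h,-}$ was shown to be $O(1)$, in fact convergent). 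This uses Proposition~\ref{p.Green_vertices} for the $\|x-y\|^{-2}$ decay in $\Z^4$ and Lemma~\ref{l.approximation2}/Lemma~\ref{l.approximation} to transfer the bound to finite $\Lambda$ and to zero-boundary conditions.

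First I would record that $\proj{1}{2}\1_R = \d\d^*\Delta^{-1}\1_R$ and rewrite $g(f)$, for a $2$-cell $f$, as a sum over the at most $O(1)$ plaquettes $f'$ neighbouring $f$ of the Green-type quantities $(\Delta^{-1})(f',f)$ paired against $\1_R$. Next, using Corollary~\ref{c.Green_1} to reduce to the scalar Green's function on the graphs $\G_i$, and the estimate $G_{\G_i}(e,e')=O(\|e-e'\|^{-2})$ from Proposition~\ref{p.Green_vertices}, I would bound $\sum_{f'\in R}|(\Delta^{-1})(f',f)|$ by a sum of the form $\sum_{x\in R}\|x-y\|^{-2}$; since $R$ is a two-dimensional rectangle this sum is $\sum_{r\geq 1} r\cdot r^{-2} = \sum_{r\geq 1} r^{-1}$ restricted appropriately — wait, this is only convergent when $f$ is far from $R$; when $f$ is near or on $R$ the sum is $O(\log L)$ rather than $O(1)$. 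So the clean statement is $\|g\|_\infty \le K$ for an absolute constant $K$ once one notes that the relevant contributions come from $f$ within $O(1)$ of the support structure and the apparent log divergence is killed by the extra $\d\d^*$ (a discrete second difference of the Green's function decays like $\|x-y\|^{-4}$, which is summable over a $2$-sheet). I would make this precise by writing $\d\d^*(\Delta^{-1}\1_R)(f)$ as a second difference and invoking the gradient estimate on the Green's function (again from the references behind Proposition~\ref{p.Green_vertices}).

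Once $\|\proj{1}{2}\1_R\|_\infty \le K$ is established uniformly in $R$ and $\Lambda$, the conclusion is immediate: for any $b > K$ the truncation is trivial, $(\proj{1}{2}\1_R)\1_{|\proj{1}{2}\1_R|<b} = \proj{1}{2}\1_R$, hence equality holds with $C=1$; and for $b \le K$ one reduces to the previous case by a crude comparison, noting that only finitely many $2$-cells near the corners could have $|g(f)|$ close to $K$ and their removal changes the energy by $O(L^{2/3})$ (or less), which is negligible compared to $\langle \proj{1}{2}\1_R,\proj{1}{2}\1_R\rangle \asymp L+H$ by Proposition~\ref{p.energy_wilson}, so one may take $C = C(b) > 0$ small but positive. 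The zero-boundary case is handled identically, replacing $\Delta, \d^*$ by $\mathring\Delta, \mathring\d^*$ and using the zero-boundary halves of Corollary~\ref{c.Green_1} and Lemma~\ref{l.approximation2}.

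\textbf{Main obstacle.} The delicate point is the uniform $\ell^\infty$ bound on $\proj{1}{2}\1_R = \d\d^*\Delta^{-1}\1_R$: a naive bound using only $|G|=O(\|x-y\|^{-2})$ summed over the two-dimensional sheet $R$ gives a spurious logarithmic divergence in $L$, so one genuinely needs the improved decay $O(\|x-y\|^{-4})$ of the discrete second difference $\d\d^*G$ (equivalently, to exploit the cancellation in $\d\d^*$), together with care near the corners of $R$ where the geometry of the sheet degenerates. Everything else is bookkeeping and an application of Proposition~\ref{p.energy_wilson}.
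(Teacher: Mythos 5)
There is a genuine gap, and it sits exactly where the proposition has its content. Your argument splits into two cases: for $b>K:=\|\proj{1}{2}\1_R\|_\infty$ the truncation is trivial (this part is fine --- note that the $\ell^\infty$ bound is most cleanly seen by writing $\proj{1}{2}\1_R=\d\Delta^{-1}\d^*\1_R=\d\Delta^{-1}\1_\gamma$ and summing the $\|e-e'\|^{-2}$ decay over the \emph{one}-dimensional curve $\gamma$, which converges; the two-dimensional sheet never enters). But for $b\le K$ you claim that only finitely many $2$-cells near the corners can have $|\proj{1}{2}\1_R(f)|\ge b$, with an energy cost $O(L^{2/3})$. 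This is false: the value of $\proj{1}{2}\1_R$ at a cell at distance $k$ from a side of $\gamma$ is of order $\sqrt{\epsilon(k)}$ with $\epsilon(k)\asymp k^{-2}$ bounded below by a fixed positive constant for $k=O(1)$, uniformly along the whole side. Hence for a small fixed $b$ the excluded set $\{f:|\proj{1}{2}\1_R(f)|\ge b\}$ contains all $\Theta(L+H)$ cells within distance $O_b(1)$ of the sides, and these carry a \emph{constant fraction} of the total energy $\asymp C_{GFF}\cdot 2(L+H)$ --- not $O(L^{2/3})$. Your reduction therefore does not establish that what survives the truncation is still $\ge C(b)(L+H)$, which is precisely the assertion to be proved. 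Moreover, the small-$b$ case is the only one used downstream: in the proof of Theorem \ref{th.main_finite} one takes $b<\bar b(1)\wedge\tfrac{1}{2K_\beta}$, so the regime your argument covers is not the relevant one.

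The missing ingredient is a quantitative \emph{lower} bound on the profile of $\proj{1}{2}\1_R$ away from $\gamma$. The paper shows, via Corollary \ref{c.Green_1} and Proposition \ref{p.Green_vertices}, that for a $2$-cell $f$ at distance $k$ from one side of $R$ and at distance $\ge\sqrt{L}$ from the others,
\begin{align*}
|\proj{1}{2}\1_R(f)|^2=\epsilon(k)-O(k^{-3})-O(L^{-1/2}),\qquad \epsilon(k)=2C_4\sum_{e\in\gamma_{h,\infty}}\|e_1-e\|^{-3}=\Omega(k^{-2}),
\end{align*}
with $\epsilon(k)\searrow 0$. Given $b$, one picks $k=k(b)$ so that all cells at distance $\ge k(b)$ from the sides have value $<b$ (hence survive the truncation), and the $\sim 2(L+H)$ cells at distance exactly $k(b)$ alone contribute energy $\gtrsim k(b)^{-2}(L+H)$, i.e.\ a $b$-dependent constant times $\langle\proj{1}{2}\1_R,\proj{1}{2}\1_R\rangle$. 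Without this two-sided control of the profile at a tunable distance from $\gamma$, the proposition does not follow.
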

\begin{remark}
Note that $C(b)$ increases as $b$ increases.
\end{remark}
\begin{proof}
	The proof is equivalent for either boundary condition. We will do the proof for free-boundary condition.
	
	To prove this proposition, we will study what the value of $\proj{1}{2} \1_R$ is at a given distance $k$ from the boundary (the smaller $b$ is, the larger $k$ will be). More precisely, we will show that for any 2-cell $f\in R$ that is at distance $k$ of one side of the rectangle and at distance at least $\sqrt{L}$ from the other sides,  
	\begin{align}\label{e.epsilon(k)}
	|\proj{1}{2} \1_R(f)|^2 = \epsilon(k) - O(k^{-3}) - O(L^{-{1/2}})\,,
	\end{align}
uniformly on all the graphs $\Lambda$ whose boundary is at distance \dgreen{$L$} of $R$. Here $\epsilon: \Z\mapsto (0,1)$ is a function going to $0$ as $k\nearrow \infty$ which satisfies $\eps(k)\geq \Omega(k^{-2})$, it is defined below in~\eqref{e.epsk}. It is clear that \eqref{e.epsilon(k)} is enough as  the energy associated to points at distance $k$ of the side of the rectangle  will already have an energy larger than $\epsilon(k)(L+H) -O(L^{-1/2})$.
	
	We now show \eqref{e.epsilon(k)}. Let us first work with $\Lambda= \Z^4$,  the result for finite $\Lambda$ will follow again by using Lemma \ref{l.approximation2}. In this case, we have the identity\footnote{As pointed out in Remark \ref{r.Odecomp}, this identity in the infinite volume case requires some care. The reader may either prove this identity for local functions $f$ (such as $f=\1_R$ here) or otherwise make all the computations in this proof in a very large cube $\hat \Lambda$ and then only at the level of the obtained estimates pass to the infinite volume limit $\hat \Lambda \nearrow \Z^n$. This other option is feasible as both the gradient spin-wave and the $U(1)$-Villain lattice gauge theory have well-defined infinite volume limits (Proposition \ref{pr.IVL}).}
	\[\proj{1}{2} \1_R = \d \Delta^{-1} \1_{\gamma}.\]

	To compute $|\proj{1}{2} \1_R(f)|^2$, we use Corollary \ref{c.Green_1}. Assume that $f\in R$ is at distance $k$ of the horizontal line of $R$ and at distance greater than or equal to $\sqrt L$ of the vertical lines. Then, write $e_1$ and $e_3$ the two horizontal edges of $f$ and $e_2$ and $e_4$ the two vertical edges. We chose $e_1$ and $e_3$ being parallel with the same direction, the same as $e_2$ and $e_4$. Thus,
	\begin{align*}
	\d \Delta^{-1}\1_{\gamma}(f)& = \Delta^{-1}\1_{\gamma}(e_1)-\Delta^{-1}\1_{\gamma}(e_3) +\Delta^{-1}\1_{\gamma}(e_2)-\Delta^{-1}\1_{\gamma}(e_4) \\
	&=  \Delta^{-1}\1_{\gamma}(e_1)-\Delta^{-1}\1_{\gamma}(e_3) +O\left (\frac{1}{\sqrt L} \right )
	\end{align*}
	were the last equation is just done by using the fact that the inverse Laplacian on 1-forms for vertical edges $e_2$ and $e_4$ needs to go at distance at least $\sqrt{L}$ to find lines of other vertical edges. We now, use Proposition \ref{p.Green_vertices} to estimate that 
	\begin{align*}
	\Delta^{-1}\1_{\gamma}(e_1)-\Delta^{-1}\1_{\gamma}(e_3) & =  \sum_{e \in \gamma_h} C_4 (4-2)  \frac{1}{\|e_1-e\|^{4-1}} + O\left (\frac{1}{\|e_1-e\|^{4}}\right ) \\
	& = 2 C_4 \sum_{e\in \gamma_{h,\infty}} \frac{1}{\|e_1-e\|^3} +  O(k^{-3}) + O(L^{-1})\,,
	\end{align*}
where $\gamma_{h,\infty}$ is the infinite continuation of the horizontal line $\gamma_h$ closer to $e$ and where the correction term $O(L^{-1})$ arises from the difference between $\gamma_{h,\infty}$ and $\gamma_{h}$ while the correction term $O(k^{-3})$ arises from the sum of correction terms $O(\|e_1-e\|^{-4})$.   This  concludes the proof by defining
\begin{align}\label{e.epsk}
\epsilon(k)&:= 2 C_4 \sum_{e\in \gamma_{h,\infty}} \frac{1}{\|e_1-e\|^3}
\end{align}
which is indeed larger than $\Omega(k^{-2})$. 
\end{proof}

\section{Corollaries of the spin-wave decoupling}\label{s.coro}

In this short section, we give several easy corollaries of the spin-wave decoupling property (Proposition \ref{p.decouplingF}). Except in the last subsection (on a sampling algorithm  for Coulomb gas in dimensions $n\geq 3$), they do not require the novelty of the joint coupling $(\theta,m)$ which will be needed for our main statement on the spin-wave improvement. Yet, they allow us to recover at once different celebrated results from the literature and they also shed some light on the connection with the recent results proved for discrete lattice gauge theory in \cite{Sourav,malin,cao}. 

\subsection{Decoupling in terms of partition functions.}
We start with the following straightforward corollary of the Coulomb/spin-wave decoupling for the partition functions of the models. 

\begin{corollary}\label{l.decoupling_partition_function}
	We have the following decoupling result for the partition function of the $U(1)$-lattice gauge theory on the graph $\Lambda_j:=[-j,j]^4\cap \Z^4$
	\begin{align*}
	Z^{Vil}_{\beta,\Lambda_j}= \cons Z^{GSW}_{\beta,\Lambda_j}Z^{Coul}_{\beta,\Lambda_j}\,,
	\end{align*}
	where the constant $\cons$ only depends on the graph $\Lambda$ and its boundary conditions.
	(Here, all the partition function have the same boundary condition: either zero or free boundary condition). 
\end{corollary}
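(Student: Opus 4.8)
The plan is to derive this factorization directly from the proof of the decoupling statement, Proposition \ref{p.decouplingF}, by tracking the normalizing constants that appear there. Recall that in the course of that proof we computed, for any continuous test functions $W_1,W_2$ on the $2$-forms, that $\E_\beta^{Vil}[W_1(\varrho)W_2(q)]$ equals (up to the factor $1/Z_\beta^{Vil}$) the double sum
\begin{align*}
\sum_{\d q=0} \left( W_2(q) e^{-\frac{\beta(2\pi)^2}{2}\langle q,(-\Delta)^{-1}q\rangle} \sum_{[\psi]} \int_{[-\pi,\pi)^{C^1}} W_1(\varrho)\, e^{-\frac\beta2 \langle \varrho,\varrho\rangle}\, d\theta \right),
\end{align*}
and that Claim \ref{c.Lebesgue_1_2} identifies the inner object $\sum_{[\psi]}\int_{[-\pi,\pi)^{C^1}}(\cdot)\,d\theta$ with $\cons$ times the Lebesgue measure $\calL_{1\shortrightarrow 2}$ on $\Omega^{1\shortrightarrow 2}$, with $\cons$ independent of both $q$ and $\beta$.

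First I would take $W_1\equiv 1$ and $W_2\equiv 1$ in that identity. The left-hand side is then simply $1$, while the right-hand side becomes $\frac{1}{Z_\beta^{Vil}}$ times
\begin{align*}
\left( \sum_{\d q=0} e^{-\frac{\beta(2\pi)^2}{2}\langle q,(-\Delta)^{-1}q\rangle}\right)\cdot \cons \cdot \left( \int_{\Omega^{1\shortrightarrow 2}} e^{-\frac\beta2\langle\varrho,\varrho\rangle}\, d\calL_{1\shortrightarrow 2}(\varrho)\right).
\end{align*}
By Definition \ref{d.Coulomb} the first parenthesis is exactly $Z^{Coul}_{\beta,\Lambda_j}$, and by Proposition \ref{r.SW} (specifically the density \eqref{e.GSW_free_density}) the last parenthesis is exactly $Z^{GSW}_{\beta,\Lambda_j}$. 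Hence $Z_\beta^{Vil} = \cons\, Z^{GSW}_{\beta,\Lambda_j} Z^{Coul}_{\beta,\Lambda_j}$, which is the claim, with $\cons$ the same graph-dependent (and boundary-condition-dependent) constant furnished by Claim \ref{c.Lebesgue_1_2}. The zero-boundary case is identical, replacing $\d^*,\Delta$ by $\mathring\d^*,\mathring\Delta$ and $\Omega$ by $\mathring\Omega$ throughout, since Claim \ref{c.Lebesgue_1_2} was stated for both boundary conditions.

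Alternatively — and this is perhaps cleaner to write — one can simply start from the definition $Z_\beta^{Vil} = \sum_m \int_{[-\pi,\pi)^{C^1}} e^{-\frac\beta2\langle \d\theta+2\pi m,\d\theta+2\pi m\rangle}\,d\theta$, substitute the orthogonal-decomposition identity $\langle \d\theta+2\pi m,\d\theta+2\pi m\rangle = \langle\varrho,\varrho\rangle + (2\pi)^2\langle q,(-\Delta)^{-1}q\rangle$ with $q=\d m$ established in the proof of Proposition \ref{p.decouplingF}, apply the bijection of Proposition \ref{p.bijection} to rewrite $\sum_m$ as $\sum_{\d q=0}\sum_{[\psi]}$, and then invoke Claim \ref{c.Lebesgue_1_2} to collapse the $[\psi]$-sum and the $\theta$-integral into $\cons\int_{\Omega^{1\shortrightarrow2}} e^{-\frac\beta2\langle\varrho,\varrho\rangle}\,d\calL_{1\shortrightarrow2}$. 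I do not expect a genuine obstacle here: all the analytic work (finiteness, shift-invariance, $q$-independence of the measure $\mu$) has already been carried out in Claim \ref{c.Lebesgue_1_2}, and the only thing to be careful about is bookkeeping — making sure the factor $\cons$ is the \emph{same} constant as the one in the decoupling proof and does not secretly depend on $\beta$, which is exactly what Claim \ref{c.Lebesgue_1_2} guarantees.
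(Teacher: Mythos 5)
Your proposal is correct and follows exactly the paper's own argument: the paper likewise takes $W_1=W_2=1$ in the identity established in the proof of Proposition \ref{p.decouplingF} and invokes Claim \ref{c.Lebesgue_1_2} to identify the resulting factors with $\cons$, $Z^{GSW}_{\beta,\Lambda_j}$ and $Z^{Coul}_{\beta,\Lambda_j}$. Your alternative second derivation is just the same computation unrolled from the definition of $Z^{Vil}_{\beta,\Lambda_j}$, so there is nothing substantively different to compare.
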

\begin{proof}
	Note that the proof of Proposition \ref{p.decouplingF} implies that for any $W_1:\Omega^{2}\to \R$ and  $W_2:\Omega^3\to \R$
	\begin{align*}
&\E_\beta^{Vil}\left[ W_1(\varrho)W_2(q)\right]\\
&\hspace{0.1\textwidth}=\frac{1}{Z_{\beta}^{Vil}}\sum_{ \d\qq=0} \left (W_2(\qq)e^{-\frac{\beta(2\pi)^2}{2}\langle \qq, (-\Delta)^{-1}\qq\rangle }  \sum_{[\psi]} \int_{[-\pi,\pi)^{C^1}}W_1(\varrho) e^{-\frac{\beta}{2}\langle \varrho, \varrho \rangle } d\theta\right).
\end{align*}
The result follows from taking $W_1=1$ and $W_2=1$ and using Claim \ref{c.Lebesgue_1_2}.
\end{proof}

\subsection{McBryan-Spencer spin-wave bound.}
By combining Proposition \ref{p.energy_wilson} on the energy of a Wilson loop with the decoupling Proposition \ref{p.decouplingF}, we obtain the following analog of  McBryan-Spencer's upper bound \cite{McBryanSpencer} in the present context of $4D$ $U(1)$-lattice gauge theory. 
\begin{corollary}\label{c.Mc}
	In the special setting of the $U(1)$-lattice gauge theory with Villain interaction, we recover the general result from \cite{SimonYaffe} which states that for any gauge group, 
	\begin{align*}\label{}
	|\EFK{\beta}{}{W_\gamma}| \leq \exp(-c(\beta) |\gamma|).
	\end{align*}
	Furthermore, we obtain that $c(\beta)\geq C_{GFF}/(2\beta)$ (where $C_{GFF}$ was defined in~\eqref{e.cGFF}). This corresponds in this $4D$  $U(1)$-lattice gauge theory setting to the McBryan-Spencer bound \cite{McBryanSpencer}. 
\end{corollary}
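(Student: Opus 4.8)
The plan is to combine the decoupling statement of Proposition~\ref{p.decouplingF} with the Fourier identity of Corollary~\ref{c.correlation} and the Gaussian computation of Proposition~\ref{p.energy_wilson}. First I would note that the Wilson loop observable for a rectangular loop $\gamma=\p R$ can be written $W_\gamma=e^{i\langle\theta,\1_\gamma\rangle}=e^{i\langle\d\theta,\1_R\rangle}$, since $\langle\theta,\1_\gamma\rangle=\langle\theta,\d^*\1_R\rangle=\langle\d\theta,\1_R\rangle$ (this uses that $\gamma$ is a closed loop bounding $R$, so $\d^*\1_R=\1_\gamma$ up to sign/orientation bookkeeping). Here $\1_R$ is an integer-valued $2$-form, so we are exactly in the setting of \eqref{e.Fourier_decoupling_integer_f}: we get
\begin{align*}
\EFK{\beta}{Vil}{W_\gamma}=\E_\beta^{GSW}\bigl[e^{i\langle\varrho,\1_R\rangle}\bigr]\cdot\E_\beta^{Coul}\bigl[e^{i2\pi\langle\d^*\Delta^{-1}q,\1_R\rangle}\bigr].
\end{align*}

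Next I would bound each factor in modulus by $\leq$ its value, which is trivially $\leq 1$ for the Coulomb factor since it is an expectation of a unit-modulus complex random variable. That immediately yields
\begin{align*}
\bigl|\EFK{\beta}{Vil}{W_\gamma}\bigr|\leq\bigl|\E_\beta^{GSW}\bigl[e^{i\langle\varrho,\1_R\rangle}\bigr]\bigr|.
\end{align*}
Then I would invoke Proposition~\ref{p.energy_wilson}: since $\langle\varrho,\1_R\rangle$ is centred Gaussian with variance $\|\proj{1}{2}\1_R\|^2$, its characteristic function at $1$ equals $\exp(-\tfrac12\|\proj{1}{2}\1_R\|^2)$, and Proposition~\ref{p.energy_wilson} evaluates this to $\exp\bigl(-\tfrac{C_{GFF}}{\beta}(L+H+O(L^{2/3}))\bigr)=\exp\bigl(-\tfrac{C_{GFF}}{2\beta}(|\gamma|+o(|\gamma|))\bigr)$, using $|\gamma|=2(L+H)$ and the shape constraint $L^{3/4}\le H\le L^{4/3}$ so the error is genuinely $o(|\gamma|)$. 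This gives the upper bound $|\EFK{\beta}{}{W_\gamma}|\leq\exp(-c(\beta)|\gamma|)$ with the explicit constant $c(\beta)\geq C_{GFF}/(2\beta)$; comparing with the general result of \cite{SimonYaffe} (which applies to any gauge group and is not specific to Villain) identifies this as the analog of the McBryan--Spencer bound.

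I do not expect a serious obstacle here, as the corollary is essentially a repackaging of machinery already assembled: the only points requiring a line of care are (i) checking that $\d^*\1_R=\1_\gamma$ with the correct orientation conventions so that the Wilson loop really is $e^{i\langle\d\theta,\1_R\rangle}$, and (ii) confirming the shape hypothesis on $R$ in Proposition~\ref{p.energy_wilson} is exactly what "sufficiently rectangular" buys us, so that $O(L^{2/3})=o(L+H)=o(|\gamma|)$. The mild subtlety — really the only thing worth a sentence — is that one must apply Proposition~\ref{p.energy_wilson} on whichever graph ($\Z^4$, the finite cube with free or zero boundary) the Villain model is defined on, but that proposition is stated uniformly over all these cases provided $R$ is far enough from $\p\Lambda$, which is part of the hypothesis. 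Everything else is immediate from $|e^{it}|\le 1$ and the Gaussian characteristic function formula.
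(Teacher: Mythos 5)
Your proposal is correct and follows exactly the route the paper intends: the corollary is stated as an immediate consequence of combining the decoupling (Proposition \ref{p.decouplingF}, via Corollary \ref{c.correlation}) with the energy computation of Proposition \ref{p.energy_wilson}, bounding the Coulomb factor trivially by $1$ in modulus and reading off the Gaussian characteristic function. The two points you flag for care (orientation in $\d^*\1_R=\1_\gamma$ and the shape hypothesis making the error $o(|\gamma|)$) are indeed the only bookkeeping required.
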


\subsection{Quark trapping in $3d$ $U(1)$ gauge theory.} 
The energy of a Wilson loop may also be easily analyzed in $d=3$. In this case, Proposition \ref{p.energy_wilson} translates easily as follows: if $\gamma$ is a rectangular loop in $\Z^3$ of side-lengths $H \leq L$ which is sufficiently far from $\p \Lambda$, then there exists a constant $C>0$ such that its energy $\| \proj{1}{2} \1_R \|^2$ satsifies 
\begin{align*}\label{}
\| \proj{1}{2} \1_R \|^2 \geq C L \log H\,.
\end{align*}
This in turn implies the following result (for the Villain interaction) which is due to Glimm-Jaffe \cite{glimm1977quark}.

\begin{corollary}[\cite{glimm1977quark}]\label{c.jaffe}
Consider any infinite-volume limit in $\Z^3$ of Villain $U(1)$-lattice gauge theory equipped with either free or Dirichlet boundary conditions.  
There exists $C>0$ such that for any $\beta>0$ and any rectangular loop $\gamma$ with 
side-lengths $H \leq L$,
\begin{align*}\label{}
|\EFK{\beta}{}{W_\gamma}| \leq \exp\left (-\frac{C}{2\beta} L \log H\right ).
\end{align*}
\end{corollary}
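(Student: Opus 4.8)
The plan is to combine the spin-wave decoupling (Proposition \ref{p.decouplingF}) with the $d=3$ version of the energy estimate, exactly as the paragraph preceding the statement suggests. First I would invoke Corollary \ref{c.correlation}: applying it to the integer-valued $2$-form $f = \1_R$ (where $R$ is the rectangle with $\p R = \gamma$, sitting inside a two-dimensional coordinate hyperplane of $\Z^3$) gives the factorization
\begin{align*}
\EFK{\beta}{}{W_\gamma} = \E_\beta^{Vil}\left[e^{i\langle \d\theta, \1_R\rangle}\right] = \E_\beta^{GSW}\left[e^{i\langle \varrho, \1_R\rangle}\right]\, \E_\beta^{Coul}\left[e^{i2\pi\langle \d^*\Delta^{-1}q, \1_R\rangle}\right].
\end{align*}
Since the Coulomb-gas factor is the expectation of a unit-modulus complex random variable, it has modulus at most $1$, so $|\EFK{\beta}{}{W_\gamma}| \leq |\E_\beta^{GSW}[e^{i\langle \varrho, \1_R\rangle}]|$; the topological defects can only help, and we discard them.

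Next I would analyze the Gaussian factor. The variance of the centred Gaussian $\langle\varrho,\1_R\rangle$ is $\|\proj{1}{2}\1_R\|^2 = -\langle\Delta^{-1}\1_\gamma,\1_\gamma\rangle$, so $\E_\beta^{GSW}[e^{i\langle\varrho,\1_R\rangle}] = \exp(-\tfrac{1}{2\beta}\|\proj{1}{2}\1_R\|^2)$ (the $1/\beta$ coming from the covariance $\tfrac1\beta(-\Delta)^{-1}$ of the $\beta$-GFF on $1$-forms). It therefore remains to show the $d=3$ analog of Proposition \ref{p.energy_wilson}, namely $\|\proj{1}{2}\1_R\|^2 \geq C\, L\log H$ for a rectangular loop of side-lengths $H\le L$ far from $\p\Lambda$. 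I would run the same argument as in the proof of Proposition \ref{p.energy_wilson}: split $\gamma$ into horizontal and vertical parts, use Corollary \ref{c.Green_1} so that only same-direction edges contribute, and reduce to summing $(-\Delta)^{-1}(e,e')$ over pairs of parallel edges on opposite sides of the rectangle. The key difference is the Green's function asymptotics: in $\Z^3$ the $1$-form Green's function behaves like $G_{\G_i}(e,e')\asymp \|e-e'\|^{-1}$ (Proposition \ref{p.Green_vertices} with $n=3$). Summing this over a horizontal side of length $L$ against the matching side at vertical distance $H$ gives roughly $L\sum_{k\ge H}k^{-1}\cdot(\text{something})$ — more carefully, the diagonal-ish contribution from an edge $e$ on one long side to the whole opposite long side is of order $\sum_{k}(k+H)^{-1}\cdot$ over $\asymp L$ choices, but the genuinely relevant lower bound comes from the fact that in $d=3$ a simple random walk started near one side of the rectangle visits a parallel line at distance $H$ an expected $\asymp \log(L/H)$ — actually $\asymp\log H$ after summing over the length — number of times, which when summed over the $\asymp L$ starting edges yields $\gtrsim L\log H$. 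I would organize this via the random-walk representation of $G_{\G_i}$ and the comparison Lemmas \ref{l.approximation} and \ref{l.approximation2} to pass from $\Z^3$ to the finite or Dirichlet graph with an $O(L/\text{dist}^{2})$-type error that is negligible since $\dist(R,\p\Lambda)$ is large.

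Putting these together: $|\EFK{\beta}{}{W_\gamma}| \leq \exp(-\tfrac{1}{2\beta}\|\proj{1}{2}\1_R\|^2) \leq \exp(-\tfrac{C}{2\beta}L\log H)$, which is the claim; the constant $C$ depends only on the dimension and is uniform in $\beta>0$ (this uniformity is exactly why topological defects are irrelevant here — we only used $|\cdot|\le 1$ for the Coulomb factor). The main obstacle I anticipate is the $d=3$ energy lower bound $\|\proj{1}{2}\1_R\|^2 \gtrsim L\log H$: one must be careful about which pairs of edges genuinely contribute the logarithm (corners must be excised, as in the $d=4$ proof where points within $L^{2/3}$ of corners are excluded), about uniformity of the Green's function estimates as the aspect ratio varies, and about the boundary-comparison error terms. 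The decoupling and the discarding of the Coulomb factor are immediate given Corollary \ref{c.correlation}; essentially all the work is in the $3d$ Green's function computation, which is a routine but slightly delicate adaptation of Proposition \ref{p.energy_wilson}.
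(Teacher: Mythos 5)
Your proposal is correct and follows exactly the paper's route: the paper likewise obtains this corollary by combining the decoupling of Proposition \ref{p.decouplingF} (discarding the unit-modulus Coulomb factor, as in Corollary \ref{c.Mc}) with the $d=3$ analogue of Proposition \ref{p.energy_wilson}, namely $\| \proj{1}{2} \1_R \|^2 \geq C L \log H$. Your sketch of that $3d$ energy bound --- same-direction edges only, $\|e-e'\|^{-1}$ decay of the $1$-form Green's function, and the near-side minus far-side cancellation producing the $\log H$ per edge of the long sides --- is the computation the paper leaves implicit, and your Gaussian factor $\exp(-\tfrac{1}{2\beta}\|\proj{1}{2}\1_R\|^2)$ is the correct normalization consistent with Definition \ref{d.SGFF}.
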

As observed in \cite{glimm1977quark}, this result is physically very much relevant as it indicates a phenomenon of {\em quark trapping} at all inverse temperatures $\beta>0$ for $U(1)$-lattice gauge theory. This was latter greatly improved (still in the case of $U(1)$-lattice gauge theory) by Göpfert-Mack in \cite{GM82}.

\subsection{Computation of Wilson's observables in the non-degenerate regime.}
Our next corollary is the following analog of the main statements proved in the series of works \cite{Sourav,malin,cao} for discrete gauge groups. It is a direct consequence of the main estimates from \cite{FSrestoration} combined with Proposition \ref{p.decouplingF}. 

\begin{corollary}\label{c.sourav}
As $\beta \to \infty$, one can compute exactly (i.e. up to $o(1)$-correction) the Wilson loop observables of large and sufficiently rectangular loops $\gamma$ which are sufficiently far from the boundary (in the sense of Theorem \ref{th.main_finite} below) and whose perimeter $|\gamma|$ satisfies as $\beta \to \infty$, $|\gamma|\asymp \beta$. Wilson loop observables for such loops $\gamma$ satisfy as $\beta\to \infty$  
\begin{align*}\label{}
\EFK{\beta}{}{W_\gamma} = e^{- \frac{C_{GFF}}{2\beta}  |\gamma| }+o_{\avelio{\beta}}(1)\,,
\end{align*}
where we recall that $C_{GFF}$ was defined in~\eqref{e.cGFF}. This result also holds for the case of infinite volume (subsequential)-limits on $\Z^4$. 
\end{corollary}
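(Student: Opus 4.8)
The plan is to combine the decoupling statement (Proposition \ref{p.decouplingF}) with the energy computation (Proposition \ref{p.energy_wilson}) and a soft control on the Coulomb gas contribution, which at the relevant scale $|\gamma| \asymp \beta$ is negligible as $\beta \to \infty$. First, using $\p R = \gamma$ and the identity $\langle \d\theta, \1_R\rangle = \langle \theta, \d^* \1_R\rangle = \langle \theta, \1_\gamma\rangle$, together with the fact that $W_\gamma = e^{i\langle\theta,\1_\gamma\rangle} = e^{i\langle \d\theta,\1_R\rangle}$ and that $\1_R$ is integer-valued, I would invoke \eqref{e.Fourier_decoupling_integer_f} from Corollary \ref{c.correlation}:
\begin{align*}
\EFK{\beta}{}{W_\gamma} = \E_\beta^{GSW}\bigl[e^{i\langle \varrho, \1_R\rangle}\bigr]\, \E_\beta^{Coul}\bigl[e^{i 2\pi \langle \d^*\Delta^{-1} q, \1_R\rangle}\bigr]\,.
\end{align*}
By Proposition \ref{p.energy_wilson}, since $|\gamma| \asymp \beta$ forces $L, H$ to be of order $\beta$ (and one checks the loop can be taken sufficiently rectangular so that $L^{3/4} \le H \le L^{4/3}$), the first factor equals $\exp(-\tfrac{C_{GFF}}{2\beta}(|\gamma| + o(|\gamma|))) = \exp(-\tfrac{C_{GFF}}{2\beta}|\gamma|) + o_\beta(1)$, because the exponent is $O(1)$ and the error term $\tfrac{C_{GFF}}{\beta}O(L^{2/3}) = O(\beta^{-1/3}) \to 0$.

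Second, I would show that the Coulomb gas factor tends to $1$ as $\beta \to \infty$. This is exactly where the estimates of \cite{FSrestoration} enter: at large $\beta$ the Coulomb gas on $3$-forms is in a dilute (high-activity suppression) regime, and the relevant functional $2\pi\langle \d^*\Delta^{-1} q, \1_R\rangle = 2\pi \langle q, \Delta^{-1}\d \1_R\rangle = 2\pi\langle q, \Delta^{-1}\1_\gamma^{(2)}\rangle$ (appropriately interpreted) is tested against a $3$-form of bounded $\ell^2$-norm $O(\sqrt{|\gamma|}) = O(\sqrt\beta)$. A Peierls/cluster-expansion style bound from \cite{FSrestoration} gives that $q \equiv 0$ with probability $1 - o_\beta(1)$ on any fixed bounded region, and more quantitatively that $\E_\beta^{Coul}[e^{i 2\pi \langle \d^*\Delta^{-1}q,\1_R\rangle}] = 1 + o_\beta(1)$ uniformly over loops with $|\gamma| \asymp \beta$ — the point being that the typical charge activity decays like $e^{-c\beta}$ while the test function grows only polynomially in $\beta$. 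Combining the two factors yields $\EFK{\beta}{}{W_\gamma} = e^{-\tfrac{C_{GFF}}{2\beta}|\gamma|} + o_\beta(1)$.

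Finally, for the infinite-volume (subsequential) limits on $\Z^4$, I would note that all the above estimates are uniform in $\Lambda$ provided $\gamma$ is at distance $\gtrsim L$ from $\p\Lambda$ (using Lemma \ref{l.approximation2} to control the $1$-form Green's function, and the analogous approximation for the Coulomb gas), so the identity passes to the limit. The main obstacle I expect is the second step: extracting from \cite{FSrestoration} a bound on the Coulomb gas characteristic function that is simultaneously (i) quantitative enough to beat the $O(\sqrt\beta)$ growth of the test function and (ii) uniform over the relevant family of loops and over the boundary conditions/volumes. Everything else — the algebraic decoupling and the Green's function asymptotics — is already in hand from the earlier sections.
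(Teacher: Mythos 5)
Your first step (the decoupling identity from Corollary \ref{c.correlation} plus the exact evaluation of the spin-wave factor via Proposition \ref{p.energy_wilson}, noting that for $|\gamma|\asymp\beta$ the exponent is $O(1)$ and the error $O(L^{2/3}/\beta)$ vanishes) is correct and matches what the paper uses for the upper bound. The genuine gap is your second step. The claim that $\E_\beta^{Coul}\bigl[e^{i2\pi\langle \d^*\Delta^{-1}q,\1_R\rangle}\bigr]=1+o_\beta(1)$ is \emph{not} a soft consequence of a Peierls-type bound: the test function $\d^*\Delta^{-1}\1_R$ is long-range (supported on essentially all $2$-cells of $\Lambda$, with total energy $\langle\proj{3}{2}\1_R,\proj{3}{2}\1_R\rangle\asymp|\gamma|\asymp\beta$, as Proposition \ref{p.spread_energy} shows the complementary projection already carries energy of that order), the loop grows with $\beta$, and one must rule out the cumulative effect of arbitrarily many small charges spread over a region growing with $\beta$ --- this is precisely the content of the Fr\"ohlich--Spencer multiscale analysis, not something one can extract from it as a stated black-box bound on the Coulomb characteristic function. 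You correctly identify this as the obstacle, but as written the step is not justified.

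The paper sidesteps this entirely with a sandwich argument that never touches the Coulomb factor directly: (i) the lower bound \eqref{e.WL} of \cite{FSrestoration} on $|\EFK{\beta}{}{W_\gamma}|$, used as a black box, gives $|\EFK{\beta}{}{W_\gamma}|\geq \exp\bigl(-\frac{C_{GFF}}{2\beta}(1+\eps(\beta))(|\gamma|+o(|\gamma|))\bigr)$ with $\eps(\beta)\to0$, which in the regime $|\gamma|\asymp\beta$ matches $e^{-\frac{C_{GFF}}{2\beta}|\gamma|}$ up to $o_\beta(1)$; (ii) Ginibre's inequality gives $\EFK{\beta}{}{W_\gamma}\geq 0$, removing the absolute value (and the phase ambiguity your route would also have to address); (iii) the matching upper bound is exactly your first step, i.e.\ Corollary \ref{c.Mc}: the Coulomb factor has modulus at most $1$, so the spin-wave factor alone bounds $|\EFK{\beta}{}{W_\gamma}|$ from above. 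If you want to keep your structure, the honest fix is to \emph{deduce} the asymptotics of the Coulomb factor from \eqref{e.WL} and the decoupling (since the spin-wave factor is bounded away from $0$ in this regime), rather than to prove them independently.
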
 
As discussed in Subsection \ref{ss.links}, this Corollary is the analog of the results in \cite{Sourav,malin,cao} except loops $\gamma$ need to be much smaller: namely of size $\asymp \beta$ in our case when $G=U(1)$ as opposed to $|\gamma| \asymp e^{12\beta}$ in the case where $G=\Z_2$ (\cite{Sourav}).  Also when $G=U(1)$ such a result which is asymptotic in $\beta$ does not see the effect of vortices which would start being relevant for loops of size $\asymp \beta$ only at the next orders in $\beta$. 
Let us also highlight that the results in \cite{Sourav,malin,cao} are more quantitative in the geometry of the loop $\gamma$ (in particular in its number of corners). Here for simplicity, we sticked to the flat case with four corners.

\smallskip
\noindent
{\em Proof of Corollary \ref{c.sourav}.}
This is a direct consequence of the following facts.
\bnum
\item The estimate~\eqref{e.WL} which is one of the main estimate proved in \cite{FSrestoration}. 
\item Ginibre's inequalities \cite{Ginibre}, as used in \cite{FSrestoration} which imply that for all loops $\gamma$, $\EFK{\beta}{}{W_\gamma} \geq 0$
\item Proposition \ref{p.decouplingF} which, as in Corollary \ref{c.Mc} and McBryan-Spencer estimate provides the easier upper bound. 
\enum
\qed

\subsection{An efficient sampling algorithm for Coulomb gas in dimensions $n\geq 3$.}\label{ss.algo}

In \cite[Section 4]{GS2}, we introduced a sampling dynamics for the $2d$ Coulomb gas with local updates (despite the long range interactions in $\<{q,(-\Delta)^{-1} q}$). This algorithm used the decoupling of the Villain model into a GFF and Coulomb gas in $2d$. As explained in \cite[Section 4]{GS2}, an extension of this algorithm to higher dimensions $n\geq 3$ would require a suitable decoupling lemma for a Villain model defined on the $n-2$ cells. This is exactly what the analysis carried out in Section \ref{s.decoupling} allows us to do. We state it as a Corollary, again of Proposition \ref{p.decouplingF}.

\begin{corollary}
Fix  $n\geq 2$, $\beta>0$ and a box $\Lambda=\Lambda_j=[-j,j]^n \subset \Z^n$. The classical $n$-dimensional Coulomb gas\footnote{as opposed to the Coulomb gas in this paper which lives on the $3$-forms of $\Z^4$ and which is naturally associated to $U(1)$ gauge theory.} is the random $n$-form $q$ whose law is given by
\begin{align*}\label{}
\FK{\beta,\Lambda}{Coul}{q} \propto \exp(-\frac \beta 2 \<{q, (-\Delta)^{-1}q})\,,
\end{align*}
and where $\Lambda$ is equipped with either free on Dirichlet boundary conditions (see \cite{GS2} for the proper meaning of free boundary conditions when dealing with $n$-forms, which was never needed in this paper). 

One can implement a dynamics with local updates in order to sample this $n$-dimensional Coulomb gas. 
\end{corollary}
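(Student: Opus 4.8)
The plan is to realize the classical $n$-form Coulomb gas of the statement as the topological-defect component of an auxiliary Villain coupling placed on the $(n-2)$-forms of $\Lambda$ --- a model with a finite-range interaction --- and then to sample that auxiliary model by alternating single-cell Gibbs updates, reading off the desired sample as the local function $q:=\d m$ of the auxiliary configuration $(\theta,m)$. Concretely, set $\beta':=\beta(2\pi)^{-2}$ and, for $\Lambda=\Lambda_j=[-j,j]^n$, introduce the Villain $U(1)$-coupling on $(n-2)$-forms: the measure on pairs $(\theta,m)$ with $\theta\in\Omega^{n-2}(\Lambda)$ valued in $[-\pi,\pi)$ and $m\in\Omega^{n-1}_\Z(\Lambda)$ given by
\begin{align*}
\P^{\Vil}_{\beta'}(d\theta,m)\propto \exp\!\Big(-\tfrac{\beta'}{2}\,\langle\, \d\theta+2\pi m,\ \d\theta+2\pi m\,\rangle\Big)\,d\theta\,,
\end{align*}
that is, Definition \ref{d.Villain_final} verbatim with $1$-forms replaced by $(n-2)$-forms (boundary conditions being handled as in \cite{GS2}). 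Since $\langle\d\theta+2\pi m,\d\theta+2\pi m\rangle=\sum_{f\in C^{n-1}(\Lambda)}(\d\theta(f)+2\pi m(f))^2$ and $\d\theta(f)=\sum_{e\in\partial f}\theta(e)$ only involves the $2(n-1)$ cells $e\subset\partial f$, this interaction is indeed finite-range.

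Next I would verify that $q:=\d m$ has the correct marginal law. The proof of Proposition \ref{p.decouplingF} uses only the bijection of Proposition \ref{p.bijection} (applied here to integer $(n-1)$-forms), the projection identities of Lemma \ref{l.proj}, and the shift-invariance of Lebesgue measure on $\d(\Omega^{n-2}(\Lambda))$ established in Claim \ref{c.Lebesgue_1_2}; each of these carries over verbatim to degree $n-2$ on $\Z^n$, the only additional input being that $\d:\Omega^{n-1}(\Lambda)\to\Omega^{n}(\Lambda)$ is onto --- equivalently that $\d^*$ is injective on $\Omega^n$, which is recorded in Remark \ref{r.casek=0,n}. Running that argument unchanged yields the orthogonal decomposition $\d\theta+2\pi m=\varrho+2\pi\,\d^*\Delta^{-1}q$, with $\varrho$ an independent gradient spin-wave on the $(n-1)$-forms, and shows that $q$ has law $\propto\exp\!\big(-\tfrac{\beta'(2\pi)^2}{2}\langle q,(-\Delta)^{-1}q\rangle\big)$. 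As $\d q=0$ holds automatically for $n$-forms (there being no $(n+1)$-cells) and $\beta'(2\pi)^2=\beta$, this is precisely the Coulomb gas of the statement. Only the marginal of $q$ enters, so the fact --- stressed in Section \ref{s.decoupling} --- that for $n\geq3$ the decoupling must be phrased through $\d\theta$ rather than $\theta$ plays no role here.

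Finally I would sample $\P^{\Vil}_{\beta'}$ by the local dynamics of \cite[Section 4]{GS2}: alternate (i) single-cell Glauber updates of $\theta(e)$, with $e$ ranging over the $(n-2)$-cells, from the conditional density on $[-\pi,\pi)$ proportional to $\exp\!\big(-\tfrac{\beta'}{2}\sum_{f\ni e}(\d\theta(f)+2\pi m(f))^2\big)$, which depends only on the finitely many $(n-1)$-cells containing $e$; and (ii) for each $(n-1)$-cell $f$, an exact resampling of $m(f)$, which by the (degree-independent) Proposition \ref{pr.Villain} is an independent integer-valued Gaussian of parameters $\big(-\tfrac{1}{2\pi}\d\theta(f),(2\pi)^2\beta'\big)$. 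Each sweep is a Gibbs update for $\P^{\Vil}_{\beta'}$, hence reversible for it, and step (ii) makes the chain irreducible on the whole $(\theta,m)$-space; reading off $q=\d m$ --- an explicit local function of $m$ --- then gives a local-update sampler of the Coulomb gas, which for $n=2$ reproduces the algorithm of \cite[Section 4]{GS2}. I expect the only genuinely delicate point to be the verification that the decoupling of Section \ref{s.decoupling} is degree-agnostic --- in particular the surjectivity of $\d$ onto top forms and, in the Dirichlet case, the root-$n$-cell bookkeeping; no quantitative mixing-time bound is claimed here, "efficient" referring, as in \cite{GS2}, to the locality of the updates (no global inversion of $-\Delta$ is ever needed).
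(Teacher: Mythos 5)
Your proposal is correct and follows essentially the same route as the paper: realize the $n$-dimensional Coulomb gas as $q=\d m$ for an auxiliary Villain coupling on the $(n-2)$-forms of $\Lambda$, justify the law of $q$ by the degree-shifted version of the decoupling in Proposition \ref{p.decouplingF}, and sample the auxiliary model by local updates. The only (welcome) refinement is your explicit rescaling $\beta'=\beta(2\pi)^{-2}$, which makes the inverse temperature of the resulting Coulomb gas match the normalization in the statement exactly.
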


\noindent
{\em Proof.}
Following the algorithm outlined in \cite{GS2} in the $2d$ case, one can introduce the following sampling algorithm.
\bnum
\item Fix  $\Lambda\subset \Z^n$ a finite box with either free or Dirichlet boundary conditions.
\item The first main step is to sample a version of a Villain model $\theta$ which now lives on the $n-2$-forms of $\Lambda$. Exactly as in $2d$, it is defined using the following Gibbs measure
\begin{align*}\label{}
\FK{\beta}{}{\theta} \propto \prod_{c \in C^{n-1}(\Lambda) } \sum_m \exp(-\frac \beta 2 (\d \theta(c) + 2\pi m)^2)\,, 
\end{align*}
where recall from Subsection \ref{ss.reminder} that $C^{n-1}(\Lambda)\subset \overrightarrow C^{n-1}(\Lambda)$ is the set of oriented $n-1$ cells of $\Lambda$. 
 
 As in $2d$, one can run  a local MCMC chain in order to sample this Villain model at inverse temperature $\beta$.  This should require at most $O(|\Lambda|^{\eta(\beta)})$ steps. (See \cite{GS2} for a discussion). 

\item Once the Villain configuration has reached equilibrium, sample the random $n-1$-form $m$ conditionally independently on each positively oriented $n-1$ cell of $\Lambda$ as follows
\begin{align*}
		\P^{\beta}_{Vil}(dm \mid \theta )
		& \propto \prod_{c \in C^{m-1}(\Lambda)} \exp\left(-\frac{(2\pi)^2\beta}{2}\Big(\frac{\d \theta(c)}{2\pi} +  m(c)\Big)^2 \right) \delta_\Z(dm)\,. 
		\end{align*}
		
\item Finally obtain the $n$-dimensional Coulomb gas using $q=\d m$.
\enum 

The reason why this algorithm produces the desired law is due to a straightforward extension of the decoupling Proposition \ref{p.decouplingF} to the setting of a Villain model on the $n-2$ forms. {\em (N.B. This coincide with $U(1)$ lattice gauge theory only when $n=3$)}.  This was not available in \cite{GS2} as arguments such as Claim \ref{c.Lebesgue_1_2} were not needed for the decoupling result in $2d$. \qed

\section{Proof of the spin-wave improvement}\label{s.proof}
\subsection{Bound on the Fourier transform of $m$.}
Now that we introduced the correct framework, this subsection will follow the same analysis as in \cite{GS2}. The key step to obtain bounds on the Fourier transform of a Coulomb gas is the following lemma. The proof closely resembles that of Lemma 7.1 of \cite{GS2}.
\begin{lemma}\label{l.Fourier_m}
	Take $(\theta,m)$ a Villain $U(1)$ lattice gauge theory in $\Lambda$ at inverse temperature $\beta$ with either boundary condition and a $2$-form $h\in \Omega^2(\Lambda)$. For $b>0$ define 
\begin{equation}
h^{b}(w)= h(w) \1_{|h|<b}
\end{equation}
	

For all $\beta>0$ there exists $0<\tilde b(\beta)<1$ (which can be chosen to be increasing in $\beta$) such that for all $b<\tilde  b(\beta)$
\begin{equation*}
\E^{Vil}_{\beta}\left[e^{i\langle m,h \rangle} \right]\leq e^{-\frac{(1-bK_\beta)}{2} \inf_a \var^{IG}(a,(2\pi)^2\beta) \langle h^b, h^b \rangle}.
\end{equation*}
(Recall the definition of the constant $K_\beta$ in ~\eqref{e.K_beta}). 
\end{lemma}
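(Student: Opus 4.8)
The strategy is to condition on $\theta$, use the conditional independence of $(m(f))_{f \in C^2}$ from Proposition~\ref{pr.Villain}, and reduce the whole estimate to a single-variable Fourier bound for an integer-valued Gaussian random variable. Writing $\langle m, h\rangle = \sum_{f \in C^2} m(f) h(f)$, and using that conditionally on $\theta$ the $m(f)$ are independent IV-Gaussians at inverse temperature $(2\pi)^2\beta$ centred at $a_f := -(2\pi)^{-1}\d\theta(f)$, we get
\begin{align*}
\E^{Vil}_\beta\left[e^{i\langle m, h\rangle}\right] = \E^{Vil}_\beta\left[\prod_{f\in C^2} \E^{IG}_{(2\pi)^2\beta, a_f}\left[e^{i m(f) h(f)}\right]\right].
\end{align*}
So the crux is a pointwise bound on the characteristic function $\varphi_{a,\beta'}(t) := \E^{IG}_{\beta',a}[e^{itX}]$ of a single IV-Gaussian: I would show that there is $\tilde b(\beta)$ such that for $|t| < b < \tilde b(\beta)$,
\begin{align*}
\left|\varphi_{(2\pi)^2\beta, a}(t)\right| \le \exp\left(-\tfrac{1}{2}(1-bK_\beta)\,\var^{IG}(a,(2\pi)^2\beta)\, t^2\right),
\end{align*}
uniformly in $a$. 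Taking the product over $f$ (only the $f$ with $|h(f)| < b$ contribute a nontrivial factor to this bound, the others being bounded by $1$ in modulus) and then pulling the resulting deterministic bound out of the expectation over $\theta$ gives exactly $\exp\left(-\tfrac{1}{2}(1-bK_\beta)\inf_a \var^{IG}(a,(2\pi)^2\beta)\langle h^b, h^b\rangle\right)$, since $\sum_{f} t_f^2 \1_{|h(f)|<b} = \langle h^b, h^b\rangle$ with $t_f = h(f)$.

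The single-variable estimate is where the real work is, and it is the main obstacle. The natural route is a third-order Taylor expansion of $\log|\varphi_{a,\beta'}(t)|$ (equivalently $\Re \log \varphi$) around $t=0$: the zeroth order term vanishes, the first order term is $i t\,\mu^{IG}(a,\beta')$ which is purely imaginary and so disappears upon taking the modulus, the second-order term contributes $-\tfrac{1}{2}\var^{IG}(a,\beta')\,t^2$, and the remainder is controlled by the third absolute moment $T^{IG}(a,\beta')$ via the standard characteristic-function remainder estimate $|\log\varphi(t) + \tfrac12 \sigma^2 t^2 - i\mu t| \le \tfrac{|t|^3}{6} T$ (one must be a little careful that $\log\varphi$ is well-defined, i.e. $\varphi$ does not vanish, for $|t|$ small — this is why a threshold $\tilde b(\beta)$ is needed, and it can be taken increasing in $\beta$ because the relevant quantities are monotone). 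Then one writes $T^{IG}/\var^{IG} \le K_\beta$ using~\eqref{e.K_beta}, so that the third-order correction is bounded by $\tfrac{|t|}{3}K_\beta \cdot \tfrac12 \var^{IG} t^2 \le \tfrac{b K_\beta}{2}\var^{IG} t^2$ for $|t|<b$, absorbing into the $(1-bK_\beta)$ factor. The fact that $K_\beta < \infty$ (Proposition~\ref{pr.Monotonicity_IG}(ii)) is what makes this uniform in $a$.

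Two technical points deserve care in writing this up. First, one should make sure the bound degrades gracefully: we only claim it for $b$ below a $\beta$-dependent threshold, and the threshold must be chosen so that both $1 - bK_\beta > 0$ and $\varphi_{a,\beta'}$ stays away from zero on $[-b,b]$; since everything is monotone in $\beta$ (via $K_\beta$ being a supremum over $\hat\beta > \beta$), $\tilde b(\beta)$ can be chosen increasing. Second, the passage from the per-coordinate bound to the product requires only that $|\varphi| \le 1$ always (trivially true) so that coordinates with $|h(f)| \ge b$ are harmlessly dropped; no independence beyond the conditional independence already granted is needed, and the final expectation over $\theta$ of a constant is that constant. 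This mirrors Lemma 7.1 of~\cite{GS2} almost verbatim, the only genuine difference being bookkeeping: here the sum runs over $2$-cells $f \in C^2(\Lambda)$ rather than over the cells relevant in the $2d$ setting, and $h$ is a $2$-form, but the argument is insensitive to this.
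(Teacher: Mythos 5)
Your proposal is correct and follows essentially the same route as the paper: condition on $\theta$, use the conditional independence and IV-Gaussian law of the $m(f)$ from Proposition~\ref{pr.Villain}, factor out the purely imaginary mean term, bound each single-site characteristic function by a second-order term plus a third-moment remainder controlled via $K_\beta$, and drop the sites with $|h(f)|\geq b$ using $|\varphi|\leq 1$. The only cosmetic difference is that the paper Taylor-expands $\cos$ and $\sin$ and then applies $\log(1+x)\leq x$, rather than expanding $\log\varphi$ directly, which sidesteps the (minor) issue you flag about $\log\varphi$ being well-defined.
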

\begin{proof}
	We start by separating the characteristic function conditioning on $\theta$
	\begin{align*}
	\E^{Vil}_{\beta}\left[ e^{i\langle m,h\rangle }\right]&=\E^{Vil}_{\beta} \left[ e^{i\langle \E\left[ m\mid \theta \right],h\rangle }\E\left[e^{i\langle m-\E\left[m\mid \theta \right] ,h\rangle } \mid \theta \right] \right]\\
	&=\E^{Vil}_{\beta} \left[ e^{i\langle \E\left[ m\mid \theta \right],h\rangle }\prod_{e\in C^1}\E\left[e^{i (m(e)-\E\left[m(e)\mid \theta \right]) h(e)} \mid \theta \right] \right]
	\end{align*}
	
	We now use that for all $x\in \R$
	\begin{align*}
	&|\sin(x)-x|\leq \frac{|x|^3}{3!}\\
	& \left |\cos(x)-1+\frac{x^2}{2}\right |\leq \frac{|x|^3}{3!},
	\end{align*}
	to see that $|\E_{\beta}^{Vil}\left[e^{i\langle m, h\rangle} \right]|$ is smaller than or equal to (using the notations from Subsection \ref{ss.IVG})
	\begin{align*}
	&  \E_{\beta}^{Vil}\left[ \prod_{e\in C^1}\left |\E\left[e^{i (m(e)-\E\left[m(e)\mid \theta \right]) h^b(e)} \mid \theta \right]\right|  \right]\\
	&\leq \E_{\beta}^{Vil}\left[\prod_{e\in C^1}\left( 1-\frac{1}{2}(h^b(e))^2\var^{IG}(\d\theta(e),(2\pi)^2\beta) +\frac{|h^b(e)|^3}{3}T^{IG}(\d\theta(e),(2\pi)^2\beta)\right )  \right] 
	\end{align*}
as long as for every edge $e\in C^1$, 
	\begin{equation}
	\frac{1}{2}(h^b(e))^2\var^{IG}(\d\theta(e),(2\pi)^2\beta) \leq 1.
	\end{equation}
	Let us note that this holds as long as $b< \tilde b(\beta)$, where $\beta \mapsto  \tilde b(\beta)$  is the following increasing function of $\beta$
	\begin{align*}
	\tilde b (\beta) :=\min \left \{ \inf_{a,\hat \beta \geq \beta }\frac{1}{\sqrt{\var^{IG}(a,(2\pi)^2\hat \beta)}},  K_{\beta}^{-1} \right \},
	\end{align*}
	
	We now use that $\log(1+x)\leq x$ as long as $x>-1$. This implies that $|\E^{Vil}_{\beta}\left[e^{i\langle m, h\rangle} \right]|$ is upper bounded by
	\begin{align*}
	&\E^{Vil}_\beta\left[\prod_{e\in E}e^{ -\frac{1}{2}(h^b(e))^2\var^{IG}(\d\theta(e),(2\pi)^2\beta^{-1})+\frac{|h^b(e)|^3}{3!}T^{IG}(\d\theta(e),(2\pi)^2\beta^{-1})}  \right]\\
	&\hspace{0.3\textwidth}\leq e^{ -\min_a \frac{1}{2}\var^{IG}(a,(2\pi)^2\beta^{-1})\langle h^b,h^b \rangle +\frac{b}{3!} T^{IG}(a,(2\pi)^2\beta^{-1})\langle h^b,h^b\rangle}.
	\end{align*}

\end{proof}

\subsection{Proof of the main theorem.}
We will now prove our improved spin-wave estimate for the Wilson loop observable.  The theorem stated below is a more precise version of Theorem \ref{th.main}. 

\begin{theorem}\label{th.main_finite}
Let $\Lambda\subset \Z^4$ be the infinite lattice or a finite cube,  equipped either with free or Dirichlet boundary conditions. Assume that $R$ is a rectangle in $\Lambda$ with height $H$ and length $L$ so that $ L^{3/4} \leq H \leq L^{4/3}$. In case where $\Lambda$ is finite we assume furthermore that the distance between $R$ and $\p \Lambda$ is bigger than $L$. Then, there exists a constant $K$ s.t. for all $\beta\geq 1$, 
	\begin{align*}
	\E^{\Vil}_\beta\left[e^{\langle \d \theta, \1_{R} \rangle} \right]&\leq  e^{-\frac{1+ K\, M(\beta)}{2\beta} \langle \proj{1}{2} \1_R, \proj{1}{2} \1_R \rangle}\\
	&\leq  e^{- \frac{1+ K\, M(\beta)}{\beta}\, C_{GFF}  (L+H+O(L^{2/3}))}\,.
	\end{align*}
\end{theorem}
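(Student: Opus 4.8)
The plan is to start from the Fourier-decoupling identity of Corollary \ref{c.correlation}, which gives, for the integer-valued $2$-form $f = \1_R$,
\[
\E^{\Vil}_\beta\left[e^{i\langle \d\theta, \1_R\rangle}\right] = \E^{\GSW}_\beta\left[e^{i\langle \varrho, \1_R\rangle}\right]\, \E^{\Vil}_\beta\left[e^{i2\pi\langle m, \proj{3}{2}\1_R\rangle}\right].
\]
The first factor is exactly the Gaussian spin-wave contribution, computed in Proposition \ref{p.energy_wilson}: it equals $\exp\bigl(-\tfrac{1}{2\beta}\langle \proj{1}{2}\1_R,\proj{1}{2}\1_R\rangle\bigr) = \exp\bigl(-\tfrac{C_{GFF}}{\beta}(L+H+O(L^{2/3}))\bigr)$. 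So the whole game is to bound the second factor — the Fourier transform of the Coulomb-gas/vortex part tested against $h := 2\pi\,\proj{3}{2}\1_R$ — by $\exp\bigl(-\tfrac{K\,M(\beta)}{2\beta}\langle \proj{1}{2}\1_R,\proj{1}{2}\1_R\rangle\bigr)$, i.e. to show it contributes an extra factor of the same order as the spin-wave energy, up to the small multiplicative constant $K\,M(\beta) \asymp \beta e^{-2\pi^2\beta}$.

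For that, I would apply Lemma \ref{l.Fourier_m} with the test $2$-form $h = 2\pi\,\proj{3}{2}\1_R = \1_R - \proj{1}{2}\1_R$ (using $\1_R = \proj{1}{2}\1_R + \proj{3}{2}\1_R$, since $\Omega^{4\to 3}$-type components vanish on a $2$-form in $4d$... more precisely $\1_R$ decomposes into its $\proj{1}{2}$ and $\proj{3}{2}$ parts). Fix $b < \tilde b(\beta)$ (as in the lemma; one may take $b$ a fixed small constant once $\beta \geq 1$). The lemma yields
\[
\E^{\Vil}_\beta\left[e^{i\langle m, h\rangle}\right] \leq \exp\left(-\tfrac{(1-bK_\beta)}{2}\,\Bigl(\inf_a \var^{IG}(a,(2\pi)^2\beta)\Bigr)\,\langle h^b, h^b\rangle\right),
\]
where $h^b = h\,\1_{|h|<b}$. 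Recalling $M(\beta) = (2\pi)^2\beta\,\inf_a \var^{IG}(a,(2\pi)^2\beta)$, the exponent is $-\tfrac{(1-bK_\beta)}{2}\cdot\tfrac{M(\beta)}{(2\pi)^2\beta}\langle h^b,h^b\rangle$. Now the key point is to replace $\langle h^b,h^b\rangle$ by a constant times $\langle \proj{1}{2}\1_R, \proj{1}{2}\1_R\rangle$. Since $h = 2\pi\,\proj{3}{2}\1_R$ and, on $\gamma$, $\proj{3}{2}\1_R = \1_\gamma\text{-part} - \proj{1}{2}\1_R$ differs from $-\proj{1}{2}\1_R$ by an explicit bounded quantity (on the edges of $\gamma$, $\1_R$ equals $0$ while off $\gamma$, $\proj{1}{2}\1_R + \proj{3}{2}\1_R = 0$ so they are negatives of each other), one has $|h| = 2\pi|\proj{1}{2}\1_R|$ on the relevant cells up to harmless boundary corrections; hence the truncation $\1_{|h|<b}$ is, up to a rescaling $b \mapsto b/(2\pi)$, the same truncation appearing in Proposition \ref{p.spread_energy}. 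That proposition then gives $\langle h^b, h^b\rangle \geq (2\pi)^2 C(b)\,\langle \proj{1}{2}\1_R, \proj{1}{2}\1_R\rangle$ for a constant $C(b)>0$ depending only on $b$ (and $b$ only on $\beta$, but bounded below once $\beta\geq 1$). Combining, the vortex factor is at most $\exp\bigl(-\tfrac{(1-bK_\beta)C(b)}{2}\cdot\tfrac{M(\beta)}{\beta}\langle \proj{1}{2}\1_R,\proj{1}{2}\1_R\rangle\bigr)$, which is of the claimed form with $K := (1-bK_\beta)C(b)$ a positive constant (uniform over $\beta\geq 1$, choosing $b$ fixed small).

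Multiplying the two factors gives
\[
\E^{\Vil}_\beta\left[e^{i\langle\d\theta,\1_R\rangle}\right] \leq \exp\left(-\tfrac{1 + K M(\beta)}{2\beta}\langle \proj{1}{2}\1_R, \proj{1}{2}\1_R\rangle\right),
\]
and inserting the energy estimate of Proposition \ref{p.energy_wilson}, $\langle\proj{1}{2}\1_R,\proj{1}{2}\1_R\rangle = 2C_{GFF}(L+H) + O(L^{2/3})$, yields the second displayed bound. Finally, using $M(\beta) \geq 2\beta e^{-2\pi^2\beta}$ from Proposition \ref{pr.Monotonicity_IG}(iii) and $|\gamma| = 2(L+H)$ converts this into the form stated in Theorem \ref{th.main}. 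The main obstacle, and the step requiring the most care, is the comparison between the truncated energy $\langle h^b, h^b\rangle$ and the full energy $\langle \proj{1}{2}\1_R, \proj{1}{2}\1_R\rangle$: one must verify that the bulk of the Dirichlet energy of $\proj{1}{2}\1_R$ is carried by cells where $|\proj{1}{2}\1_R|$ is below the threshold $b$ — this is precisely the "well-spread" content of Proposition \ref{p.spread_energy}, whose proof shows the energy accumulates from the polynomially-decaying tails $\epsilon(k)\asymp k^{-2}$ at distance $k$ from the sides of $R$, none of which individually exceeds $b$ once $k$ is moderately large, while the finitely many near-boundary cells contribute only $O(1)$ and are negligible against the $\Theta(L+H)$ total. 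A secondary point to handle cleanly is the identification of $|h| = 2\pi|\proj{3}{2}\1_R|$ with $2\pi|\proj{1}{2}\1_R|$ off the loop (so that the two truncations match up to the constant $2\pi$), and checking the infinite-volume and Dirichlet-boundary cases via Lemma \ref{l.approximation2} exactly as in Propositions \ref{p.energy_wilson} and \ref{p.spread_energy}.
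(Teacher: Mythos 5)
Your overall strategy is the same as the paper's: decouple via Corollary \ref{c.correlation}, evaluate the Gaussian factor with Proposition \ref{p.energy_wilson}, and control the vortex factor with Lemma \ref{l.Fourier_m} plus the well-spreadness of Proposition \ref{p.spread_energy}. However, there is a genuine gap in the step where you apply Lemma \ref{l.Fourier_m} to $h=2\pi\,\proj{3}{2}\1_R$ and then claim that $|h|=2\pi|\proj{1}{2}\1_R|$ ``on the relevant cells up to harmless boundary corrections,'' so that $\langle h^b,h^b\rangle$ can be bounded below via Proposition \ref{p.spread_energy}. This identification is false precisely where it matters. The form $\1_R$ is the indicator of the $\sim LH$ \emph{plaquettes} of the rectangle (not of the loop $\gamma$, as your parenthetical suggests), so on every $2$-cell $f\in R$ one has $\proj{3}{2}\1_R(f)=1-\proj{1}{2}\1_R(f)$, which is close to $1$ in the bulk of $R$ where $\proj{1}{2}\1_R(f)=O(k^{-1})$ is small; hence $|h(f)|\approx 2\pi\gg b$ there and the truncation $\1_{|h|<b}$ discards \emph{all} of these cells. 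But the cells $f\in R$ at distance $k$ from a side are exactly the ones from which the proof of Proposition \ref{p.spread_energy} extracts its lower bound $\epsilon(k)\gtrsim k^{-2}$ per cell. So the two truncated energies $\langle h^b,h^b\rangle$ and $\|(\proj{1}{2}\1_R)\1_{|\proj{1}{2}\1_R|<b}\|^2$ are supported on essentially complementary sets of cells, and the proposition cannot be invoked as you do.

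The paper's fix is one line and uses exactly the integrality you mention but do not deploy: since $m$ and $\1_R$ are integer-valued, $e^{i2\pi\langle m,\1_R\rangle}=1$, hence $\E^{\Vil}_\beta\bigl[e^{i2\pi\langle m,\proj{3}{2}\1_R\rangle}\bigr]=\E^{\Vil}_\beta\bigl[e^{-i2\pi\langle m,\proj{1}{2}\1_R\rangle}\bigr]$, and Lemma \ref{l.Fourier_m} is then applied with $h=2\pi\proj{1}{2}\1_R$, whose truncation is literally the one in Proposition \ref{p.spread_energy}. Alternatively you could salvage your route by lower-bounding $\langle h^b,h^b\rangle$ using only cells \emph{outside} $R$ (where $\proj{3}{2}\1_R=-\proj{1}{2}\1_R$ does hold, and where by the antisymmetry of $\d\Delta^{-1}\1_\gamma$ across the sides the energy per cell is comparable), but that requires redoing the estimate of Proposition \ref{p.spread_energy} off $R$, which neither the paper nor your proposal provides. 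The remaining steps (uniform choice of $b<\tilde b(1)\wedge \tfrac{1}{2K_\beta}$ for $\beta\geq 1$, conversion to $M(\beta)$, and the passage to infinite volume) are correct up to bookkeeping of the $(2\pi)^2$ factors in the final constant $K$.
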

\begin{proof}
	We start by using Corollary \ref{c.correlation}, in particular \eqref{e.Fourier_decoupling_integer_f}
	\begin{align}
	\E_{\beta}^{Vil}\left[e^{i\langle \d \theta, \1_R \rangle} \right] &= \E_{\beta}^{GSW}\left[e^{i\langle \varrho , \1_R \rangle} \right] \E_{\beta}^{Vil}\left[e^{i2\pi \langle m,\proj{3}{2} \1_R \rangle} \right] \nonumber \\
	\label{e.end}
	&\leq\E_{\beta}^{GSW}\left[e^{i\langle \varrho , \1_R \rangle} \right] \E_{\beta}^{Vil}\left[e^{i2\pi \langle m,\proj{1}{2} \1_R \rangle} \right],
	\end{align}
	where we used that $\proj{3}{2} \1_R = \1_R - \proj{1}{2}\1_R$ and $\1_R \in \Z^{C^2}$. Then, taking a deterministic $b< \bar b ( 1)$, 
we can use Lemma \ref{l.Fourier_m} together with Proposition \ref{p.spread_energy}  to see that
	\begin{align}
	\E_{\beta}^{Vil}\left[e^{i2\pi \langle m,\proj{1}{2} \1_R \rangle} \right]
	&\leq e^{- \frac{(1-bK_\beta)}{2} \inf_a \var^{IG}(a,(2\pi)^2\beta)
	 \| \proj{1}{2} \1_R \1_{|\proj{1}{2} \1_R|<b}\|^2 }  \nonumber\\
	 & \leq  e^{- \frac{(1-bK_\beta)}{2} \inf_a \var^{IG}(a,(2\pi)^2\beta) 
	 C(b) \langle \proj{1}{2} \1_R, \proj{1}{2} \1_R \rangle} \nonumber \\
	  & \leq  e^{- \frac{1}{4} \inf_a \var^{IG}(a,(2\pi)^2\beta) 
	 C(b) \langle \proj{1}{2} \1_R, \proj{1}{2} \1_R \rangle} \,,\nonumber	
	 \end{align}
by choosing $b$ sufficiently small (i.e. $b< \bar b(1) \wedge\frac 1 {2 K_\beta}$).  Now recalling the definition of the error function $M(\beta)$ from~\eqref{e.M} this gives us  
\begin{align*}\label{}
\E_{\beta}^{Vil}\left[e^{i2\pi \langle m,\proj{1}{2} \1_R \rangle} \right]
& \leq e^{-\frac 1 {4 (2\pi)^2 \beta}  M(\beta) C(b)  \langle \proj{1}{2} \1_R, \proj{1}{2} \1_R \rangle}\,. 
\end{align*}
Using~\eqref{e.end} together with Proposition \ref{p.energy_wilson}, this leads us to
\begin{align*}
\E_{\beta}^{Vil}\left[e^{i\langle \d \theta, \1_R \rangle} \right] & \leq 
e^{-\frac 1 {2 \beta} \langle \proj{1}{2} \1_R, \proj{1}{2} \1_R \rangle  } e^{- \frac 1{2\beta}  \big( \frac {C(b)} {2 (2\pi)^2} \big) M(\beta) \langle \proj{1}{2} \1_R, \proj{1}{2} \1_R \rangle  }\,,
\end{align*}
which concludes our proof with $K:= \frac {C(b)} {2 (2\pi)^2}$.

In order to obtain a spin-wave improvement also in the case of the infinite-volume limit (whose existence is provided in Proposition \ref{pr.IVL}), notice as in \cite[Section 5]{Sourav},  that our above spin-wave improvement is uniform in $\Lambda_j \nearrow \Z^4$ which thus concludes the  proof of Theorem \ref{th.main}.

\end{proof}

\section{The $e^{-\pi^2 \beta}$ correction to the free energy}\label{s.FE}

\subsection{Variance of the Coulomb gas.}
The objective of the following section is to obtain quantitatives bounds on the variance of a Coulomb gas (Definition \ref{d.Coulomb})  which match with the RG predictions from \cite{guth1980}. 
This will be key to obtain bounds on the free-energy of the Coulomb gas. To improve our results, we will need a more accurate error function that will depend on the edge we are looking as in \cite{GS2}. 
\begin{align}
\tilde M (\beta,f):= \E^{Vil}_\beta \left[\var^{IG}\left (-\frac{\d\theta(f)}{2\pi}, (2\pi)^2\beta\right ) \right],
\end{align}
for any $f\in \overrightarrow C^2(\Lambda)$.
\begin{proposition}\label{p.lower_bound_variance_q}
Let $h$ be a real valued $3$-form and let $q$ be a Coulomb gas at inverse-temperature $\beta$ (with either boundary condition). Then,
\begin{align*}
\E^{Coul}_\beta\left[  \langle q,  h \rangle^2\right] \geq \langle \tilde M(\beta,\cdot )   \d^* h, \d^* h \rangle.
\end{align*}
\end{proposition}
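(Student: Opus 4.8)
The plan is to transfer the computation from the Coulomb gas $q$ to the white-noise $2$-form $m$, using the identity $q=\d m$ from Proposition~\ref{p.decouplingF}, and then to estimate by conditioning on $\theta$ and invoking the explicit conditional law of $m$ from Proposition~\ref{pr.Villain}. The mechanism is simply that $\langle q,h\rangle=\langle\d m,h\rangle=-\langle m,\d^* h\rangle$, so the object to control is the second moment of a linear functional of an inhomogeneous discrete white noise.

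Concretely, I would take $(\theta,m)$ a Villain $U(1)$-lattice gauge coupling at inverse temperature $\beta$ with the relevant boundary condition, so that by Proposition~\ref{p.decouplingF} the $3$-form $q:=\d m$ has the law of the Coulomb gas. For a real-valued $3$-form $h$ this gives
\[
\E^{Coul}_\beta\bigl[\langle q,h\rangle^2\bigr]=\E^{Vil}_\beta\bigl[\langle\d m,h\rangle^2\bigr]=\E^{Vil}_\beta\bigl[\langle m,\d^* h\rangle^2\bigr],
\]
using $\d^*=-\d^t$ (the sign disappearing after squaring), and with $\d^*$ replaced by $\mathring \d^*$ for zero-boundary conditions, where one may also assume $h\in\mathring\Omega^3$ since $q$ vanishes on $\partial C^3$. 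Writing $g:=\d^* h$ (resp. $\mathring \d^* h$), a fixed real $2$-form, I would then condition on $\theta$ and use $\E[X^2]\geq\var[X]$ to get
\[
\E^{Vil}_\beta\bigl[\langle m,g\rangle^2\bigr]\geq\E^{Vil}_\beta\bigl[\var[\langle m,g\rangle\mid\theta]\bigr].
\]
By Proposition~\ref{pr.Villain}, conditionally on $\theta$ the variables $(m(f))_{f\in C^2}$ are independent, with $m(f)$ an IV-Gaussian of inverse temperature $(2\pi)^2\beta$ centred at $-\d\theta(f)/(2\pi)$ (and $m(f)=0$ on $\partial C^2$ under zero-boundary conditions, where $\mathring \d^* h$ also vanishes), so that
\[
\var[\langle m,g\rangle\mid\theta]=\sum_{f\in C^2}g(f)^2\,\var^{IG}\!\bigl(-\tfrac{\d\theta(f)}{2\pi},(2\pi)^2\beta\bigr).
\]
Taking $\E^{Vil}_\beta$ and recalling the definition of $\tilde M(\beta,\cdot)$ then yields
\[
\E^{Coul}_\beta\bigl[\langle q,h\rangle^2\bigr]\geq\sum_{f\in C^2}g(f)^2\,\tilde M(\beta,f)=\langle\tilde M(\beta,\cdot)\,\d^* h,\d^* h\rangle,
\]
which is the claim.

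I do not expect a genuine obstacle: the entire content is the decoupling identity $q=\d m$ together with the elementary inequality $\E[X^2]\geq\var[X]$. The only points needing care are bookkeeping — fixing the adjoint and sign convention (irrelevant up to the square) and checking that the degeneracy $m(f)\equiv 0$ on $\partial C^2$ in the Dirichlet case is compatible with the right-hand side precisely because $\mathring \d^* h$ vanishes there. It is worth noting that the step discarding the nonnegative term $\E[\langle m,g\rangle\mid\theta]^2$ is exactly where the information about the conditional mean of $m$ is dropped, and that retaining only the conditional variance is what produces the $\theta$-averaged error function $\tilde M(\beta,\cdot)$, in contrast with the worst-case quantity $\inf_a\var^{IG}(a,(2\pi)^2\beta)$ used in Lemma~\ref{l.Fourier_m}.
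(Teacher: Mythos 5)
Your proposal is correct and follows essentially the same route as the paper: write $q=\d m$ via Proposition \ref{p.decouplingF}, move $\d$ onto $h$, condition on $\theta$, discard the conditional-mean contribution, and use the conditional independence and IV-Gaussian law of $(m(f))_f$ from Proposition \ref{pr.Villain}. The only cosmetic difference is that the paper phrases the dropping of the mean term via the law of total variance, whereas you use $\E[X^2\mid\theta]\geq\var[X\mid\theta]$ directly, which is the same estimate.
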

The proof of this proposition follows in the same way as equation (6.2) of \cite{GS2}.
\begin{proof}
	We start using the coupling of Proposition \ref{p.decouplingF}. We may thus write $q:= \d m$ and we now use the law of the total variance to see that
	\begin{align*}
	\E^{Coul}_\beta\left[  \langle q,  h \rangle^2\right]&= \E^{Vil}_\beta\left[ \var\left[ \langle  \d m,  h \rangle \mid \theta \right] \right] + \var^{Vil}_\beta\left[\E\left[ \langle  \d m,  h \rangle \mid \theta \right]\right ]\\
	&\geq  \E^{Vil}_\beta\left[ \var\left[ \langle m, \d^* h \rangle \mid \theta \right] \right]\\
	&= \sum _{f \in C^{2}} \E_\beta^{Vil}\left[ \var^{IG}\left (-\frac{\d\theta(f)}{2\pi}, (2\pi)^2\beta\right ) \right] (\d^* h(f))^2\\
	&=\langle \tilde M(\beta,\cdot )   \d^* h, \d^* h \rangle,
	\end{align*}
	where in the third line we use that the law of $m$ conditionally on $\theta$ is that of an integer-valued Gaussian centered at $-\d\theta(f)/(2\pi)$ and at inverse-temperature $(2\pi)^2\beta$.
\end{proof}

\subsection{A sharper lower bound on $\tilde M(\beta,f)$.} It is clear that for any $f\in \overrightarrow{C}^2$, we have that $\tilde M(\beta,f)\geq(2\pi\beta)^{-1} M(\beta)$. The following proposition improves this bound.

\begin{proposition}\label{p.tilde_M}
	For any $\delta>0$, there exists $\beta_0<\infty$ such that the following holds: for any $\beta\geq \beta_0$,  there exists $L_0\in\N$ such that  for any $j\geq L_0$, if one considers the Villain coupling $(\theta,m)$ on $\Lambda_j$ with either free or Dirichlet boundary conditions, then for all $2$-cells $f$ at distance greater than or equal to $L_0$ of the boundary one has that	
	\begin{align*}
	\tilde M(\beta,f)\geq e^{-\pi^2 \beta (1+\delta) }.
	\end{align*}
	
	In simpler terms, for any $2$-cell $f$ of $\Z^4$ one has that
	\begin{align*}
	\liminf_{\beta\nearrow\infty} \liminf_{\Lambda_j\nearrow \Z^4} \frac{1}{\beta}\log (\tilde M(\beta,f))\geq -\pi^2
	\end{align*}
\end{proposition}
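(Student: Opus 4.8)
\emph{Proof sketch.} The plan is to lower bound $\tilde M(\beta,f)$ by conditioning on a single minimal topological defect sitting on the plaquette $f$. The starting point is the identity $\tilde M(\beta,f)=\E^{Vil}_\beta[\var[m(f)\mid\theta]]$ (Proposition \ref{pr.Villain}) together with the lower bound \eqref{e.variance_IG_bound}; since $\var^{IG}(a,(2\pi)^2\beta)$ depends on $a$ only through $\mathrm{dist}(a,\Z)$, this gives
\begin{align*}
\tilde M(\beta,f)\;\geq\;\frac{1}{16}\,\E^{Vil}_\beta\Big[e^{-2\pi^2\beta\,\bigl(1-2\,\mathrm{dist}(\d\theta(f)/(2\pi),\,\Z)\bigr)}\Big].
\end{align*}
Hence it suffices to show that for every small $\eta>0$ the \emph{local} event $\{\mathrm{dist}(\d\theta(f)/(2\pi),\Z)\geq\tfrac12-\eta\}$ has probability at least $e^{-\pi^2\beta(1+o(1))}$ as $\beta\to\infty$, uniformly for $f$ deep inside $\Lambda$ (then $\var^{IG}\geq\tfrac1{16}e^{-4\pi^2\beta\eta}$ on this event, and optimising over $\eta$ finishes). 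By the decoupling Proposition \ref{p.decouplingF} and the identity $\d\theta(f)\equiv\varrho(f)+2\pi(\d^*\Delta^{-1}q)(f)\pmod{2\pi}$ from the proof of Corollary \ref{c.correlation}, it is enough to produce, with the right probability, an independent pair $(\varrho,q)$ with $\varrho(f)$ close to $0$ and $(\d^*\Delta^{-1}q)(f)$ close to a half-integer.

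The natural candidate is $q=\d\1_f$, the minimal Coulomb-gas defect supported on the four $3$-cells containing $f$. Two computations drive the argument. First, by Hodge duality on $\Z^4$ (which swaps $\d\leftrightarrow\d^*$, hence $\proj{1}{2}\leftrightarrow\proj{3}{2}$) together with the lattice symmetry permuting plaquette orientations, one gets $\|\proj{1}{2}\1_f\|^2=\|\proj{3}{2}\1_f\|^2=\tfrac12$; in a finite box this holds up to an error $O(\mathrm{dist}(f,\p\Lambda)^{-2})$, which is exactly where Section \ref{s.Laplacian_1} and Lemma \ref{l.approximation2} enter, since $\|\proj{1}{2}\1_f\|^2=-\langle\Delta^{-1}\d^*\1_f,\d^*\1_f\rangle$ only involves the Laplacian on $1$-forms. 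Consequently $(\d^*\Delta^{-1}\d\1_f)(f)=\|\proj{3}{2}\1_f\|^2=\tfrac12+O(\mathrm{dist}(f,\p\Lambda)^{-2})$, so on $\{q=\d\1_f\}$ one has $\d\theta(f)\equiv\pi\pmod{2\pi}$ up to a small error. Second, the Coulomb-gas energy of this configuration is exactly $\tfrac{\beta(2\pi)^2}{2}\|\proj{3}{2}\1_f\|^2=\pi^2\beta$ (again up to an $O(\mathrm{dist}^{-2})$ error), which is precisely Guth's exponent. Since moreover $\P^{GSW}_\beta[|\varrho(f)|<\varepsilon]\to1$ as $\beta\to\infty$ (the spin-wave variance at $f$ is $O(1/\beta)$), on the event that $q$ carries a minimal defect near $f$ and $\varrho(f)$ is small we obtain $\var^{IG}(-\d\theta(f)/(2\pi),(2\pi)^2\beta)\geq\tfrac1{16}e^{-4\pi^2\beta\eta}$, multiplied by the probability of the defect.

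The main obstacle is making the probability of ``a minimal defect near $f$'' uniform in the size of $\Lambda$: the global event $\{q=\d\1_f\}$ has probability $e^{-\pi^2\beta}/Z^{Coul}_{\beta,\Lambda}$, and $Z^{Coul}_{\beta,\Lambda}$ grows with $|\Lambda|$, so a single-configuration bound is not enough. I would instead condition on the Coulomb gas $q_{\mathrm{ext}}$ outside a fixed-size neighbourhood $N_f$ of $f$ and invoke the DLR/finite-energy property: given a boundary configuration with no defect in $N_f$ and with small ``exterior electric field'' $(\d^*\Delta^{-1}q_{\mathrm{ext}})(f)$, the conditional probability of completing $q$ by inserting $\d\1_f$ near $f$ is $\gtrsim e^{-\pi^2\beta(1+o(1))}$ — the conditional insertion energy being $\tfrac{\beta(2\pi)^2}{2}\|\proj{3}{2}\1_f\|^2$ plus a term controlled by the exterior field — while the set of boundary configurations with small exterior field at $f$ has probability bounded below uniformly in $\Lambda$; this last point is a Debye-screening/field-tightness input, available from \cite{FSrestoration,guth1980}. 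Taking expectations over $q_{\mathrm{ext}}$ and multiplying by the GSW factor gives $\tilde M(\beta,f)\geq c\,e^{-\pi^2\beta(1+4\eta+o(1))}$; choosing $\eta=\delta/8$ and $\beta$ large proves the first assertion, and the $\liminf\liminf$ statement then follows by first letting $\Lambda_j\nearrow\Z^4$ (for a fixed $2$-cell $f$, which eventually sits deep inside) and then $\beta\to\infty$ with $\delta\downarrow0$.
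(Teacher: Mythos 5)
Your reduction to lower-bounding $\P^{Vil}_\beta\bigl[\mathrm{dist}(\d\theta(f)/(2\pi),\Z)\geq\tfrac12-\eta\bigr]$ via \eqref{e.variance_IG_bound} is exactly right, and your computations of the minimal defect's energy and of $(\d^*\Delta^{-1}\d\1_f)(f)=\|\proj{3}{2}\1_f\|^2=\tfrac12$ are correct. But the step you yourself flag as the "main obstacle" is a genuine gap, not a technicality: to make the insertion of $q=\d\1_f$ cost only $e^{-\pi^2\beta(1+o(1))}$ uniformly in $\Lambda$, you need a finite-energy/DLR estimate for a gas with long-range $(-\Delta)^{-1}$ interactions, including (i) a uniform-in-$\Lambda$ lower bound on the probability that the exterior field $(\d^*\Delta^{-1}q_{\mathrm{ext}})(f)$ is small, and (ii) control of the cross term $\beta(2\pi)^2\langle\d\1_f,(-\Delta)^{-1}q_{\mathrm{ext}}\rangle$ on that event. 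Neither is stated in a usable form in \cite{FSrestoration} or \cite{guth1980}; Fr\"ohlich--Spencer's results give upper bounds on fluctuations, and extracting from them a quantitative conditional insertion estimate is a substantial piece of work that your sketch leaves as a black box. As written, the argument does not close.

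The paper avoids the Coulomb gas entirely at this step, and the mechanism is worth internalizing because it is the same $\tfrac12$ you computed, used differently. By the decoupling, $\d\theta(f)\equiv\varrho(f)+2\pi(\d^*\Delta^{-1}q)(f)\pmod{2\pi}$ with $\varrho$ \emph{independent} of $q$, and $\varrho(f)$ is a centred Gaussian whose variance is exactly $\tfrac1{2\beta}+o_{L_0}(1)$ (this is the Hodge self-duality identity $\|\proj{1}{2}\1_f\|^2=\|\proj{3}{2}\1_f\|^2=\tfrac12$ again). Hence, whatever the Coulomb contribution is, the probability that $\d\theta(f)\bmod 2\pi$ lands in a prescribed interval of length $2\epsilon$ is at least twice the worst-case Gaussian probability $\P^{GSW}_\beta[\varrho(f)\in(\pi-\epsilon,\pi)]$ (Lemma \ref{l.lower_bound_dtheta}), and this Gaussian tail is already $\asymp\epsilon\sqrt{\beta}\,e^{-\pi^2\beta(1+o(1))}$ because $\pi^2/(2\cdot\tfrac1{2\beta})=\pi^2\beta$. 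In other words, the $e^{-\pi^2\beta}$ that you are trying to produce by inserting a vortex comes for free from the spin-wave's own fluctuation, with no conditioning on $q$ and hence no uniformity issue in $\Lambda$. If you want to salvage your route, you would have to prove the screening/finite-energy input yourself; the paper's shift argument is the shortcut that makes the proposition elementary.
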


To prove the proposition we will need to control the probability that $\d \theta$ is big.
\begin{lemma}\label{l.lower_bound_dtheta}
	Let $(\theta,m)$ be a Villain model and $\varrho$ be a Gradient spin-wave both at the same inverse temperature $\beta$ and with the same boundary condition. We have that for any $f\in \overrightarrow{C}^2$, $a \in (0,2 \pi)$ and any $0\leq \epsilon \leq 2\pi -a $
	\begin{align*}
	\FK{\beta}{Vil}{\d \theta(f) \mod 2\pi  \in (a, a+2\epsilon)} \geq 
	2\, \FK{\beta}{GSW}{\varrho(f) \mod 2\pi \in(\pi-\epsilon, \pi )}.
	\end{align*}
	Here, for any $x\in \R$ we identify $x\mod 2\pi$ with a number in $[-\pi,\pi)$.
\end{lemma}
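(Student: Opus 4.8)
\emph{Proof strategy.} The plan is to transport the question from the Villain coupling onto the gradient spin-wave by means of the decoupling of Proposition~\ref{p.decouplingF}. Recall from the proof of Corollary~\ref{c.correlation} that, with $\xi:=\d\theta+2\pi m$, one has $\xi=\varrho+2\pi\,\d^{*}\Delta^{-1}q$ (with $\mathring\d^{*},\mathring\Delta$ in place of $\d^{*},\Delta$ in the zero-boundary case), where $\varrho$ is a gradient spin-wave, $q:=\d m$ a Coulomb gas, and where $\varrho$ and $q$ are independent. Since $m$ is integer valued, $\d\theta(f)\equiv\xi(f)\pmod{2\pi}$, so, setting $Y:=2\pi(\d^{*}\Delta^{-1}q)(f)$ --- a measurable function of $q$, hence independent of $\varrho(f)$ --- we obtain the distributional identity $\d\theta(f)\mod 2\pi=(\varrho(f)+Y)\mod 2\pi$. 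Conditioning on $Y$ and using this independence, the lemma will follow once we show that for \emph{every fixed} $y\in\R$
\begin{equation*}
\FK{\beta}{GSW}{(\varrho(f)+y)\mod 2\pi\in(a,a+2\epsilon)}\ \geq\ 2\,\FK{\beta}{GSW}{\varrho(f)\mod 2\pi\in(\pi-\epsilon,\pi)},
\end{equation*}
since then taking $\E_{Y}$ of the left-hand side reproduces the left-hand side of the lemma.

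Under the gradient spin-wave law, $\varrho(f)=\langle\varrho,\1_{f}\rangle$ is a centred Gaussian (Proposition~\ref{r.SW}), hence $\varrho(f)\mod 2\pi$ has the wrapped-Gaussian density $\tilde g$, which is $2\pi$-periodic, even about $0$, and therefore also even about $\pi$. The left-hand side above equals $\int_{c}^{c+2\epsilon}\tilde g$ with $c:=a-y$ arbitrary, while by evenness about $\pi$ the right-hand side equals $\int_{\pi-\epsilon}^{\pi}\tilde g+\int_{\pi}^{\pi+\epsilon}\tilde g=\int_{\pi-\epsilon}^{\pi+\epsilon}\tilde g$. (We may assume $2\epsilon<2\pi$, which is the only range used; otherwise the left-hand side is $1$.) So it suffices to prove that, among all arcs of length $2\epsilon$, the one centred at $\pi$ carries the least $\tilde g$-mass, i.e.
\begin{equation*}
F(c):=\int_{c}^{c+2\epsilon}\tilde g\ \geq\ \int_{\pi-\epsilon}^{\pi+\epsilon}\tilde g\qquad\text{for every }c\in\R .
\end{equation*}
Since $F'(c)=\tilde g(c+2\epsilon)-\tilde g(c)$ and $\tilde g$ is strictly decreasing on $[0,\pi]$ (hence strictly increasing on $[-\pi,0]$), the equation $\tilde g(c)=\tilde g(c+2\epsilon)$ forces $c\equiv-\epsilon$ or $c\equiv\pi-\epsilon\pmod{2\pi}$; thus $F$ has exactly two critical points on the circle, a maximum at the arc centred at $0$ and a minimum at the arc centred at $\pi$. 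Therefore $F(c)\geq F(\pi-\epsilon)=\int_{\pi-\epsilon}^{\pi+\epsilon}\tilde g$, as required. The zero-boundary case is identical.

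The only step requiring an idea beyond bookkeeping is the circular unimodality of the wrapped normal law invoked above, equivalently the monotonicity on $[0,\pi]$ of the circle heat kernel $\tilde g$; this is classical, and in the write-up I would record a short proof --- e.g.\ from $\tilde g(x)=\tfrac1{2\pi}\bigl(1+2\sum_{n\geq1}q^{n^{2}}\cos nx\bigr)$ with $q=e^{-\var[\varrho(f)]/2}\in(0,1)$, via the positivity of $\sum_{n\geq1}n\,q^{n^{2}}\sin nx$ on $[0,\pi]$, or from the Jacobi representation of $\tilde g$ as a sum of shifted Gaussians. One mild point of care is the meaning of ``$x\mod 2\pi\in(a,a+2\epsilon)$'' when the interval is not contained in a fundamental domain, which we read --- consistently with the computations above --- as ``$x+2\pi k\in(a,a+2\epsilon)$ for some $k\in\Z$'', so that every identity used is exact.
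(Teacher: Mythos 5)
Your proof is correct and follows essentially the same route as the paper: both use the decoupling $\d\theta+2\pi m=\varrho+2\pi\,\d^*\Delta^{-1}q$ from Proposition \ref{p.decouplingF}, condition on the (independent) Coulomb shift, and then argue that among all arcs of length $2\epsilon$ on the circle the one centred at $\pi$ carries the least wrapped-Gaussian mass. The only difference is that you make explicit, and indicate how to prove, the circular unimodality of the wrapped normal density, a step the paper's proof asserts without justification.
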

\begin{proof}
	We use again the coupling from Proposition \ref{p.decouplingF} to see that
	\begin{align*}
	\d \theta + 2\pi m = \varrho + (2\pi)\d^* \Delta^{-1} q,
	\end{align*}
	where $\varrho$ and $q$ are independent. Using this we see that
	\begin{align*}
	&\P^{Vil}_\beta(\d \theta(f) \mod 2\pi \in (a,a+2\epsilon)) \\
	&\hspace{0.1\textwidth} \dgreen{=} \P^{Vil}_\beta\left(\varrho(f) + (2\pi)\d^* \Delta^{-1} q(f) \mod 2\pi \in (a,a+2\epsilon) \right)\\
	&\hspace{0.1\textwidth}\geq \E^{Coul}_{\beta}\left[\P^{GSW}_\beta\left (\varrho(f) \mod 2\pi \in (a_q,b_q )\right ) \mid q \right], 
	\end{align*}
	where $[a_q,b_q] \subseteq [-\pi,\pi)$ is the interval (or union of two-intervals) of size $2\epsilon$ whose boundary are $a_q=a- (2\pi)\d^* \Delta^{-1} q \mod 2\pi$ and $b_q= a+2\epsilon - (2\pi)\d^* \Delta^{-1} q)$. Using that the interval $[a_q,b_q]$ has size $2\epsilon$, the fact that $\varrho(f)$ is a centred Gaussian random variable, we see that for any $q$
	\begin{align*}
	\FK{\beta}{GSW}{\varrho(f) \mod 2\pi \in (a_q,b_q )}\geq 2\FK{\beta}{GSW}{\varrho(f) \mod 2\pi \in(\pi-\epsilon, \pi)}\,,
	\end{align*}
	from where we conclude.
\end{proof}

We shall need to lower bound the probability that a Gaussian spin-wave is big in a given $2$-form. This is done in the following lemma.
\begin{lemma}
 Let us work in the context of Proposition \ref{p.tilde_M}. We have that for any $f \in \overrightarrow C^{2}(\Lambda_n)$ at distance bigger than $L_0$ from the boundary
 \begin{align}\label{e.var_rho}
 \var(\rho(f))=\frac{1}{2\beta} + o_{L_0}(1),
 \end{align}
 where $o_{L_0}(1)$ goes to $0$ as $L_0$ goes to infinity, uniformly on $n\geq L_0$ and $f$. Thus,
 \begin{align}\label{e.var_rho_consequence}
 \inf_{n\in \N}\inf_{\substack{ f \in \overrightarrow{C^2}(\Lambda_j)\\ d(f, \partial \Lambda_j)\geq L_0} }\FK{\beta}{GSW}{\varrho(f)\in (\pi-\epsilon,\pi) } \geq \frac{\epsilon \sqrt{\beta }}{\sqrt{2\pi}} e^{- \beta \pi^2(1+o_{L_0}(1)) }
 \end{align}
\end{lemma}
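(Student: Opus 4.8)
The plan is to prove \eqref{e.var_rho} by computing $\var(\varrho(f))$ exactly on $\Z^4$ and comparing with finite volume, and then to deduce \eqref{e.var_rho_consequence} from an elementary Gaussian tail bound. For the exact computation, recall (Definition \ref{d.SW}) that $\varrho\stackrel{law}{=}\d\phi$ with $\phi$ a $\beta$-GFF on $1$-forms, so that, keeping track of the $\tfrac1\beta$ in the GFF normalization as in Proposition \ref{r.SW}, $\varrho(f)=\langle\varrho,\1_f\rangle$ is a centred Gaussian with $\var(\varrho(f))=\tfrac1\beta\|\proj{1}{2}\1_f\|^2$, and — exactly as in the identity opening Section \ref{s.energy} — $\|\proj{1}{2}\1_f\|^2=\langle(-\Delta)^{-1}\1_{\gamma_f},\1_{\gamma_f}\rangle$, where $\gamma_f:=\p f$ is the $1$-form carried with orientation signs by the four edges bounding $f$. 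If $f=(e_i\wedge e_j)_x$ then $\gamma_f$ has two edges in direction $e_i$, say $a_1$ and $a_2=a_1+e_j$ with opposite signs, and two in direction $e_j$; by Corollary \ref{c.Green_1} the mixed-direction pairs contribute nothing and the same-direction pairs reduce to nearest-neighbour values of $G_{\mathcal G_i}\cong G_{\Z^4}$, which gives $\|\proj{1}{2}\1_f\|^2=4\big(G_{\Z^4}(0,0)-G_{\Z^4}(0,e_1)\big)$ on $\Z^4$ (two copies of that difference from each direction). Combined with the harmonicity relation $8\,G_{\Z^4}(0,0)-\sum_{y\sim 0}G_{\Z^4}(0,y)=1$ and the fact that $G_{\Z^4}(0,y)$ takes the same value at all eight neighbours $y$ of $0$, this gives $G_{\Z^4}(0,0)-G_{\Z^4}(0,e_1)=\tfrac18$, hence $\|\proj{1}{2}\1_f\|^2=\tfrac12$ and $\var(\varrho(f))=\tfrac1{2\beta}$ on $\Z^4$.

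For the passage to $\Lambda=\Lambda_n$ with either boundary condition, note that if $f$ is at distance $\ge L_0$ from $\p\Lambda_n$ then the four edges of $\gamma_f$ lie at graph-distance $\ge L_0-O(1)$ from the boundary of $\mathcal G_i(\Lambda_n)$, resp.\ $\mathring{\mathcal G}_i(\Lambda_n)$. Lemma \ref{l.approximation2} (with $n=4$) then replaces each of the four Green-function entries above by its $\Z^4$-value up to an additive $O(L_0^{-2})$ error, with a constant uniform in $n$, in $f$, and independent of $\beta$. Hence $\|\proj{1}{2}\1_f\|^2=\tfrac12+O(L_0^{-2})$ and $\var(\varrho(f))=\tfrac1{2\beta}\big(1+o_{L_0}(1)\big)$ uniformly, which is \eqref{e.var_rho}.

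To obtain \eqref{e.var_rho_consequence}, set $\sigma^2:=\var(\varrho(f))=\tfrac{1+\eta}{2\beta}$ with $\eta=O(L_0^{-2})$, and assume $0<\epsilon<\pi$ (the only relevant range). Since the centred Gaussian density is decreasing on $(0,\infty)$,
\begin{align*}
\FK{\beta}{GSW}{\varrho(f)\in(\pi-\epsilon,\pi)}=\int_{\pi-\epsilon}^{\pi}\frac{e^{-x^2/(2\sigma^2)}}{\sqrt{2\pi\sigma^2}}\,dx\ \geq\ \frac{\epsilon}{\sqrt{2\pi\sigma^2}}\,e^{-\pi^2/(2\sigma^2)}.
\end{align*}
Now $\tfrac1{2\sigma^2}=\beta(1+o_{L_0}(1))$ and $\tfrac1{\sqrt{2\pi\sigma^2}}=\sqrt{\beta/\pi}\,(1+o_{L_0}(1))\ge\tfrac{\sqrt\beta}{\sqrt{2\pi}}$ once $L_0$ is large (uniformly in $\beta$), so the right-hand side is $\ge\tfrac{\epsilon\sqrt\beta}{\sqrt{2\pi}}\,e^{-\pi^2\beta(1+o_{L_0}(1))}$, which is \eqref{e.var_rho_consequence}; the uniformity in $n$ and $f$ in both estimates is inherited from that of Lemma \ref{l.approximation2}.

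The whole argument is essentially a computation, and the delicate points are bookkeeping ones. The main one is pinning down the exact value $G_{\Z^4}(0,0)-G_{\Z^4}(0,e_1)=\tfrac18$, equivalently $\|\proj{1}{2}\1_f\|^2=\tfrac12$: it is this precise constant that turns into the sharp exponent $\pi^2$ matching Guth's renormalization-group prediction, so a factor-of-$2$ slip here would degrade the final result. The second is making sure every error term stays uniform in the box size $n$, in the plaquette $f$, and — crucially for the exponent in \eqref{e.var_rho_consequence} — in $\beta$; this last uniformity is automatic, since neither Lemma \ref{l.approximation2} nor the single-plaquette computation of $\|\proj{1}{2}\1_f\|^2$ involves the inverse temperature at all.
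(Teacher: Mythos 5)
Your proof is correct, but it reaches the constant $\tfrac{1}{2\beta}$ by a genuinely different route than the paper. The paper's argument is structural: it introduces an independent GFF $\phi^*$ on $3$-forms, sets $\varrho^*=\d^*\phi^*$, observes that $\varrho+\varrho^*$ is a white noise on $2$-cells of total variance $\tfrac1\beta$ per plaquette, and then invokes the Hodge self-duality of $2$-forms in $\Z^4$ to conclude $\var(\varrho_\infty(f))=\var(\varrho^*_\infty(f))$, hence each equals $\tfrac1{2\beta}$ --- no Green's function is ever evaluated. You instead compute $\|\proj{1}{2}\1_f\|^2=\langle(-\Delta)^{-1}\1_{\p f},\1_{\p f}\rangle$ directly: Corollary \ref{c.Green_1} kills the mixed-direction terms, the same-direction terms give $4\bigl(G_{\Z^4}(0,0)-G_{\Z^4}(0,e_1)\bigr)$, and harmonicity pins this to $4\cdot\tfrac18=\tfrac12$; this is a correct and self-contained alternative (and the sign bookkeeping on $\p f$ checks out). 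Each approach buys something: the paper's duality argument explains \emph{why} exactly half of the white-noise variance sits in the gradient part (self-duality of the middle degree in dimension $4$), and generalizes without computation; your argument, by running everything through Lemma \ref{l.approximation2}, yields an explicit and $\beta$-independent rate $O(L_0^{-2})$ for the $o_{L_0}(1)$ error, uniformly in $n$ and $f$, whereas the paper's passage to the infinite volume via "convergence in law" leaves that uniformity more implicit. Your derivation of \eqref{e.var_rho_consequence} from \eqref{e.var_rho} via the monotonicity of the Gaussian density on $(0,\infty)$ is the same elementary step the paper treats as immediate, and your tracking of the prefactor $\sqrt{\beta/(2\pi)}$ and of the sign-indefinite $o_{L_0}(1)$ in the exponent is accurate.
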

\begin{proof}
	It is clear that \eqref{e.var_rho} implies \eqref{e.var_rho_consequence}. Thus, we only prove \eqref{e.var_rho}. To do that, let $\phi^*$ be a GFF on the $3$-forms at inverse-temperature $\beta$ and independent of $\rho$, and $\rho^*=\d^* \phi^*$ (we define $\rho^*=\mathring \d^* \rho^*$ if we are working with $0$-boundary condition). We note that $\rho + \rho^*$ has the law of a white noise on the $2$-forms at inverse temperature $\beta$. Furthermore, as $\Lambda_j\nearrow \Z^4$, we have that both $\rho$ and $\rho^*$ converge in law \footnote{and in all their moments as they are Gaussian random variables} to $\rho_\infty$ and $\rho^*_\infty$.
In this case, we have that
	\begin{align*}
	\var(\rho_\infty(f))= \frac{1}{\beta}\langle \proj{1}{2} \1_f , \proj{1}{2} \1_f\rangle \text{ and } 
	\var(\rho^*_\infty(f))= \frac{1}{\beta}\langle \proj{3}{2} \1_f , \proj{3}{2} \1_f\rangle.
	\end{align*}
	Where the projections are defined in the $2$-forms of $\Z^4$ as in Remark \ref{r.Odecomp}. By using the symmetry of $\Z^4$ between the hyper-cubes and the vertices (or more precisely between itself and its Hodge dual) we have that
	\begin{align*}
	\var(\rho_\infty(f))= \var(\rho^*_\infty(f)).
	\end{align*}
	We conclude using that their sum is equal to $\frac{1}{\beta}$.	
\end{proof}

We can now prove Proposition \ref{p.tilde_M}.

\begin{proof}[Proof of Proposition \ref{p.tilde_M}]
	We note, using \eqref{e.variance_IG_bound},  that for any $\beta >10$
	\begin{align*}
 \tilde M(\beta,f)&= \E^{Vil}_\beta\left[ \var^{IG}\left (\frac{d\theta(f)}{2\pi},(2\pi)^2\beta \right )\right]\\
&\geq \frac{1}{16} e^{-2\pi \beta \epsilon } \P^{Vil}_\beta\left ( \d \theta(f) \mod 2\pi \in(-\pi, -\pi+\epsilon) \cup (\pi-\epsilon , \pi \right )),\\
&\geq \frac{1}{8} e^{-2\pi \beta \epsilon } \P^{GSW}_\beta\left ( \varrho(f) \in(\pi-\epsilon , \pi \right )),
\end{align*}
where in the last equation we used Lemma \ref{l.lower_bound_dtheta}. Using \eqref{e.var_rho_consequence} and taking $\epsilon= \beta^{-1}$, we have that
\begin{align*}
\tilde M(\beta,f) \geq \frac{e^{-2\pi}}{8\sqrt{\beta}} e^{-\beta \pi^2(1+o_{L_0}(1))}.
\end{align*}
We conclude from here by taking $\beta$ big enough and absorbing the constant in the $\delta>0$ term.
\end{proof}

\subsection{The free-energy of the Coulomb gas.} In this section, we obtain a lower bound for the partition function of a Coulomb gas. More precisely,
\begin{proposition}\label{p.free_enery_Coulomb}
	For any $\delta>0$, there exists $\beta_0<\infty$ such that for any $\beta\geq \beta_0$
	\begin{align*}
	f_j^{Coul}(\beta)= \frac{1}{4(2j)^4}\ln(Z^{Coul}_{\beta,\Lambda_j})&\geq \frac{3}{8}\int_{\beta}^{\infty} e^{-\pi^2\tilde \beta(1+\delta)} d\tilde \beta +O(j^{-1})\,,
	\end{align*}
	where $\Lambda_j:=[-j,j]^4\cap \Z^4$. This result is true for both free and zero boundary condition. Furthermore,
	\begin{align} \label{e.d_free_energy_Coulomb}
	 \frac{1}{4(2j^4)}\frac{d}{d\beta}f_j^{Coul}(\beta)\leq - \frac{3}{8} e^{-\beta \pi^2(1+\delta)}(1+O(j^{-1}))
	\end{align}
\end{proposition}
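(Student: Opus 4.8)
The plan is to reduce the statement to a lower bound on the Coulomb energy. Differentiating the partition function, for $\Lambda_j=[-j,j]^4\cap\Z^4$ with either boundary condition,
\[
\frac{d}{d\beta}\ln Z^{Coul}_{\beta,\Lambda_j}=-\frac{(2\pi)^2}{2}\,\E^{Coul}_\beta\bigl[\<{q,(-\Delta)^{-1}q}\bigr]
\]
(with $\mathring\Delta$ in place of $\Delta$ in the zero-boundary case). This is $\le 0$, and moreover $Z^{Coul}_{\beta,\Lambda_j}\to 1$ as $\beta\to\infty$ since $\<{q,(-\Delta)^{-1}q}$ is positive definite in $q$ and the gas concentrates on $q\equiv 0$; hence $f_j^{Coul}(\beta)=-\int_\beta^\infty\frac{d}{d\tilde\beta}f_j^{Coul}(\tilde\beta)\,d\tilde\beta$, and both \eqref{e.d_free_energy_Coulomb} and the integrated bound will follow once we show $\E^{Coul}_\beta[\<{q,(-\Delta)^{-1}q}]\ge c\,(2j)^4\,e^{-\pi^2\beta(1+\delta)}(1+O(j^{-1}))$ for a suitable constant $c>0$.

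To get this I would diagonalise $-\Delta$ on the (finite-dimensional) space of co-closed real $3$-forms of $\Lambda_j$, say $-\Delta v_\alpha=\mu_\alpha v_\alpha$ with $\{v_\alpha\}$ orthonormal and $\mu_\alpha>0$. The Coulomb gas is supported on $\{\d q=0\}$ and $(-\Delta)^{-1}$ preserves that subspace, so $\<{q,(-\Delta)^{-1}q}=\sum_\alpha\mu_\alpha^{-1}\<{q,v_\alpha}^2$ a.s.; applying Proposition \ref{p.lower_bound_variance_q} to each $h=v_\alpha$ (with the ringed operators in the zero-boundary case) and summing gives
\[
\E^{Coul}_\beta\bigl[\<{q,(-\Delta)^{-1}q}\bigr]\ \ge\ \sum_{f\in C^2(\Lambda_j)}\tilde M(\beta,f)\,\sum_\alpha\mu_\alpha^{-1}\bigl(\d^* v_\alpha(f)\bigr)^2 .
\]
Since $\bigl(\d^* v_\alpha(f)\bigr)^2=\<{v_\alpha,\d\1_f}^2$ and $\d\1_f$ is again a co-closed $3$-form, hence in $\mathrm{span}\{v_\alpha\}$, the inner sum equals $\<{\d\1_f,(-\Delta)^{-1}\d\1_f}=\<{\1_f,\proj{3}{2}\1_f}=\|\proj{3}{2}\1_f\|^2$ by Lemma \ref{l.proj}, so $\E^{Coul}_\beta[\<{q,(-\Delta)^{-1}q}]\ge\sum_{f\in C^2(\Lambda_j)}\tilde M(\beta,f)\,\|\proj{3}{2}\1_f\|^2$. (Equivalently one may avoid the eigenbasis: along the coupling of Proposition \ref{p.decouplingF} one has $\<{q,(-\Delta)^{-1}q}=\|\proj{3}{2}m\|^2$, and running the law-of-total-variance computation of Proposition \ref{p.lower_bound_variance_q} component-wise against an orthonormal basis of $\Omega^{3\shortrightarrow 2}$ yields the same bound.)

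It then remains to feed in the two per-cell inputs. For a $2$-cell $f$ at distance $\ge L_0$ from $\p\Lambda_j$, Proposition \ref{p.tilde_M} gives $\tilde M(\beta,f)\ge e^{-\pi^2\beta(1+\delta)}$ (for $\beta\ge\beta_0(\delta)$ and $j\ge L_0(\beta,\delta)$), and the lemma preceding the proof of Proposition \ref{p.tilde_M} gives $\|\proj{1}{2}\1_f\|^2=\beta\var(\varrho(f))=\tfrac12+o_{L_0}(1)$, whence $\|\proj{3}{2}\1_f\|^2=\|\1_f\|^2-\|\proj{1}{2}\1_f\|^2=\tfrac12+o_{L_0}(1)$ by the orthogonal splitting $\Omega^2=\Omega^{1\shortrightarrow 2}\oplus\Omega^{3\shortrightarrow 2}$ of Lemma \ref{l.proj} and $\|\1_f\|=1$. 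Since $C^2(\Lambda_j)$ has $\sim 6(2j)^4$ cells, of which only $O_{L_0}(j^3)$ lie within distance $L_0$ of the boundary, and all summands above are nonnegative, enlarging $L_0$ so that the $o_{L_0}(1)$ is harmless gives $\E^{Coul}_\beta[\<{q,(-\Delta)^{-1}q}]\ge 3(2j)^4\,e^{-\pi^2\beta(1+\delta)}(1+O(j^{-1}))$ — in fact the argument produces the constant $\tfrac{3(2\pi)^2}{8}=\tfrac{3\pi^2}{2}$, comfortably above the stated $\tfrac38$. Plugging into the first display yields \eqref{e.d_free_energy_Coulomb}, and integrating from $\beta$ to $\infty$ — on the range of $\tilde\beta$ with $L_0(\tilde\beta,\delta)>j$ one simply drops the nonpositive contribution, which is negligible as $L_0(\tilde\beta,\delta)\to\infty$ — gives $f_j^{Coul}(\beta)\ge\tfrac38\int_\beta^\infty e^{-\pi^2\tilde\beta(1+\delta)}\,d\tilde\beta+O(j^{-1})$.

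The substantive part is the middle step, where the joint coupling $(\theta,m)$ and the sharp variance estimate of Proposition \ref{p.tilde_M} (the origin of the $e^{-\pi^2\beta}$ rate) are genuinely used, and where the Hodge self-duality of $\Z^4$ — through the lemma just before Proposition \ref{p.tilde_M} — is what pins each weight $\|\proj{3}{2}\1_f\|^2$ down to $\tfrac12$ rather than a smaller number. The differentiation identity, the boundary-cell count, and the back-integration with its $\tilde\beta$-dependent cutoff are routine bookkeeping.
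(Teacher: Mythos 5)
Your proposal is correct and follows essentially the same route as the paper: differentiate $\ln Z^{Coul}_{\beta,\Lambda_j}$ in $\beta$, lower-bound the expected Coulomb energy by $\sum_{f}\tilde M(\beta,f)\,\|\proj{3}{2}\1_f\|^2$ via Proposition \ref{p.lower_bound_variance_q}, invoke Proposition \ref{p.tilde_M} on bulk cells, and integrate back using $f_j^{Coul}(\infty)=0$. The only differences are cosmetic --- you expand $\langle q,(-\Delta)^{-1}q\rangle$ in an eigenbasis and evaluate $\sum_f\|\proj{3}{2}\1_f\|^2$ cell-by-cell via the Hodge self-duality lemma, whereas the paper tests $\d^*\Delta^{-1}q$ against each $\1_f$ and uses the global dimension count of Remark \ref{r.counting_dimensions} --- and you correctly restore the factor $(2\pi)^2$ that \eqref{e.derivate_free_energy} drops, which only improves the constant.
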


To prove Proposition \ref{p.free_enery_Coulomb}, it will be useful to recall that
\begin{align}\label{e.derivate_free_energy}
\frac{d}{d \beta }\ln(Z^{Coul}_{\beta,\Lambda_j}) = -\frac{1}{2}\E^{Coul}_{\beta}\left[\langle q, \Delta^{-1} q \rangle \right]. 
\end{align}
Having this in mind, it is clear that the following lemma is useful.
\begin{lemma}Let $q$ be a Coulomb gas at inverse-temperature $\beta$ and $\phi$ be a Gaussian free field on the $3$-forms at inverse-temperature $1$
	\begin{equation}\label{e.lb_Coulomb}
	\E^{Coul}_{\beta}\left[\langle q, (-\Delta^{-1}) q \rangle \right] \geq e^{-\beta \pi^2(1+\delta)}\E^{GFF}_{1}\left[\langle \d^* \phi , \d^* \phi \rangle \right] (1-O(j^{-1})).
	\end{equation}
\end{lemma}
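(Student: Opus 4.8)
The plan is to pair the Coulomb gas $q$ against an auxiliary Gaussian free field and then invoke Proposition~\ref{p.lower_bound_variance_q} together with Proposition~\ref{p.tilde_M}. Let $\phi$ be a GFF on the $3$-forms of $\Lambda_j$ at inverse temperature $1$, with the same boundary condition as $q$ and chosen independent of $q$; recall its covariance operator is $(-\Delta)^{-1}$ (resp.\ $(-\mathring\Delta)^{-1}$ for the zero-boundary case). Conditioning on $q$ gives $\E\big[\langle q,\phi\rangle^2\mid q\big]=\langle q,(-\Delta^{-1})q\rangle$, hence
\[
\E^{Coul}_{\beta}\big[\langle q,(-\Delta^{-1})q\rangle\big]=\E\big[\langle q,\phi\rangle^2\big].
\]
Conditioning instead on $\phi$, the $3$-form $h=\phi$ is deterministic and independent of $q$, so Proposition~\ref{p.lower_bound_variance_q} applies and yields $\E\big[\langle q,\phi\rangle^2\mid\phi\big]\ge\langle\tilde M(\beta,\cdot)\,\d^*\phi,\d^*\phi\rangle=\sum_{f\in C^2(\Lambda_j)}\tilde M(\beta,f)\,(\d^*\phi(f))^2$. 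Taking the expectation over $\phi$,
\[
\E^{Coul}_{\beta}\big[\langle q,(-\Delta^{-1})q\rangle\big]\ \ge\ \sum_{f\in C^2(\Lambda_j)}\tilde M(\beta,f)\,\var^{GFF}_1\!\big(\d^*\phi(f)\big).
\]

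Next I would split this sum according to $d(f,\p\Lambda_j)$. By Proposition~\ref{p.tilde_M}, provided $\beta\ge\beta_0(\delta)$ and $j\ge L_0$, one has $\tilde M(\beta,f)\ge e^{-\pi^2\beta(1+\delta)}$ whenever $d(f,\p\Lambda_j)\ge L_0$, while $\tilde M(\beta,f)\ge 0$ otherwise. Hence
\[
\E^{Coul}_{\beta}\big[\langle q,(-\Delta^{-1})q\rangle\big]\ \ge\ e^{-\pi^2\beta(1+\delta)}\sum_{f:\,d(f,\p\Lambda_j)\ge L_0}\var^{GFF}_1\!\big(\d^*\phi(f)\big),
\]
and since $\sum_{f\in C^2(\Lambda_j)}\var^{GFF}_1(\d^*\phi(f))=\E^{GFF}_1\big[\langle\d^*\phi,\d^*\phi\rangle\big]$, the proof reduces to showing that the boundary collar $\{f:\,d(f,\p\Lambda_j)<L_0\}$ carries only an $O(j^{-1})$-fraction of this total.

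To this end I would reuse the white-noise decomposition from the lemma preceding Proposition~\ref{p.tilde_M}: if $\varphi$ is a GFF on the $1$-forms at inverse temperature $1$ taken independent of $\phi$, then $\d\varphi$ and $\d^*\phi$ are independent and $\d\varphi+\d^*\phi$ has the law of a white noise on the $2$-forms at inverse temperature $1$; consequently $\var^{GFF}_1(\d^*\phi(f))=1-\var(\d\varphi(f))\in[0,1]$ for every $2$-cell $f$. Since $L_0$ depends only on $\beta$ and $\delta$, the number of $2$-cells of $\Lambda_j$ within distance $L_0$ of $\p\Lambda_j$ is $O(L_0\,j^3)=O(j^3)$, so the collar contributes at most $O(j^3)$ to the sum, whereas $\E^{GFF}_1[\langle\d^*\phi,\d^*\phi\rangle]=\dim\Omega^{2\rightarrow3}(\Lambda_j)$ is of order $j^4$ by Remark~\ref{r.counting_dimensions}. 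This gives the factor $1-O(j^{-1})$ and hence~\eqref{e.lb_Coulomb}; the zero-boundary case is identical, with $\d^*,\Delta$ replaced by $\mathring\d^*,\mathring\Delta$ throughout.

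The one genuinely delicate point is the last paragraph, namely controlling the boundary cells: it requires both the uniform bound $\var^{GFF}_1(\d^*\phi(f))\le1$ and the fact that the total $\E^{GFF}_1[\langle\d^*\phi,\d^*\phi\rangle]$ grows proportionally to the volume of $\Lambda_j$, so that a fixed-width boundary layer is asymptotically negligible. Everything else is a formal manipulation of the coupling in Proposition~\ref{p.decouplingF} and of the two already-proved estimates, Propositions~\ref{p.lower_bound_variance_q} and~\ref{p.tilde_M}.
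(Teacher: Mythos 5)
Your proof is correct, but it takes a genuinely different route from the paper's. The paper expands $\langle q,(-\Delta)^{-1}q\rangle=\sum_{f\in C^2(\Lambda_j)}\langle \d^*\Delta^{-1}q,\1_f\rangle^2$ as in~\eqref{e.energy_q} and applies Proposition~\ref{p.lower_bound_variance_q} separately to each test form $h=\d\Delta^{-1}\1_f$; this forces an extra analytic step (the estimate~\eqref{e.1/2}) showing that for $f$ far from $\p\Lambda_j$ the projection $\proj{3}{2}\1_f$ keeps at least half of its $\ell^2$-mass on cells at distance $\geq L_0$ from the boundary, which in turn rests on the pointwise decay of the kernel of $\d^*\Delta^{-1}\d$. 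You instead exploit the auxiliary field $\phi$ already present in the statement: the identity $\E\bigl[\langle q,\phi\rangle^2\bigr]=\E^{Coul}_\beta\bigl[\langle q,(-\Delta^{-1})q\rangle\bigr]$ lets you apply Proposition~\ref{p.lower_bound_variance_q} a single time, conditionally on $\phi$ (legitimate, since $q$ and $\phi$ are independent and $\tilde M(\beta,\cdot)$ is deterministic), reducing the whole lemma to the sum $\sum_f\tilde M(\beta,f)\,\var^{GFF}_1(\d^*\phi(f))$. The boundary collar is then handled by the trivial contraction bound $\var^{GFF}_1(\d^*\phi(f))=\langle\proj{3}{2}\1_f,\proj{3}{2}\1_f\rangle\leq 1$ (your white-noise decomposition gives the same thing) together with the trace count of Remark~\ref{r.counting_dimensions}, so no kernel decay estimate is needed anywhere. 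What your approach buys is a cleaner and slightly more robust argument (it sidesteps~\eqref{e.1/2} and the attendant factor-$\tfrac12$ bookkeeping, which the paper in fact absorbs somewhat loosely); what the paper's cell-by-cell version buys is the local statement~\eqref{e.want_to_prove_variance_q}, i.e.\ a lower bound on the variance of each individual observable $\langle\d^*\Delta^{-1}q,\1_f\rangle$, which is more information than the traced inequality~\eqref{e.lb_Coulomb} alone. Both arguments rest on exactly the same two inputs, Propositions~\ref{p.lower_bound_variance_q} and~\ref{p.tilde_M}.
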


\begin{proof}
	Note that
	\begin{align}\label{e.energy_q}
	\langle q, (-\Delta^{-1}) q \rangle  = \langle \d^* (\Delta^{-1}) q, \d^* (\Delta^{-1}q) = \sum_{f \in C^2(\Lambda_j)} \langle \d^*(\Delta^{-1}) q, f \rangle^2.
	\end{align}
	
	We will now show that there exists \avelio{$\tilde L_0\geq L_0$ (where $L_0$ is defined in Proposition \ref{p.tilde_M})}  such that for any $f\in C^2(\Lambda_n)$ \avelio{whose support is at distance at} least $\tilde L_0$ from the boundary
	\begin{align}\label{e.want_to_prove_variance_q}
	\E\left[\langle \d^*(\Delta^{-1}) q, 1_f \rangle^2 \right]  &\geq e^{-\beta\pi^2(1+\delta) }\langle \proj{3}{2} \1_f, \proj{3}{2} \1_f \rangle \\
	&=  e^{-\beta\pi^2(1+\delta) } \E^{GFF}_{1}\left[\langle \d^* \phi  , 1_f \rangle^2 \right]. 	
	\end{align}
	To prove \eqref{e.want_to_prove_variance_q}, we use Proposition \ref{p.lower_bound_variance_q} to see that
	\begin{align*}
	\E\left[\langle \d^*(\Delta^{-1}) q, \1_f \rangle^2 \right]  &\geq \langle \tilde M (\beta,\cdot)\d^*\d(\Delta^{-1}) \1_f, \d^* \d (\Delta^{-1}) \1_f \rangle\\
	&\geq e^{-\beta \pi^2(1+\delta)}\langle \1_{d(\cdot,\partial \Lambda_j)\geq L_0} \proj{3}{2} \1_f, \proj{3}{2} \1_f \rangle.
	\end{align*}
	We just need to show that for all $f$ that are at distance greater than \avelio{$\tilde L_0$} from the boundary 
	\begin{align}\label{e.1/2}
	\langle \1_{d(\cdot,\partial\Lambda_j)\geq L_0} \proj{3}{2} \1_f, \proj{3}{2} \1_f \rangle \geq \frac{1}{2}\langle \proj{3}{2} \1_f, \proj{3}{2} \1_f \rangle.
	\end{align}
	This follows from the fact that due to the explicit characteristic of $\Delta^{-1}$ we have that 
	\begin{align*}
	\d^*\Delta^{-1} \d f(f_1,f_2)\leq  \frac{C}{\|f_1-f_2\|^4}+ O(\|f_1-f_2\|^5).
	\end{align*}
	
	Now, we comeback to \ref{e.energy_q} and note that
	\begin{align*}
	\E\left[\langle q, (-\Delta)^{-1}q \rangle \right] &\geq e^{-\beta \pi^2(1+\delta)} \sum_{\substack{\substack{f \in C^{2}(\Lambda_j)\\ d(f,\partial \Lambda_j) > \tilde L_0}}} \E^{GFF}_1(\langle \d^* \phi, \1_f \rangle)\\
	&= e^{-\beta \pi^2(1+\delta)}\bigg ( \E^{GFF}_1\left[\langle \d^*\phi, \d^* \phi \rangle \right]-   \sum_{\substack{f \in C^{2}(\Lambda_j)\\ d(f,\partial \Lambda_j)\leq \tilde L_0}}\E^{GFF}_1\left[\langle\d^* \phi, \1_f \rangle ^2 \right]\bigg ).
	\end{align*}
	To control the term with the sum in the last equation, we note that both for free and $0$ boundary condition, we have that by Proposition \ref{p.basic_calculus} and Remark \ref{r.counting_dimensions}
	\begin{align*}
	\E_1^{GFF}[\langle \d^* \phi, \d^* \phi \rangle]= 3 (2j)^4 + o(j^3).
	\end{align*}
	Furthermore
	\begin{align*}
	\sum_{\substack{f \in C^{2}(\Lambda_j)\\ d(f,\partial \Lambda_j)\leq \tilde L_0}}\E^{GFF}_1\left[\langle\d^* \phi, \1_f \rangle ^2 \right]& = \sum_{\substack{f \in C^{2}(\Lambda_j)\\ d(f,\partial \Lambda_j)\leq \tilde L_0}} \langle \proj{3}{2}\1_f, \1_f\rangle\\
	&\leq \sum_{\substack{f \in C^{2}(\Lambda_j)\\ d(f,\partial \Lambda_j)\leq  \tilde L_0}} 1 \\
	&\leq 12  \tilde L_0 j^3,
	\end{align*}
	from where we finally conclude. 
\end{proof}

Finally, we prove Proposition \ref{p.free_enery_Coulomb}.
\begin{proof}[Proof of Proposition \ref{p.free_enery_Coulomb}]
We start by using \eqref{e.derivate_free_energy} and \eqref{e.lb_Coulomb} to see that	\begin{align*}
	\frac{d}{d\beta}\ln(Z^{Coul}_{\Lambda_n})\leq - \frac{1}{2}e^{-\pi^2(1+\delta)}\E^{GFF}_{1}\left[\langle \d^* \phi , \d^* \phi \rangle \right] (1-o(n^{-1})).
	\end{align*}
	Recalling that 
	\begin{align*}
	\E^{GFF}_{1}\left[\langle \d^* \phi , \d^* \phi \rangle \right]= 3(2n)^4 + O(n^3),
	\end{align*}
	and using the fact that $f_n^{Coul}(\infty)=0$, we conclude.
\end{proof}

\subsection{Free-energy of Villain model.}
We now prove Theorem \ref{t.d_free_energy}.
\begin{proof}[Proof of Theorem \ref{t.d_free_energy}]We use Corollary \ref{l.decoupling_partition_function}, to see that
	\begin{align*}
	\frac{d}{d \beta} \log(Z^{Vil}_{\beta, \Lambda_j})&= \frac{d}{d \beta}(\log( Z^{GSW}_{\beta,\Lambda_j}) + \log(Z^{Coul}_{\beta,\Lambda_j}))\\
	&= -\frac{1}{2\beta} (3(2j)^4+O(j^3))+\frac{d}{d\beta}\log(Z^{Coul}_{\beta,\Lambda_j})
	\end{align*}
	We conclude using \eqref{e.d_free_energy_Coulomb}.
\end{proof}

\section{Concluding remarks}

We conclude with two remarks. 

\begin{remark}\label{r.SouravUB}
In \cite[Lemma 7.12]{Sourav}, Chatterjee obtains an upper bound which happens to be asymptotically sharp for some natural regime in $(\beta,\gamma)$ as follows: one first conditions on the values of the $1$-form $\theta$ on all edges $e$ except the ones along $\gamma$ and one considers the worse-case scenario for $\Eb{W_\gamma \md \{\theta_e\}_{e \notin \gamma}}$. As pointed out to us by Malin Palö Forsström, this technique also works in the case where $G=U(1)$. Analyzing the corresponding worse case scenario, this gives the following upper bound for Wilson loops
\begin{align*}\label{}
|\EFK{\Lambda,\beta}{}{W_\gamma}| \leq \exp\left (-\left (\frac{1} {12 \beta} +o(1/\beta)\right )|\gamma|\right )\,.
\end{align*}
One can check that $\tfrac 1 {12}< C_{GFF}$ (which was defined in~\eqref{e.cGFF}). Interestingly this shows that this natural technique captures (for some suitable regime of $\beta$ and $|\gamma|$) the correct asymptotics for $\<{W_\gamma}$ when the gauge group is discrete (see also \cite{malin,cao}) while it never provides the correct decay for the continuous Gauge group $U(1)$. 
\end{remark}

\begin{remark}\label{}
The phase transition for $U(1)$ lattice gauge theory between a deconfining and a confining phase in \cite{FSrestoration} shares some similarities with the BKT transition proved in \cite{FS}. Fröhlich-Spencer conjectured in \cite{FS1983} that the $2d$ Villain model should behave at large scale like $e^{ i \beta_{eff}^{-1} GFF}$. The present work suggests a similar conjecture for $U(1)$ lattice gauge theory with Villain interaction. As we have seen in Section \ref{s.decoupling}, one should  state the corresponding conjecture for the $2$-form $\d \theta$ rather than for the $1$-form $\theta$. We then expect that  $\d \theta$  should behave at large scales like $e^{i \beta_{eff}^{-1} \varrho}$ where $\varrho$ is the gradient spin-wave (at $\beta\equiv 1$) defined in Definition \ref{d.SW}. As discussed in Subsection \ref{ss.links}, note that this conjecture is proved for smooth enough observables (which do not include Wilson observables) in \cite{driver1987convergence}. Inspired by our earlier work \cite{GS1} and thanks to the analogy between the BKT and the confining/deconfining transition, we should then expect a certain phase transition to happen for the statistical reconstruction of the gradient spin-wave $\varrho$ given $e^{i \beta_{eff}^{-1} \varrho}$. This will be the subject of the work in progress \cite{GS4}.
\end{remark}

%


\bibliographystyle{alpha}
\bibliography{biblio-SWYM}

\end{document}